\numberwithin{equation}{section}
\theoremstyle{theorem}
\newtheorem{theorem}{Theorem}[section]
\newtheorem{proposition}[theorem]{Proposition}
\newtheorem{lemma}[theorem]{Lemma}
\newtheorem{corollary}[theorem]{Corollary}
\newtheorem{question}[theorem]{Question}
\theoremstyle{definition}
\newtheorem{definition}[theorem]{Definition}
\newtheorem{example}[theorem]{Example}
\theoremstyle{remark}
\newcommand{\N}{\mathbb{N}}
\newcommand{\Z}{\mathbb{Z}}
\newcommand{\R}{\mathbb{R}}
\newcommand{\Tr}{\operatorname{Tr}}
\newcommand{\vol}{\operatorname{vol}}
\newcommand{\Crit}{\operatorname{Crit}}
\newcommand{\Gau}{\operatorname{Gau}}
\title{Morse inequalities for noncompact manifolds}
\author[T. Kato]{Tsuyoshi Kato}
\address{Department of Mathematics, Kyoto University, Kyoto 606-8502, Japan}
\email{tkato@math.kyoto-u.ac.jp}
\author[D. Kishimoto]{Daisuke Kishimoto}
\address{Faculty of Mathematics, Kyushu University, Fukuoka 819-0395, Japan}
\email{kishimoto@math.kyushu-u.ac.jp}
\author[M. Tsutaya]{Mitsunobu Tsutaya}
\address{Faculty of Mathematics, Kyushu University, Fukuoka 819-0395, Japan}
\email{tsutaya@math.kyushu-u.ac.jp}
\date{\today}
\subjclass[2010]{57R19, 58E05}
\keywords{Morse inequality, noncompact manifold, $L^2$-Betti number, piecewise trace, Witten deformation}
\begin{document}

\maketitle

\begin{abstract}
  We establish Morse inequalities for a noncompact manifold with a cocompact and properly discontinuous action of a discrete group, where Morse functions are not necessarily invariant under the group action. The inequalities are given in terms of the $L^2$-Betti numbers and functions on the acting group which describe rough configurations of critical points of a Morse function.
\end{abstract}


\section{Introduction}\label{Introduction}

Morse inequalities relate the number of critical points of a Morse function on a compact manifold to its Betti numbers. Here, compactness of a manifold is essential as it guarantees finiteness of the number of critical points and the Betti numbers. Then it is challenging to generalize Morse inequalities to noncompact manifolds. There should have been attempts for it, but there is no result except for the very special cases that critical points are finitely many \cite{DY} and have strong symmetry \cite{NS}.

Throughout the paper, let $M$ be a connected $n$-dimensional manifold without boundary, possibly noncompact, equipped with a cocompact and properly discontinuous action of a discrete group $G$ such that the orbit manifold $M/G$ is oriented. Namely, we will consider a Galois covering $G\to M\to M/G$ such that $M$ is connected and $M/G$ is a closed oriented $n$-dimensional manifold. We fix any $G$-invariant metric on $M$. If $G$ is amenable, then a manifold $M$ is a typical example of a manifold having bounded geometry and a regular exhaustion, for which the index theory is developed in \cite{R1,R2}. The purpose of this paper is to establish Morse inequalities for a certain Morse function on $M$ which is essentially irrelevant to the action of $G$.

Roe \cite{R1} crudely classified generalizations of the index theorem to noncompact manifolds into three types. In particular, for the type II theorems, including Atiyah's $\Gamma$-index theorem \cite{A} and Roe's index theorem \cite{R1,R2}, the index is interpreted through some kind of averaging or renormalization procedure. Our approach takes such procedure. To generalize Morse inequalities to noncompact manifolds, we need to replace Betti numbers by other invariants because the Betti numbers of a noncompcat manifold may not be defined. For a manifold $M$, there is a nice variant of Betti numbers, \emph{$L^2$-Betti numbers}, and we employ them for our purpose. As in \cite[Example 1.37]{L}, $L^2$-Betti numbers may be thought of as a kind of average of Betti numbers, so they should match our approach. On the other hand, Gromov \cite{G} proved that vanishing of $L^2$-Betti numbers is a quasi-isometry invariant, and as in \cite{P}, for a large class of groups, $L^2$-Betti numbers themselves are proved to be quasi-isometry invariants. Here, quasi-isometry invariants are the object of study in large scale geometry. However, we only use some results in large scale geometry, and do not work in it because quasi-isometries identify a compact set with a single point so that the number of critical points does not make sense.

Let us introduce Morse functions that we are going to consider. It is shown in \cite{H} that every connected noncompact manifold admits a function without critical points. To exclude such a trivial situation, we assume the following boundedness of Morse functions (cf. \cite{KKT}).

\begin{definition}
  A smooth function $f\colon M\to\R$ is \emph{bounded} if $\nabla^if$ is bounded for any $0\le i\le\max\{3,\lceil\frac{n}{2}\rceil\}$.
\end{definition}

Actually, boundedness of the gradient vector field $\nabla f$ is essential, and that of $f$ and its higher derivatives may be a technical condition. We note that the boundedness of a function is independent of the choice of a $G$-invariant metric on $M$ because $G$-invariant metrics on $M$ are mutually equivalent. By \eqref{c_k} below, we describe a rough configuration of critical points of a Morse function on $M$ by a function on $G$, which may be thought of as counting of critical points. In order to make this function controllable, we assume the following uniformness of Morse functions. We fix a $G$-invariant triangulation of $M$. Then as in Section \ref{Piecewise trace}, we can construct a fundamental domain $K$ such that the closure of $K$ is a finite subcomplex of $M$ and it satisfies
\begin{equation}
  \label{tiling}
  M=\coprod_{g\in G}gK.
\end{equation}
Here, we note that the closure of $K$ is a fundamental domain in the sense of \cite{KKT}.

\begin{definition}
  A Morse function $f\colon M\to\R$ is \emph{uniform} if there is $\epsilon>0$ such that any two critical points of $f$ are at least $2\epsilon$ distant.
\end{definition}

In Sectioin \ref{Morse inequalities}, we define a \emph{strongly uniform} Morse function on $M$ as a uniform Morse function on $M$ satisfying additional mild conditions on critical points. Let $f\colon M\to\R$ be a Morse function, and let $\Crit_k(f)$ denote the set of critical points of $f$ with index $k$. Define a function
\begin{equation}
  \label{c_k}
  c_k\colon G\to\R,\quad g\mapsto|\Crit_k(f)\cap gK|.
\end{equation}
Then this is a function describing a rough configuration of critical points of $f$ with index $k$, mentioned above. This way of counting of discrete points on $M$ was discovered in \cite{KKT}. Observe that $c_k$ is a bounded function for each $k\ge 0$ whenever $f$ is uniform, because the closure of a fundamental domain $K$ is compact.

We set notation. Let $b_k^{(2)}$ denote the $k$-th $L^2$-Betti number of $M$. Let $\ell^\infty(G)$ denote the Banach space of bounded functions on $G$. Then $G$ acts on $\ell^\infty(G)$ by $(g\cdot\phi)(x)=\phi(xg)$ for $g,x\in G$ and $\phi\in\ell^\infty(G)$. Let $\mathcal{I}$ denote the subspace of $\ell^\infty(G)$ generated by $\phi-g\cdot\phi$ for $g\in G$ and $\phi\in\ell^\infty(G)$, and let $\bar{\mathcal{I}}$ denote the closure of $\mathcal{I}$ by the weak topology. Taking modulo $\bar{\mathcal{I}}$ may be thought of as averaging process mentioned above. For $\phi_1,\phi_2\in\ell^\infty(G)$, we write
\begin{itemize}
  \item $\phi_1\ge\phi_2\mod\bar{\mathcal{I}}$ if there is $\phi_3\in\bar{\mathcal{I}}$ such that the pointwise inequality $\phi_1\ge\phi_2+\phi_3$ holds, and

  \item $\phi_1\approx\phi_2\mod\bar{\mathcal{I}}$ if $\phi_1\ge\phi_2\mod\bar{\mathcal{I}}$ and $\phi_2\ge\phi_1\mod\bar{\mathcal{I}}$.
\end{itemize}
The first relation is a preorder on $\ell^\infty(G)$, not necessarily a partial order, so that we need the second relation. The quotient $\ell^\infty(G)/\mathcal{I}$ is isomorphic to the $0$-th uniformly finite homology of $M$ defined by Block and Weinberger \cite{BW} (see also \cite{BNW}), and is related with finite propagation operators that play an important role in this paper \cite{KKT1,KKT2,KKT3}. Let $\mathbbm{1}\in\ell^{\infty}(G)$ denote the constant function on $G$ with values $1$.

Now we state the main theorem.

\begin{theorem}
  \label{main}
  For a strongly uniform bounded Morse function $f\colon M\to\R$, we have
  \[
  c_k-c_{k-1}+\cdots+(-1)^kc_0\ge(b_k^{(2)}-b_{k-1}^{(2)}+\cdots+(-1)^kb_0^{(2)})\mathbbm{1}\mod\bar{\mathcal{I}}
  \]
  for $k=0,1,\ldots,n-1$, and
  \[
    c_n-c_{n-1}+\cdots+(-1)^nc_0\approx(-1)^n\chi(M/G)\mathbbm{1}\mod\bar{\mathcal{I}}.
  \]
\end{theorem}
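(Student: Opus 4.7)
The plan is to prove Theorem \ref{main} by applying an algebraic Morse inequality to the Witten-deformed $L^2$ de Rham complex and translating the dimensions on both sides into elements of $\ell^\infty(G)/\bar{\mathcal{I}}$ via the piecewise trace constructed in Section \ref{Piecewise trace}. The two kinds of dimensions to be matched are the ``number of generators'' in degree $k$, which I expect to be $c_k$ for a large Witten parameter, and the ``cohomology dimension'' in degree $k$, which should be $b_k^{(2)}\mathbbm{1}$.

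For $t>0$, set $d_t:=e^{-tf}de^{tf}$ on the $L^2$ de Rham complex of $M$ and form the Witten Laplacian $\Delta_t^k=d_td_t^*+d_t^*d_t$. Boundedness of $f$ to the order required by the definition makes $d_t$ well-defined on the domain of $d$ and chain-homotopic to it, so reduced $L^2$-cohomology is preserved. Combining strong uniformness of $f$, the $G$-invariance of the metric, and bounded-geometry estimates in the spirit of classical Witten--Hodge arguments on normal coverings, I expect that for all $t\ge t_0$ the spectrum of $\Delta_t^k$ has a gap $(\delta,1)$; the spectral projection $E_k(t)$ onto $[0,\delta]$ can then be taken to have finite propagation up to exponentially small errors, so its piecewise trace $\Tr_K E_k(t)$ lies in $\ell^\infty(G)$. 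If $P_k^{(2)}$ denotes the projection onto $L^2$-harmonic $k$-forms, then the identification of averaged piecewise trace with the von Neumann trace gives $\Tr_K P_k^{(2)}\approx b_k^{(2)}\mathbbm{1}\mod\bar{\mathcal{I}}$, which is the right-hand side of the Morse inequality.

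The analytic heart of the argument is the asymptotic equality $\Tr_K E_k(t)\approx c_k\mod\bar{\mathcal{I}}$ for all sufficiently large $t$. The upper bound comes from the harmonic-oscillator model near each critical point of index $k$, which contributes exactly one exponentially localized low-lying eigenvalue in degree $k$; uniform boundedness of $f$ and of its derivatives makes these estimates uniform along the $G$-orbit. The lower bound comes from producing, for each critical point $p$ of index $k$, an explicit model eigenform supported in the ball of radius $\epsilon$ about $p$; by uniformness these supports are pairwise disjoint, the forms are orthonormal in $L^2$, and each lies within $O(e^{-ct})$ of $\operatorname{range} E_k(t)$. Once $\Tr_K E_k(t)\approx c_k$ and $\Tr_K P_k^{(2)}\approx b_k^{(2)}\mathbbm{1}$ are established, the algebraic Morse inequality applied to the finite-rank complex $(\operatorname{range}E_\bullet(t),d_t|_{\operatorname{range}E_\bullet(t)})$ yields
\[
\sum_{j=0}^{k}(-1)^{k-j}c_j\;\ge\;\Bigl(\sum_{j=0}^{k}(-1)^{k-j}b_j^{(2)}\Bigr)\mathbbm{1}\mod\bar{\mathcal{I}}
\]
for $k=0,\dots,n-1$, once the piecewise trace is shown to be additive modulo $\bar{\mathcal{I}}$ on the relevant short exact sequences of finite-propagation projections. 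At $k=n$ the algebraic inequality collapses to an equality because the complex is supported in degrees $\le n$; combined with Atiyah's identity $\sum(-1)^k b_k^{(2)}=\chi(M/G)$ from \cite{A}, multiplying by $(-1)^n$ produces the second claim.

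The main obstacle I expect is transferring the local Witten estimates to a statement modulo $\bar{\mathcal{I}}$: individual error terms are exponentially small in $t$, but once summed over the $G$-orbit of critical points they must be shown to represent elements of $\bar{\mathcal{I}}$ rather than merely being small pointwise. This is exactly where the strong uniformness and boundedness hypotheses on $f$, the $G$-invariance of the metric, and the finite-propagation / piecewise-trace formalism of \cite{KKT1,KKT2,KKT3} must be coordinated; in particular, the explicit bound $\max\{3,\lceil n/2\rceil\}$ on the order of derivatives controlled in the definition of boundedness is presumably what is required for the Sobolev embeddings underlying the Witten model estimates to be uniform along the orbit.
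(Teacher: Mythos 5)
Your overall strategy (Witten deformation, localize near critical points for the $c_k$ side, identify the cohomology side with $b_k^{(2)}\mathbbm{1}$ via an averaging identity) matches the paper in spirit, but the specific mechanism you propose — spectral projections $E_k(t)$ onto a band $[0,\delta]$ together with an algebraic Morse inequality for the complex $(\operatorname{range}E_\bullet(t),d_t)$ — is precisely what the paper is forced to avoid, and this is where your argument has a genuine gap. The difficulty is that here $f$ is not $G$-invariant, so the Witten Laplacian $\Delta_t$ is not $G$-invariant and you cannot fall back on the von Neumann trace of $\mathcal{N}(G)$ as Novikov--Shubin do. You must work with the piecewise trace $\Tr$, and the piecewise trace is \emph{not} a trace: the identity $\Tr(AB)\equiv\Tr(BA)$ is only established modulo $\widehat{\mathcal{I}}$ and only for operators with Gaussian propagation (Proposition \ref{trace property}). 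The algebraic Morse inequality you invoke — ``additivity modulo $\bar{\mathcal{I}}$ on the relevant short exact sequences of finite-propagation projections'' — requires exactly this cyclic property, and spectral projections of $\Delta_t$ are obtained by applying a discontinuous indicator function $\mathbf{1}_{[0,\delta]}$ in functional calculus, which does not lie in the class $\mathcal{G}$ of Proposition \ref{Gaussian}; there is no reason for $E_k(t)$ to have Gaussian (or any controlled) propagation, and hence no piecewise-trace property to exploit. A secondary issue is the spectral gap itself: on this noncompact $M$ the spectrum of $\Delta_t$ can be essential and need not split into a small cluster plus a gap, and the paper never establishes or uses such a gap.

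The paper circumvents both problems. Instead of spectral projections it works with smooth Schwartz-class functional calculus $\psi(D_t^2)$ with $\psi(x)=x\phi(x)^2$, for which Gaussian propagation is proved (Proposition \ref{Gaussian}) so that the trace property is available. The ``algebraic Morse inequality'' is then replaced by a direct telescoping computation (Proposition \ref{inequality general}): writing $\Tr(\psi(D_t^2\vert_{\Lambda^i}))=\Tr(d_td_t^*\phi^2)+\Tr(d_t^*d_t\phi^2)$ and using $\Tr(d_td_t^*\phi(D_t^2\vert_{\Lambda^i})^2)\equiv\Tr(d_t^*d_t\phi(D_t^2\vert_{\Lambda^{i-1}})^2)\bmod\widehat{\mathcal{I}}$ makes the alternating sum collapse to a manifestly nonnegative term, with equality at $k=n$. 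On the right-hand side the paper avoids projections onto harmonic forms by defining the piecewise normalized Betti number $b_k(\tilde d)$ as an infimum over cutoff functions $\phi\in\mathcal{C}$ and proving its invariance under conjugation by bounded $F$ (Theorem \ref{Betti number invariance}), which when applied to $F=e^{-tf}$ identifies $b_k(d_t)$ with $b_k^{(2)}\mathbbm{1}$. To salvage your plan you would either need to prove the piecewise-trace property for spectral projections of non-$G$-invariant Laplacians (which seems out of reach) or replace the projections by Gaussian-propagation approximants, at which point you have essentially reconstructed the paper's argument.
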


We remark that Theorem \ref{main} makes sense only when $G$ is amenable because $\ell^\infty(G)\ne\bar{\mathcal{I}}$ if and only if $G$ is amenable (Proposition \ref{amenability}). By definition, $G$ is amenable if there is a $G$-invariant positive linear map $\mu\colon\ell^\infty(G)\to\R$ with $\mu(\mathbbm{1})=1$, which is called a $G$-invariant \emph{mean}. As a corollary to Theorem \ref{main}, we have the following mean value Morse inequalities.

\begin{corollary}
  \label{mean value}
  Let $f\colon M\to\R$ be a strongly uniform bounded Morse function. If $G$ is amenable, then for any $G$-invariant mean $\mu\colon\ell^\infty(G)\to\R$,
  \[
    \mu(c_k)-\mu(c_{k-1})+\cdots+(-1)^k\mu(c_0)\ge b_k^{(2)}-b_{k-1}^{(2)}+\cdots+(-1)^kb_0^{(2)}
  \]
  for $k=0,1,\ldots,n-1$ and
  \[
    \mu(c_n)-\mu(c_{n-1})+\cdots+(-1)^n\mu(c_0)=(-1)^n\chi(M/G).
  \]
\end{corollary}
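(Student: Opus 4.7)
The plan is to deduce Corollary \ref{mean value} directly from Theorem \ref{main} by applying a $G$-invariant mean $\mu$ to both sides of each displayed relation. The only real work is checking that $\mu$ respects the preorder $\ge\mod\bar{\mathcal{I}}$ and sends $\mathbbm{1}$ to $1$; everything else is immediate from the definitions.

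First I would verify that a $G$-invariant mean annihilates $\bar{\mathcal{I}}$. By definition, $\mu$ is positive, linear, and satisfies $\mu(\mathbbm{1})=1$, which forces the pointwise bound $|\mu(\phi)|\le\|\phi\|_\infty$; hence $\mu$ is a bounded linear functional on $\ell^\infty(G)$ and so is continuous in the weak topology. The $G$-invariance condition gives $\mu(\phi-g\cdot\phi)=0$ for every $g\in G$ and $\phi\in\ell^\infty(G)$, so $\mu$ vanishes on the generating set of $\mathcal{I}$, hence on all of $\mathcal{I}$, and by weak continuity also on the weak closure $\bar{\mathcal{I}}$.

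Next I would translate the relations in Theorem \ref{main} into numerical ones. Suppose $\phi_1\ge\phi_2\mod\bar{\mathcal{I}}$, so that $\phi_1\ge\phi_2+\phi_3$ pointwise for some $\phi_3\in\bar{\mathcal{I}}$. Positivity of $\mu$ (applied to $\phi_1-\phi_2-\phi_3\ge 0$) gives $\mu(\phi_1)\ge\mu(\phi_2)+\mu(\phi_3)=\mu(\phi_2)$. Applying the same argument to both inequalities in $\phi_1\approx\phi_2\mod\bar{\mathcal{I}}$ yields $\mu(\phi_1)=\mu(\phi_2)$. Feeding in the two statements of Theorem \ref{main} and using $\mu(\mathbbm{1})=1$ so that the constant multipliers on the right-hand sides survive intact, I obtain precisely the inequalities for $k=0,1,\ldots,n-1$ and the top-degree equality asserted in the corollary.

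There is essentially no serious obstacle here. The only step needing a word of justification is the passage from $\mathcal{I}$ to its weak closure $\bar{\mathcal{I}}$, which rests on the standard fact that a norm-bounded linear functional on a Banach space is automatically weakly continuous (equivalently, by Mazur's theorem, the norm and weak closures of the convex set $\mathcal{I}$ agree, so annihilation on $\mathcal{I}$ propagates to $\bar{\mathcal{I}}$).
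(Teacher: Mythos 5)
Your proposal is correct and follows essentially the same route as the paper: the paper also argues that a $G$-invariant mean is a bounded (hence weakly continuous) functional vanishing on $\mathcal{I}$, so it annihilates $\bar{\mathcal{I}}$, and then invokes positivity of $\mu$ together with $\mu(\mathbbm{1})=1$ to pass from the modular relations of Theorem \ref{main} to the numerical inequalities. Your remark that Mazur's theorem identifies $\bar{\mathcal{I}}$ with the norm closure $\widehat{\mathcal{I}}$ is a useful observation that the paper leaves implicit, but it plays the same role as the direct weak-continuity argument.
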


We can deduce from Theorem \ref{main} the weak Morse inequalities
\[
  c_k\ge b_k^{(2)}\mod\bar{\mathcal{I}}
\]
for $k=0,1,\ldots,n$, and by applying a property of the module of coinvariants $\ell^\infty(G)/\mathcal{I}$ proved in \cite{KKT} to it, we get the following property of the $L^2$-Betti numbers, which has its own interest.

\begin{corollary}
  \label{betti critical points}
  Let $f\colon M\to\R$ be a strongly uniform bounded Morse function. If $G$ is an infinite amenable group and $b^{(2)}_k\ne 0$, then $f$ must have infinitely many critical points with index $k$.
\end{corollary}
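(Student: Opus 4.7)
The plan is to argue by contradiction from the weak Morse inequality
\[
  c_k\ge b_k^{(2)}\mathbbm{1}\mod\bar{\mathcal{I}},
\]
which the paragraph preceding the statement identifies as an immediate consequence of Theorem \ref{main}. Assume for contradiction that $f$ has only finitely many critical points of index $k$. Then the function $c_k$ defined in \eqref{c_k} will have finite support: the closed fundamental domain $K$ from \eqref{tiling} is compact and so meets $\Crit_k(f)$ in only finitely many points, and since the translates $gK$ tile $M$ disjointly, only finitely many $g\in G$ can satisfy $\Crit_k(f)\cap gK\ne\varnothing$. Unfolding the preorder, there exists $\phi\in\bar{\mathcal{I}}$ with
\[
  c_k\ge b_k^{(2)}\mathbbm{1}+\phi
\]
pointwise on $G$.

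The core of the argument is to evaluate this inequality against a $G$-invariant mean $\mu\colon\ell^\infty(G)\to\R$, which exists by amenability of $G$. Such a $\mu$ is a positive linear functional with $\mu(\mathbbm{1})=1$, and its $G$-invariance forces it to vanish on $\mathcal{I}$; being a bounded linear functional, it is continuous in the weak topology and therefore vanishes on the weak closure $\bar{\mathcal{I}}$ as well. Applying $\mu$ to the pointwise inequality above will then give $\mu(c_k)\ge b_k^{(2)}$. The infiniteness of $G$ enters at this point: by $G$-invariance the value $\mu(\delta_g)$ is independent of $g\in G$, and the uniform bound $|F|\mu(\delta_g)=\mu(\chi_F)\le\mu(\mathbbm{1})=1$, valid for every finite $F\subseteq G$, forces this common value to vanish. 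Hence $\mu$ annihilates every finitely supported function, so $\mu(c_k)=0$, yielding $0\ge b_k^{(2)}$ and contradicting $b_k^{(2)}\ne 0$ since $L^2$-Betti numbers are nonnegative.

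The main substantive point in this plan is the upgrade from the vanishing of $\mu$ on $\mathcal{I}$ to its vanishing on $\bar{\mathcal{I}}$; this is precisely the role played by the property of the coinvariant module $\ell^\infty(G)/\mathcal{I}$ from \cite{KKT} alluded to in the paper, and in the sketch above it is supplied directly by the weak continuity of bounded linear functionals on $\ell^\infty(G)$. Amenability of $G$ is used exactly once, to produce the mean $\mu$, and infiniteness of $G$ is used exactly once, to force $\mu(\delta_g)=0$, so both hypotheses of the corollary are accounted for.
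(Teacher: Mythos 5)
Your proof is correct, and it follows the paper's own strategy closely: derive the weak Morse inequality $c_k\ge b_k^{(2)}\mathbbm{1}\mod\bar{\mathcal{I}}$, apply a $G$-invariant mean $\mu$ to obtain $\mu(c_k)\ge b_k^{(2)}$, and then show that $\mu(c_k)=0$ if $c_k$ had finite support, yielding a contradiction since $b_k^{(2)}>0$. The one place where you diverge is the final step. The paper establishes $\mu(c_k)=0$ for finitely supported $c_k$ by appealing to Proposition~\ref{l(G)} (from~\cite{KKT}), which says that any finitely supported $\phi\in\ell^\infty(G)$ lies in $\mathcal{I}$ when $G$ is finitely generated and infinite, and then uses $\mu(\mathcal{I})=0$. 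You instead argue directly from the definition of a mean: by $G$-invariance all the values $\mu(\delta_g)$ coincide, and positivity combined with $\mu(\mathbbm{1})=1$ and $|G|=\infty$ forces this common value to be $0$, so $\mu$ annihilates every finitely supported function. Your route is more elementary and self-contained, avoiding an appeal to an external structural fact about the coinvariant module $\ell^\infty(G)/\mathcal{I}$; the paper's route records that fact as a proposition of independent use, which is why it is packaged as a citation. Both are sound, and your reasoning about $\mu$ vanishing on $\bar{\mathcal{I}}$ (via boundedness of $\mu$ together with the fact that, by Mazur's theorem, the weak closure of a subspace agrees with its norm closure) mirrors the paper's own argument in Proposition~\ref{amenability}.
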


We also see that Theorem \ref{main} recovers the result of Novikov and Shubin \cite{NS} on Morse inequalities for a Morse function on $M/G$ and the $L^2$-Betti numbers of $M$ when $G$ is amenable (Corollary \ref{Novikov-Shubin}).

We prove Theorem \ref{main} by employing the Witten deformation by a Morse function. Then we need to disassemble the trace of an operator on the Hilbert space of $L^2$-sections of a vector bundle over $M$ with respect to the cover \eqref{tiling}. In Section \ref{Piecewise trace}, we introduce a piecewise trace-class operator and its piecewise trace which is the desired disassembly of the usual (or Roe's) trace. In Section \ref{Operator with Gaussian propagation}, we introduce an operator whose kernel function has propagation subordinate to a Gaussian function centered at the diagonal set, which we call an operator with Gaussian propagation. We prove the trace property of the piecewise trace of a product of operators with Gaussian propagation. We also consider the functional calculus of a generalized Dirac operators such that the resulting operator has Gaussian propagation. In Section \ref{Piecewise normalized Betti number}, we introduce a disassembly of the normalized Betti number of Roe \cite{R2,R3}, which is related to the $L^2$-Betti number of $M$, with respect to the cover \eqref{tiling}, and prove its invariance under a certain deformation, including the Witten deformation by a bounded function. In Section \ref{Morse inequalities}, we collect all results obtained so far, and apply the Witten deformation to prove Theorem \ref{main}. Some conditions on a Morse function that we assume seem to be quite technical. In Section \ref{Questions}, we pose some questions on such technical conditions.

\subsection*{Acknowledgement}

The authors are partially supported by JSPS KAKENHI Grant numbers JP23K22394 (Kato), JP22K03284 (Kishimoto), and JP22K03317 (Tsutaya)


\section{Piecewise trace}\label{Piecewise trace}

In \cite{R1}, Roe defined the trace of a uniform operator over a noncompact manifold of bounded geometry with regular exhaustion, and use it to build the index theory for such a noncompact manifold. In this section, we introduce the piecewise trace of an operator on the space of $L^2$-sections of a $G$-invariant vector bundle over $M$, which may be thought of as disassembly of Roe's trace with respect to the action of $G$.


\subsection{Fundamental domain}

First of all, we construct a fundamental domain. Choose a $G$-invariant triangulation $L$ of $M$, that is, the lift of a triangulation of $M/G$. Then $L/G$ is a triangulation of $M/G$. For each open $n$-simplex of $L/G$, we choose one lift to $L$. For each open $k$-simplex with $k<n$, we choose one lift to $M$ which is in the closure of some open $n$-simplex of $M$ that we have chosen. Now we define a \emph{fundamental domain} $K$ as the union of those open simplices of $M$. Observe that the closure of $K$ is the union of the closure of open $n$-simplices in $K$. Then the closure of $K$ is a finite complex, and a fundamental domain in the sense of \cite{KKT}. Remark that a fundamental domain $K$ needs not be connected. By definition, we have
\[
  M=\coprod_{g\in G}gK.
\]
In the sequel, we fix a fundamental domain $K$.


\subsection{Piecewise Hilbert-Schmidt operator}

Let $E\to M$ be a $G$-invariant vector bundle with $G$-invariant metric, that is, $E$ is the lift of a vector bundle over $M/G$ with metric. Let $L^2(E)$ denote the Hilbert space of $L^2$-sections of $E$. Then the map $g^{-1}\colon gK\to K$ induces an isometry $(g^{-1})^*\colon L^2(E\vert_K)\to L^2(E\vert_{gK})$. We choose an orthonormal basis $\{e_i\mid i\in I\}$ of $L^2(E\vert_K)$, and set $e_i^g=(g^{-1})^*(e_i)$ for $g\in G$. Then $\{e_i^g\mid i\in I\}$ is an orthonormal basis of $L^2(E\vert_{gK})$. Moreover, if we consider $e_i^g$ as an $L^2$-section of the entire vector bundle $E$ in the obvious way, then $\{e_i^g\mid i\in I,\,g\in G\}$ is an orthonormal basis of $L^2(E)$. For $u\in L^2(E)$, let $\|u\|$ denote the $L^2$-norm.

\begin{definition}
  A bounded operator $A\colon L^2(E)\to L^2(E)$ is called \emph{piecewise Hilbert-Schmidt} if
  \[
    \rho_2(A)\colon G\to\R,\quad g\mapsto\left(\sum_{i\in I}\|Ae_i^g\|^2\right)^\frac{1}{2}.
  \]
  is a well-defined bounded function.
\end{definition}

Let $\mathbf{1}_S\colon M\to\{0,1\}$ denote the characteristic function of a subset $S\subset M$. An operator $A\colon L^2(E)\to L^2(E)$ is a piecewise Hilbert-Schmidt operator if and only if $A\mathbf{1}_{gK}$ is a Hilbert-Schmidt operator for each $g\in G$ such that $\sup_{g\in G}\|A\mathbf{1}_{gK}\|_\mathrm{HS}$ is bounded, where $\|\cdot\|_\mathrm{HS}$ denotes the Hilbert-Schmidt norm. Observe that
\begin{equation}
  \label{rho_2}
  \rho_2(A)(g)=\|A\mathbf{1}_{gK}\|_\mathrm{HS}.
\end{equation}
Then $\rho_2(A)$ is independent of the choice of an orthonormal basis of $L^2(E\vert_K)$, and we may think of $\rho_2$ as disassembly of the Hilbert-Schmidt norm with respect to the cover \eqref{tiling}.  The following lemma is immediate from the definition of $\rho_2$.

\begin{lemma}
  Let $A,B\colon L^2(E)\to L^2(E)$ be piecewise Hilbert-Schmidt operators.
  \begin{enumerate}
    \item $\rho_2(A+B)\le\rho_2(A)+\rho_2(B)$.

    \item $\rho_2(cA)=|c|\rho_2(A)$ for $c\in\R$.

    \item $\rho_2(A)=0$ if and only if $A=0$.
  \end{enumerate}
\end{lemma}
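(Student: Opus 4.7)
The plan is to reduce everything to the corresponding statements for the ordinary Hilbert–Schmidt norm, using the identity \eqref{rho_2} which expresses $\rho_2(A)(g)$ pointwise as $\|A\mathbf{1}_{gK}\|_{\mathrm{HS}}$. Since each $gK$ gives a separate evaluation of the function on $G$, the inequalities and equalities only need to be checked pointwise in $g$.

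For (1), I would fix $g\in G$ and write $(A+B)\mathbf{1}_{gK}=A\mathbf{1}_{gK}+B\mathbf{1}_{gK}$; the triangle inequality for $\|\cdot\|_{\mathrm{HS}}$ then gives $\rho_2(A+B)(g)\le\rho_2(A)(g)+\rho_2(B)(g)$. For (2), the same identity together with the homogeneity $\|cT\|_{\mathrm{HS}}=|c|\,\|T\|_{\mathrm{HS}}$ yields $\rho_2(cA)(g)=|c|\rho_2(A)(g)$ pointwise.

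For (3), the nontrivial direction is that $\rho_2(A)=0$ implies $A=0$. Pointwise vanishing $\|A\mathbf{1}_{gK}\|_{\mathrm{HS}}=0$ forces $A\mathbf{1}_{gK}=0$ as an operator on $L^2(E)$ for every $g\in G$. Using the tiling \eqref{tiling}, the characteristic functions $\mathbf{1}_{gK}$ cover $M$ and the boundaries of the closures of $gK$ form a subset of measure zero in $M$, so $\sum_{g\in G}\mathbf{1}_{gK}=\mathbf{1}_M$ almost everywhere. Therefore, for any $u\in L^2(E)$, the decomposition $u=\sum_{g\in G}\mathbf{1}_{gK}u$ holds in $L^2(E)$, and applying the bounded operator $A$ termwise gives $Au=\sum_{g\in G}A\mathbf{1}_{gK}u=0$, so $A=0$. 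The reverse implication is trivial.

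There is no real obstacle; the only subtlety is the measure-theoretic care in passing between the pointwise identity $\sum_g\mathbf{1}_{gK}=\mathbf{1}_M$ (a.e.) and the operator-theoretic conclusion, but this is standard because $L^2(E)$ is insensitive to null sets and $A$ is bounded.
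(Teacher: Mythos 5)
Your proof is correct, and it matches the paper's intent: the paper simply declares the lemma ``immediate from the definition of $\rho_2$,'' and your argument spells out exactly that, reducing each part pointwise in $g$ to the corresponding property of the Hilbert--Schmidt norm via the identity $\rho_2(A)(g)=\|A\mathbf{1}_{gK}\|_{\mathrm{HS}}$. For (3) your measure-theoretic care is harmless but unnecessary, since $M=\coprod_{g\in G}gK$ is a genuine disjoint union so $\sum_g\mathbf{1}_{gK}=\mathbf{1}_M$ exactly; equivalently one can note that $Ae_i^g=0$ for the orthonormal basis $\{e_i^g\}$ of $L^2(E)$ forces $A=0$ by boundedness.
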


For a bounded operator $A\colon L^2(E)\to L^2(E)$, let $\|A\|$ denote its operator norm.

\begin{proposition}
  \label{HS norm}
  Let $A\colon L^2(E)\to L^2(E)$ be a bounded operator, and let $B\colon L^2(E)\to L^2(E)$ be a piecewise Hilbert-Schmidt operator. Then $AB$ is a piecewise Hilbert-Schmidt operator such that
  \[
    \rho_2(AB)\le\|A\|\rho_2(B).
  \]
\end{proposition}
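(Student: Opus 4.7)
The plan is to reduce the inequality to the standard submultiplicativity of the Hilbert--Schmidt norm against the operator norm, applied pointwise in $g \in G$. Concretely, I would use the reformulation \eqref{rho_2}, namely $\rho_2(C)(g) = \|C\mathbf{1}_{gK}\|_\mathrm{HS}$, for any piecewise Hilbert--Schmidt operator $C$. Under this identification, what needs to be shown is simply
\[
\|AB\mathbf{1}_{gK}\|_\mathrm{HS} \le \|A\| \cdot \|B\mathbf{1}_{gK}\|_\mathrm{HS}
\]
for each $g\in G$, followed by taking the supremum over $g$ to conclude that $\rho_2(AB)$ is a well-defined bounded function.

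To do the pointwise bound, I would work directly from the defining series $\rho_2(C)(g)^2 = \sum_{i\in I}\|Ce_i^g\|^2$ in terms of the orthonormal basis $\{e_i^g\mid i\in I\}$ of $L^2(E\vert_{gK})$. Boundedness of $A$ on $L^2(E)$ gives $\|ABe_i^g\| \le \|A\| \cdot \|Be_i^g\|$ for each $i$; squaring and summing over $i \in I$ yields
\[
\sum_{i\in I}\|ABe_i^g\|^2 \le \|A\|^2 \sum_{i\in I}\|Be_i^g\|^2 = \|A\|^2 \, \rho_2(B)(g)^2.
\]
Taking square roots gives the claimed inequality $\rho_2(AB)(g) \le \|A\| \, \rho_2(B)(g)$ at each $g$. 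Since $\rho_2(B)$ is by hypothesis bounded as a function on $G$, so is the right-hand side, so $\rho_2(AB)$ is a well-defined bounded function on $G$ and $AB$ is piecewise Hilbert--Schmidt.

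There is no real obstacle in this proposition: the whole argument is the elementary inequality $\|AC\|_\mathrm{HS} \le \|A\| \cdot \|C\|_\mathrm{HS}$ transported, through \eqref{rho_2}, to each fundamental piece $gK$. The only point worth stating carefully is that convergence of $\sum_i \|Be_i^g\|^2$ (which is guaranteed by $B$ being piecewise Hilbert--Schmidt) propagates to convergence of $\sum_i \|ABe_i^g\|^2$ via the above term-by-term domination, so that the finiteness and boundedness claims are justified before one takes suprema.
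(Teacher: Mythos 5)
Your proof is correct and is essentially the same as the paper's: both reduce to the pointwise bound $\|AB\mathbf{1}_{gK}\|_\mathrm{HS}\le\|A\|\|B\mathbf{1}_{gK}\|_\mathrm{HS}$ via \eqref{rho_2} and then take suprema over $g\in G$. The only difference is that you re-derive the Hilbert--Schmidt submultiplicativity from the defining series $\sum_i\|ABe_i^g\|^2$ rather than citing it, which is a matter of exposition, not of substance.
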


\begin{proof}
  By \eqref{rho_2}, $\rho_2(AB)(g)=\|AB\mathbf{1}_{gK}\|_\mathrm{HS}\le\|A\|\|B\mathbf{1}_{gK}\|_\mathrm{HS}=\|A\|\rho_2(B)(g)$ for any $g\in G$. Then the statement follows.
\end{proof}

Let $A\colon L^2(E)\to L^2(E)$ be a Hilbert-Schmidt operator, and let $B\colon L^2(E)\to L^2(E)$ be a bounded operator. Then we have
\[
  \|AB\|_\mathrm{HS}\le\|A\|_\mathrm{HS}\|B\|\quad\text{and}\quad\|A^*\|_\mathrm{HS}=\|A\|_\mathrm{HS}.
\]
Hence we may expect that $\rho_2$ has analogous properties, but it fails. Choose any $e\in L^2(E\vert_K)$ with $\|e\|=1$ and $g\in G$ with $g\ne 1$. We define a piecewise Hilbert-Schmidt operator
\[
  A=\langle-,e\rangle(g^{-1})^*(e)
\]
where $\langle-,-\rangle$ denotes the $L^2$-inner product. Then $A^*=\langle-,(g^{-1})^*(e)\rangle e$ is a piecewise Hilbert-Schmidt operator. Observe that $\rho_2(A^*A)(1)=1\ge 0=\rho_2(A^*)(1)\|A\|$ and $\rho_2(A)(1)=1\ne 0=\rho_2(A^*)(1)$. Then
\[
  \rho_2(A^*A)\not\le\rho_2(A^*)\|A\|\quad\text{and}\quad\rho_2(A^*)\ne\rho_2(A).
\]

Let $A\colon L^2(E)\to L^2(E)$ be an operator. We say that $A$ is represented by a \emph{kernel function} $k_A$ if $k_A$ is a section of the vector bundle $E^*\boxtimes E\to M\times M$ such that for any $u\in L^2(E)$,
\[
  (Au)(x)=\int_Mk_A(x,y)u(y)\vol(y)
\]
where $\vol$ denotes the pullback of the volume form of $M/G$ to $M$. We also say that an operator $A$ is \emph{smoothing} if it is represented by a smooth kernel function.

\begin{lemma}
  \label{HS norm integral}
  If $A\colon L^2(E)\to L^2(E)$ is a piecewise Hilbert-Schmidt smoothing operator, then for each $g\in G$,
  \[
    \rho_2(A)(g)=\left(\int_M\int_{gK}|k_A(x,y)|^2\vol(y)\vol(x)\right)^\frac{1}{2}.
  \]
\end{lemma}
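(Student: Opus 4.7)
The plan is to reduce the statement to the familiar identification of the Hilbert--Schmidt norm of a kernel operator with the $L^2$-norm of its kernel, applied to the operator $A\mathbf{1}_{gK}$ rather than $A$ itself.

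First I would invoke \eqref{rho_2}, which gives $\rho_2(A)(g)=\|A\mathbf{1}_{gK}\|_\mathrm{HS}$. Since $A$ is represented by the smooth kernel $k_A$, the composite $A\mathbf{1}_{gK}$ is represented by the kernel $(x,y)\mapsto k_A(x,y)\mathbf{1}_{gK}(y)$: indeed, for $u\in L^2(E)$,
\[
(A\mathbf{1}_{gK}u)(x)=\int_M k_A(x,y)\mathbf{1}_{gK}(y)u(y)\vol(y).
\]

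Next, I would apply the standard Hilbert--Schmidt norm identity for kernel operators. Using the orthonormal basis $\{e_i^h\mid i\in I,\,h\in G\}$ of $L^2(E)$ introduced above, together with Parseval's identity fibrewise in $x$ and Fubini's theorem (justified because $A\mathbf{1}_{gK}$ is by hypothesis Hilbert--Schmidt, so the kernel is automatically $L^2$ on $M\times gK$), one obtains
\[
\|A\mathbf{1}_{gK}\|_\mathrm{HS}^2=\sum_{i,h}\|A\mathbf{1}_{gK}e_i^h\|^2=\int_M\int_{gK}|k_A(x,y)|^2\vol(y)\vol(x).
\]
Taking square roots gives the desired formula.

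The only point requiring a bit of care is the justification of the Fubini step and the identification of the Hilbert--Schmidt norm with the $L^2$-norm of the kernel in the present vector-bundle setting; this is completely standard once one fixes local orthonormal frames of $E$ to reduce to the scalar case, and smoothness of $k_A$ makes all integrals in sight well defined. There is no genuine obstacle; the lemma is essentially a restatement of the classical kernel characterization, made local to the fundamental domain $gK$ via the cutoff $\mathbf{1}_{gK}$.
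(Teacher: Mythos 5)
Your proof is correct and takes essentially the same approach as the paper: both reduce to the identification of the Hilbert--Schmidt norm with the $L^2$-norm of the kernel via Parseval's identity and Fubini/Tonelli applied to the truncated operator $A\mathbf{1}_{gK}$. The paper writes out the Parseval step explicitly in the basis $\{e_i^h\}$ rather than quoting the standard identity, but the content is identical.
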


\begin{proof}
  Observe that
  \begin{align*}
    (\rho_2(A)(g))^2&=\sum_{i\in I}\left(\|Ae_i^g\|^2\right)^{\frac{1}{2}}\\
    &=\sum_{i\in I}\int_M\left|\int_Mk_A(x,y)e_i^g(y)\vol(y)\right|^2\vol(x)\\
    &=\int_M\sum_{\substack{i\in I\\h\in G}}\left|\int_Mk_A(x,y)\mathbf{1}_{gK}e_i^h(y)\vol(y)\right|^2\vol(x)\\
    &=\int_M\int_M|k_A(x,y)\mathbf{1}_{gK}|^2\vol(y)\vol(x)\\
    &=\int_M\int_{gK}|k_A(x,y)|^2\vol(y)\vol(x).
  \end{align*}
  where we use Parseval's identity for the fourth equality. Then the statement follows.
\end{proof}

\begin{lemma}
  \label{HS inner product}
  If $A,B\colon L^2(E)\to L^2(E)$ are piecewise Hilbert-Schmidt operators, then
  \[
    \rho_2(A,B)\colon G\to\R,\quad g\mapsto\sum_{i\in I}\langle Ae_i^g,Be_i^g\rangle.
  \]
  is a well-defined bounded function which is independent of the choice of an orthonormal basis of $L^2(E\vert_K)$.
\end{lemma}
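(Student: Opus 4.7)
The plan is to realize $\rho_2(A,B)(g)$ as the Hilbert--Schmidt inner product of the operators $A\mathbf{1}_{gK}$ and $B\mathbf{1}_{gK}$, from which all three required properties follow painlessly.

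For absolute convergence of the defining series and the resulting boundedness, I would apply the Cauchy--Schwarz inequality twice: once inside each inner product to bound $|\langle Ae_i^g,Be_i^g\rangle|\le\|Ae_i^g\|\cdot\|Be_i^g\|$, and once on $\ell^2(I)$ to collect these into $\rho_2(A)(g)\,\rho_2(B)(g)$. This yields the pointwise estimate
\[
  |\rho_2(A,B)(g)|\le\rho_2(A)(g)\,\rho_2(B)(g),
\]
and since $\rho_2(A)$ and $\rho_2(B)$ are bounded functions on $G$ by the piecewise Hilbert--Schmidt hypothesis, $\rho_2(A,B)$ is a well-defined bounded function on $G$.

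For basis-independence, I would use that $\{e_i^h\mid i\in I,\,h\in G\}$ is an orthonormal basis of $L^2(E)$ and that $\mathbf{1}_{gK}e_i^h$ vanishes unless $h=g$, in which case it equals $e_i^g$. Plugging this basis into the standard series expansion of the Hilbert--Schmidt inner product gives
\[
  \langle A\mathbf{1}_{gK},B\mathbf{1}_{gK}\rangle_\mathrm{HS}=\sum_{i\in I,\,h\in G}\langle A\mathbf{1}_{gK}e_i^h,B\mathbf{1}_{gK}e_i^h\rangle=\sum_{i\in I}\langle Ae_i^g,Be_i^g\rangle=\rho_2(A,B)(g).
\]
Because the Hilbert--Schmidt inner product of two fixed operators is intrinsic to the Hilbert space $L^2(E)$, the value $\rho_2(A,B)(g)$ does not depend on the initial choice of orthonormal basis $\{e_i\mid i\in I\}$ of $L^2(E\vert_K)$.

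I do not foresee any real obstacle: the argument is essentially the standard proof that the Hilbert--Schmidt inner product is basis-free, adapted to the orthogonal decomposition $L^2(E)=\bigoplus_{g\in G}L^2(E\vert_{gK})$ induced by the tiling \eqref{tiling}. The only point meriting mild care is the identification of the sum over $I$ alone, rather than over $I\times G$, as the full Hilbert--Schmidt inner product on $L^2(E)$, which is precisely what the vanishing $\mathbf{1}_{gK}e_i^h=0$ for $h\ne g$ delivers.
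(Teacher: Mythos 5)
Your proof is correct, and for basis-independence you do exactly what the paper does: identify $\rho_2(A,B)(g)$ with the Hilbert--Schmidt inner product $\langle A\mathbf{1}_{gK},B\mathbf{1}_{gK}\rangle_{\mathrm{HS}}$, which is intrinsic. For the well-definedness and boundedness step, however, you take a slightly different and more direct route. The paper introduces an auxiliary ``sign operator'' $P=\sum_{i\in I,\,g\in G}\mathrm{sgn}(\langle Ae_i^g,Be_i^g\rangle)\langle -,e_i^g\rangle e_i^g$ so that $\rho_2(A,BP)(g)=\sum_{i\in I}|\langle Ae_i^g,Be_i^g\rangle|$ and $\rho_2(BP)=\rho_2(B)$, and then invokes the Schwartz-type inequality $\rho_2(C,D)\le\rho_2(C)\rho_2(D)$ to conclude. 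You instead apply Cauchy--Schwarz twice (once in $L^2(E)$, once in $\ell^2(I)$) to obtain $\sum_{i\in I}|\langle Ae_i^g,Be_i^g\rangle|\le\rho_2(A)(g)\rho_2(B)(g)$ directly; this establishes absolute convergence and the bound in one step, without the detour through $P$, and in effect reproves the Schwartz inequality rather than citing it. The two arguments amount to the same estimate, but yours is a little cleaner in that it avoids the auxiliary operator and sidesteps any appearance of circularity in invoking the Schwartz inequality for $\rho_2(\cdot,\cdot)$ before its well-definedness has been fully established.
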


\begin{proof}
  Define an operator
  \begin{equation}
    \label{sign}
    P=\sum_{i\in I,\,g\in G}\mathrm{sgn}(\langle Ae_i^g,Be_i^g\rangle)\langle-,e_i^g\rangle e_i^g
  \end{equation}
  where we put $\mathrm{sgn}(0)=0$. Then $BP$ is a piecewise Hilbert-Schmidt operator such that $\rho_2(BP)=\rho_2(B)$ and $\rho_2(A,BP)=\sum_{i\in I}|\langle Ae_i^g,Be_i^g\rangle|$. Observe that for any piecewise Hilbert-Schmidt operators $C$ and $D$, the Schwartz inequality
  \begin{equation}
    \label{Schwartz inequality}
    \rho_2(C,D)\le\rho_2(C)\rho_2(D)
  \end{equation}
  holds. Thus $\rho_2(A,BP)$ is a bounded function, and therefore $\rho_2(A,B)$ is a well-defined bounded function. The independence of the choice of an orthonormal basis of $L^2(E\vert_K)$ follows from the fact that
  \[
    \rho_2(A,B)(g)=\langle A\mathbf{1}_{gK},B\mathbf{1}_{gK}\rangle
  \]
  for $g\in G$.
\end{proof}


\subsection{Piecewise trace-class operator}

We define a piecewise trace-class operator as an analogy to a trace-class operator.

\begin{definition}
  An operator $A\colon L^2(E)\to L^2(E)$ is a \emph{piecewise trace-class} operator if $A=B^*C$ for some piecewise Hilbert-Schmidt operators $B,C\colon L^2(E)\to L^2(E)$. Its \emph{piecewise trace} is defined by
  \[
    \Tr(A)=\rho_2(B,C).
  \]
\end{definition}

By Lemma \ref{HS inner product}, the piecewise trace is a well-defined bounded function on $G$ which is independent of the choice of an orthonormal basis of $L^2(E\vert_K)$. It is easy to see that $\Tr(A)$ is independent of the choice of a decomposition $A=B^*C$. We show the basic properties of a piecewise trace.

\begin{lemma}
  \label{trace integral}
  If $A=B^*C$ for piecewise Hilbert-Schmidt smoothing operators $B,C\colon L^2(E)\to L^2(E)$, then $A$ is a trace-class smoothing operator such that
  \[
    \Tr(A)(g)=\int_{gK}\mathrm{tr}(k_A(x,x))\vol(x).
  \]
\end{lemma}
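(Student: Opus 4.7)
The plan is to reduce the identity to a kernel-theoretic computation on $M$ and then match it with the piecewise trace via the Hilbert--Schmidt inner-product reinterpretation of $\rho_2(B,C)$ given in the proof of Lemma \ref{HS inner product}.

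First I would identify the smooth kernel of $A = B^*C$. Since $B$ is smoothing, $B^*$ has kernel $k_{B^*}(x,y) = k_B(y,x)^*$ (fibrewise adjoint), and the composition formula yields
\[
  k_A(x, y) = \int_M k_B(z, x)^* k_C(z, y)\,\vol(z).
\]
Smoothness of $k_A$ and convergence of this integral follow from smoothness of $k_B, k_C$ together with the piecewise Hilbert--Schmidt hypothesis, which by Lemma \ref{HS norm integral} gives $L^2$-control of $k_B$ and $k_C$ over each slab $M\times gK$. Restricting to the diagonal and taking the fibrewise trace,
\[
  \mathrm{tr}(k_A(x, x)) = \int_M \langle k_C(z, x), k_B(z, x)\rangle\,\vol(z),
\]
where $\langle\cdot,\cdot\rangle$ denotes the pointwise Hilbert--Schmidt inner product on $\Hom(E_x, E_z)$, arising from the identity $\mathrm{tr}(M^* N) = \langle N, M\rangle$.

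Next I would integrate over $x\in gK$, invoke Fubini (justified by Cauchy--Schwarz together with Lemma \ref{HS norm integral}), and rename the dummy variables to obtain
\[
  \int_{gK}\mathrm{tr}(k_A(x, x))\,\vol(x) = \int_M \int_{gK}\langle k_B(x, y), k_C(x, y)\rangle\,\vol(y)\,\vol(x).
\]
On the other hand, by the definition $\Tr(A) = \rho_2(B, C)$ and the identity $\rho_2(B, C)(g) = \langle B\mathbf{1}_{gK}, C\mathbf{1}_{gK}\rangle$ appearing at the end of the proof of Lemma \ref{HS inner product}, $\Tr(A)(g)$ equals the Hilbert--Schmidt inner product of the genuine Hilbert--Schmidt operators $B\mathbf{1}_{gK}$ and $C\mathbf{1}_{gK}$, whose kernels are $k_B(x, y)\mathbf{1}_{gK}(y)$ and $k_C(x, y)\mathbf{1}_{gK}(y)$. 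The classical expression of the Hilbert--Schmidt inner product as the $L^2$-inner product of the kernels over $M\times M$ then produces exactly the same double integral, closing the identification.

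The main subtlety is analytic rather than algebraic: the operators $B$ and $C$ themselves are only piecewise Hilbert--Schmidt on the full $L^2(E)$, so the composition $B^*C$, Fubini's theorem, and the passage between the operator-theoretic and kernel-theoretic forms of the Hilbert--Schmidt inner product must all be performed after localizing to the slab $M\times gK$, where the cut-offs $B\mathbf{1}_{gK}$ and $C\mathbf{1}_{gK}$ are genuinely Hilbert--Schmidt by hypothesis. Once this localization is in place the computation reduces to the familiar situation of trace-class operators on a compact manifold.
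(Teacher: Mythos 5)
Your proposal is correct and takes essentially the same route as the paper: both identify $\Tr(A)(g)=\rho_2(B,C)(g)$ with the ordinary trace of the genuinely trace-class cutoff (equivalently, the Hilbert--Schmidt inner product $\langle B\mathbf{1}_{gK},C\mathbf{1}_{gK}\rangle$), and then pass to the kernel-integral form; where the paper simply cites Roe's Theorem 8.12 for the last step, you carry out the kernel composition, Fubini, and dummy-variable bookkeeping explicitly, which is the content of that cited theorem.
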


\begin{proof}
  By definition, $A$ is of piecewise trace-class, and since the composite of smoothing operators is smoothing, $A$ is a smoothing operator. Since $\Tr(A)(g)$ is the trace of the trace-class operator $\mathbf{1}_gA\mathbf{1}_g$, the equality in the statement follows quite similarly to
  \cite[Theorems 8.12]{R3}.
\end{proof}

\begin{proposition}
  Let $A\colon L^2(E)\to L^2(E)$ be a piecewise trace-class smoothing operator. Then the piecewise trace of $A$ modulo $\mathcal{I}$ is independent of the choice of a fundamental domain $K$.
\end{proposition}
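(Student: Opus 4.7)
The plan is to express $\Tr_K(A)-\Tr_{K'}(A)$ as an explicit finite sum of elements of the form $\psi-h\cdot\psi$ with $\psi\in\ell^\infty(G)$, so that it visibly lies in $\mathcal{I}$. The first step is to apply Lemma \ref{trace integral} to reduce the statement to comparing the integrals
\[
\int_{gK}f(x)\,\vol(x)\quad\text{and}\quad\int_{gK'}f(x)\,\vol(x),\qquad f(x):=\mathrm{tr}(k_A(x,x)),
\]
as $g$ ranges over $G$.

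Next, I would slice the two fundamental domains against each other. For $h\in G$, set $U_h=K'\cap hK$. Since the closure of $K'$ is compact and the tiling \eqref{tiling} is locally finite, $U_h=\emptyset$ for all but finitely many $h$. Because $\{hK\}_{h\in G}$ tiles $M$, one has $K'=\coprod_h U_h$; applying the same argument with the roles of $K$ and $K'$ exchanged gives $K=\coprod_h h^{-1}U_h$. Define
\[
\psi_h\colon G\to\R,\quad\psi_h(g)=\int_{gU_h}f(x)\,\vol(x).
\]
Translating these two decompositions by $g$ yields
\[
\Tr_{K'}(A)(g)=\sum_h\psi_h(g),\qquad\Tr_K(A)(g)=\sum_h\psi_h(gh^{-1})=\sum_h(h^{-1}\cdot\psi_h)(g),
\]
so $\Tr_K(A)-\Tr_{K'}(A)=\sum_h(h^{-1}\cdot\psi_h-\psi_h)$, a finite $\R$-linear combination of generators of $\mathcal{I}$ as soon as each $\psi_h$ is known to be bounded.

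Boundedness of $\psi_h$ is the step I expect to be the main technical obstacle, since it requires relating the purely set-theoretic cut $U_h$ back to the Hilbert--Schmidt machinery. Fix a decomposition $A=B^*C$ with $B,C$ piecewise Hilbert--Schmidt. Because $U_h\subset hK$, the projection $\mathbf{1}_{gU_h}$ factors through $\mathbf{1}_{ghK}$, so that
\[
\|B\mathbf{1}_{gU_h}\|_{\mathrm{HS}}\le\|B\mathbf{1}_{ghK}\|_{\mathrm{HS}}=\rho_2(B)(gh),
\]
and likewise for $C$. Rewriting
\[
\psi_h(g)=\mathrm{tr}\bigl((B\mathbf{1}_{gU_h})^*(C\mathbf{1}_{gU_h})\bigr)=\langle C\mathbf{1}_{gU_h},B\mathbf{1}_{gU_h}\rangle_{\mathrm{HS}}
\]
and applying the Cauchy--Schwarz inequality gives $|\psi_h(g)|\le\|\rho_2(B)\|_\infty\|\rho_2(C)\|_\infty$, so $\psi_h\in\ell^\infty(G)$. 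Once this is in place, the conclusion $\Tr_K(A)\equiv\Tr_{K'}(A)\pmod{\mathcal{I}}$ follows directly from the identity displayed in the previous paragraph.
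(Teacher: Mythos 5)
Your argument is correct, and it takes a genuinely different route from the paper's. The paper's proof is a two-line appeal to an external reference: by Lemma~\ref{trace integral}, $\Tr(A)$ is the integral of the bounded $n$-form $\mathrm{tr}(k_A(x,x))\vol(x)$ in the sense of~\cite{KKT}, and the paper then cites Milizia~\cite{M} for the fact that such integrals modulo $\mathcal{I}$ are independent of the choice of fundamental domain. You instead give a self-contained proof by slicing the two fundamental domains against each other: the decompositions $K'=\coprod_h U_h$ and $K=\coprod_h h^{-1}U_h$ with $U_h=K'\cap hK$ are both valid (the second because $K\cap h^{-1}K'=h^{-1}(hK\cap K')$ and $\{h^{-1}K'\}$ tiles $M$), local finiteness of the tiling makes the sums finite, and the identity $\Tr_K(A)-\Tr_{K'}(A)=\sum_h(h^{-1}\cdot\psi_h-\psi_h)$ exhibits the difference explicitly in $\mathcal{I}$. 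Your boundedness argument for $\psi_h$ via $\|B\mathbf{1}_{gU_h}\|_{\mathrm{HS}}\le\rho_2(B)(gh)$ and Cauchy--Schwarz is clean and correct. The paper's route is shorter but opaque; yours is longer but transparent and avoids the dependence on~\cite{M}, essentially re-deriving the special case of that result needed here. One small point worth making explicit: to invoke Lemma~\ref{trace integral} (and its analogue for $gU_h$ in place of $gK$, which the same proof gives), you should choose the decomposition $A=B^*C$ with $B,C$ piecewise Hilbert--Schmidt \emph{smoothing} operators, as the lemma requires; the boundedness estimate then works for the same choice.
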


\begin{proof}
  By Lemma \ref{trace integral}, $\Tr(A)$ is given by the integral of the bounded $n$-form $\mathrm{tr}(k_A(x,x))\vol(x)$ in the sense of \cite{KKT}. It is shown in \cite{M} that such an integral modulo $\mathcal{I}$ is independent of the choice of $K$.
\end{proof}

For a piecewise trace-class operator $A\colon L^2(E)\to L^2(E)$, we can define a bounded function
\[
  \rho_1(A)\colon G\to\R,\quad g\mapsto\sum_{i\in I}|\langle e_i^g,Ae_i^g\rangle|.
\]
Indeed, the proof of Lemma \ref{HS inner product} implies that $\rho_1(A)$ is a well-defined bounded function. Here, we remark that $\rho_1$ depends on the choice of an orthonormal basis of $L^2(E\vert_K)$, so when we consider $\rho_1$, we always fix an orthonormal basis of $L^2(E\vert_K)$. We record properties of $\rho_1$ which are immediate from the definition.

\begin{lemma}
  Let $A,B\colon L^2(E)\to L^2(E)$ be piecewise trace-class operators.
  \begin{enumerate}
    \item $\rho_1(A+B)\le\rho_1(A)+\rho_1(B)$.

    \item $\rho_1(cA)=|c|\rho_1(A)$ for $c\in\R$.
  \end{enumerate}
\end{lemma}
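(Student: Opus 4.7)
The plan is to prove both inequalities pointwise on $G$, using nothing more than linearity of the inner product in the second argument together with the triangle inequality and absolute homogeneity on $\R$.

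For each fixed $g\in G$, the value $\rho_1(A)(g)$ is the sum $\sum_{i\in I}|\langle e_i^g,Ae_i^g\rangle|$ over the fixed orthonormal basis of $L^2(E\vert_K)$. For part (1), I would expand $\langle e_i^g,(A+B)e_i^g\rangle=\langle e_i^g,Ae_i^g\rangle+\langle e_i^g,Be_i^g\rangle$ by linearity, apply the triangle inequality in $\R$ termwise, and then sum over $i\in I$; the resulting termwise inequality yields $\rho_1(A+B)(g)\le\rho_1(A)(g)+\rho_1(B)(g)$. Note that by the previous lemma's argument (the one used for $\rho_2(A,B)$ being well-defined and bounded via the sign-rescaling trick \eqref{sign}), the two summands on the right are finite, so no convergence subtlety arises.

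For part (2), I would likewise expand $\langle e_i^g,(cA)e_i^g\rangle=c\langle e_i^g,Ae_i^g\rangle$ by linearity of the inner product, so that $|\langle e_i^g,(cA)e_i^g\rangle|=|c|\,|\langle e_i^g,Ae_i^g\rangle|$ termwise; summing over $i\in I$ and factoring out $|c|$ yields the claimed equality $\rho_1(cA)(g)=|c|\rho_1(A)(g)$.

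There is no real obstacle here: both statements reduce to scalar inequalities applied termwise, and the well-definedness of $\rho_1$ on piecewise trace-class operators has already been established right before the lemma, so the sums in question are all finite and all sums may be freely rearranged or scaled. The whole argument is a one-paragraph verification, which is exactly why the authors preface it with the remark that the properties are immediate from the definition.
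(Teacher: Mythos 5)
Your proof is correct and is exactly the argument the paper has in mind: the lemma is stated in the text as ``immediate from the definition'' and is given no written proof, and the termwise application of linearity of $\langle e_i^g,-\rangle$, the triangle inequality, and absolute homogeneity over $\R$, followed by summation over $i\in I$, is the intended verification. You also handle the one non-cosmetic point sensibly, namely that the sums on the right-hand side of (1) are known to be finite (by the remark preceding the lemma that $\rho_1$ is well defined on piecewise trace-class operators), so the termwise bound makes the left-hand sum finite as well and the inequality is a genuine pointwise statement on $G$; no rearrangement issues arise since all terms are nonnegative.
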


\begin{proposition}
  \label{trace positive}
  Let $A\colon L^2(E)\to L^2(E)$ be a piecewise trace-class operator. If $A$ is positive, then
  \[
    \Tr(A)=\rho_1(A)\ge 0.
  \]
\end{proposition}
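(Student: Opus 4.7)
The plan is to unpack both $\Tr(A)$ and $\rho_1(A)$ as explicit series indexed by the orthonormal basis $\{e_i^g\mid i\in I\}$ of $L^2(E|_{gK})$, and use the positivity of $A$ to identify them term by term. The key observation is that, although the definition of $\Tr(A)$ a priori depends on a factorization $A=B^*C$, in the positive case the series one obtains collapses to a manifestly decomposition-free, non-negative expression.

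First, I would fix any decomposition $A=B^*C$ with $B,C$ piecewise Hilbert-Schmidt (provided by the hypothesis that $A$ is piecewise trace-class), so that by definition
\[
  \Tr(A)(g)=\rho_2(B,C)(g)=\sum_{i\in I}\langle Be_i^g,Ce_i^g\rangle.
\]
Crucially, this series converges absolutely: the proof of Lemma \ref{HS inner product}, via the sign operator $P$ and the Schwartz inequality \eqref{Schwartz inequality}, actually shows that $\sum_{i\in I}|\langle Be_i^g,Ce_i^g\rangle|<\infty$. This lets me transpose each summand by the adjoint identity
\[
  \langle Be_i^g,Ce_i^g\rangle=\langle e_i^g,B^*Ce_i^g\rangle=\langle e_i^g,Ae_i^g\rangle,
\]
yielding $\Tr(A)(g)=\sum_{i\in I}\langle e_i^g,Ae_i^g\rangle$.

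Next, I would invoke positivity: since $A\ge 0$, every term $\langle e_i^g,Ae_i^g\rangle$ is a non-negative real number, so $|\langle e_i^g,Ae_i^g\rangle|=\langle e_i^g,Ae_i^g\rangle$. Summing over $i\in I$ gives
\[
  \rho_1(A)(g)=\sum_{i\in I}|\langle e_i^g,Ae_i^g\rangle|=\sum_{i\in I}\langle e_i^g,Ae_i^g\rangle=\Tr(A)(g),
\]
which is also non-negative as a sum of non-negative reals, establishing both the equality and the inequality.

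I do not expect a substantial obstacle: the proof is a short symbol-push once the definitions are unpacked. The only delicate point is the legitimacy of the term-by-term manipulation, which is already secured by the absolute convergence built into the proof of Lemma \ref{HS inner product}. As a byproduct we see that, for a positive piecewise trace-class operator, $\Tr(A)$ is visibly independent of the chosen factorization $A=B^*C$, since the right-hand side $\rho_1(A)$ makes no reference to $B$ or $C$.
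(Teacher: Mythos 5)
Your proof is correct and takes essentially the same route as the paper's: unpack $\Tr(A)(g)=\rho_2(B,C)(g)$ via the adjoint identity $\langle Be_i^g,Ce_i^g\rangle=\langle e_i^g,Ae_i^g\rangle$, then use positivity to drop the absolute values in $\rho_1(A)$ term by term. The paper's proof is the terse one-line version of exactly this observation; your elaboration of the absolute convergence (already secured in Lemma \ref{HS inner product}) and of the factorization-independence remark are sound but not a different method.
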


\begin{proof}
  If $A$ is positive, then $\langle e_i^g,Ae_i^g\rangle\ge 0$ for any $g\in G$ and $i\in I$, implying $\Tr(A)=\rho_1(A)\ge 0$.
\end{proof}

\begin{proposition}
  \label{HS trace-class}
  If $A,B\colon L^2(E)\to L^2(E)$ are piecewise Hilbert-Schmidt operators, then
  \[
    \rho_1(A^*B)\le\rho_2(A)\rho_2(B).
  \]
\end{proposition}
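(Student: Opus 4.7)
The plan is to reduce the claim to the Schwartz inequality \eqref{Schwartz inequality} via the sign-twisting trick already used in the proof of Lemma~\ref{HS inner product}. First I would rewrite the left-hand side in terms of $\rho_2$. By taking the adjoint inside the inner product,
\[
\rho_1(A^*B)(g)=\sum_{i\in I}|\langle e_i^g,A^*Be_i^g\rangle|=\sum_{i\in I}|\langle Ae_i^g,Be_i^g\rangle|,
\]
so the problem is to bound this by $\rho_2(A)(g)\rho_2(B)(g)$ for every $g\in G$.

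Next I would introduce the sign operator of \eqref{sign}, namely
\[
P=\sum_{i\in I,\,g\in G}\mathrm{sgn}(\langle Ae_i^g,Be_i^g\rangle)\langle-,e_i^g\rangle e_i^g.
\]
Since $P$ is a bounded operator of norm at most $1$, Proposition~\ref{HS norm} shows that $BP$ is piecewise Hilbert-Schmidt with $\rho_2(BP)\le\rho_2(B)$. A direct computation gives $\langle Ae_i^g,BPe_i^g\rangle=|\langle Ae_i^g,Be_i^g\rangle|$, so
\[
\rho_1(A^*B)(g)=\sum_{i\in I}\langle Ae_i^g,BPe_i^g\rangle=\rho_2(A,BP)(g).
\]

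Finally, the Schwartz inequality \eqref{Schwartz inequality} proved inside Lemma~\ref{HS inner product} yields
\[
\rho_2(A,BP)\le\rho_2(A)\rho_2(BP)\le\rho_2(A)\rho_2(B),
\]
which combined with the previous identity delivers the desired estimate.

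There is no real obstacle: the argument is the exact analogue of the classical proof of $|\Tr(A^*B)|\le\|A\|_{\mathrm{HS}}\|B\|_{\mathrm{HS}}$, made pointwise in $g\in G$. The only thing to verify carefully is that the pointwise Cauchy-Schwarz in the index~$i$ (which is what \eqref{Schwartz inequality} packages) survives the sign-modification, and this is exactly why the operator $P$ in \eqref{sign} is built as a partial isometry supported on the chosen orthonormal basis of $L^2(E|_K)$.
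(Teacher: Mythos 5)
Your proof is correct and follows essentially the same route as the paper: introduce the sign operator $P$ of \eqref{sign} to reduce $\rho_1(A^*B)$ to $\rho_2(A,BP)$ and then apply the Schwartz inequality \eqref{Schwartz inequality}. If anything, you are slightly more careful in using only the inequality $\rho_2(BP)\le\rho_2(B)$ (via Proposition~\ref{HS norm} and $\|P\|\le 1$), whereas the paper asserts the equality $\rho_2(BP)=\rho_2(B)$, which can fail when $\mathrm{sgn}(\langle Ae_i^g,Be_i^g\rangle)=0$ while $Be_i^g\neq 0$; the inequality is all that is needed, so both arguments go through.
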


\begin{proof}
  Let $P$ be as in \eqref{sign}. Then $\rho_1(A^*B)=\rho_2(A,BP)$, so by \eqref{Schwartz inequality}, $\rho_1(A^*B)\le\rho_2(A)\rho_2(BP)=\rho_2(A)\rho_2(B)$.
\end{proof}


\subsection{Roe's trace}

Recall that a discrete group $G$ is \emph{amenable} if there is a $G$-invariant positive linear map $\mu\colon\ell^\infty(G)\to\R$ satisfying $\mu(\mathbbm{1})=1$, called a $G$-invariant \emph{mean}. We have the following characterization of amenability.

\begin{proposition}
  \label{amenability}
  The following conditions are equivalent.

  \begin{enumerate}
    \item $G$ is amenable.

    \item $\ell^\infty(G)\ne\bar{\mathcal{I}}$.
  \end{enumerate}
\end{proposition}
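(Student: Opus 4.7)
For both directions, the plan is to exploit the tight connection between $G$-invariant means and the $G$-coinvariant quotient $\ell^\infty(G)/\bar{\mathcal{I}}$. The implication $(1)\Rightarrow(2)$ is direct, while $(2)\Rightarrow(1)$ I would establish by contrapositive using the Block--Weinberger theorem on uniformly finite homology already recalled in the introduction.

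For $(1)\Rightarrow(2)$, I would take a $G$-invariant mean $\mu$. Positivity together with $\mu(\mathbbm{1})=1$ gives the standard bound $|\mu(\phi)|\le\|\phi\|_\infty$, so $\mu$ is a bounded, hence weakly continuous, linear functional on $\ell^\infty(G)$. The invariance $\mu(g\cdot\phi)=\mu(\phi)$ shows that $\mu$ annihilates every generator $\phi-g\cdot\phi$ of $\mathcal{I}$, and therefore its weak closure $\bar{\mathcal{I}}$ by continuity. Since $\mu(\mathbbm{1})=1\ne 0$, this forces $\mathbbm{1}\notin\bar{\mathcal{I}}$, in particular $\bar{\mathcal{I}}\ne\ell^\infty(G)$.

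For $(2)\Rightarrow(1)$ I would argue the contrapositive: if $G$ is not amenable, then $\bar{\mathcal{I}}=\ell^\infty(G)$. As recalled in the introduction, the quotient $\ell^\infty(G)/\mathcal{I}$ is isomorphic to the $0$-th uniformly finite homology of $G$ defined by Block and Weinberger \cite{BW}, and their main result states that this homology group vanishes precisely when $G$ is non-amenable. Consequently, non-amenability of $G$ gives $\mathcal{I}=\ell^\infty(G)$, a fortiori $\bar{\mathcal{I}}=\ell^\infty(G)$, which contradicts (2).

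The main obstacle is the substantive direction $(2)\Rightarrow(1)$, whose content is entirely absorbed into the Block--Weinberger theorem; that theorem in turn rests on Tarski's paradoxical decomposition of a non-amenable group to present $\mathbbm{1}$ explicitly as a finite sum $\sum_i(\phi_i-g_i\cdot\phi_i)\in\mathcal{I}$. A self-contained alternative would apply Hahn--Banach to the proper closed subspace $\bar{\mathcal{I}}\subsetneq\ell^\infty(G)$ to produce a nonzero bounded functional $\lambda$ on $\ell^\infty(G)$ vanishing on $\bar{\mathcal{I}}$ (hence $G$-invariant), take its Jordan decomposition $\lambda=\lambda_+-\lambda_-$ in $\ell^\infty(G)^*$ whose components inherit $G$-invariance by uniqueness, and normalize a nonzero positive component by its value at $\mathbbm{1}$---which must be nonzero, since a positive linear functional on $\ell^\infty(G)$ vanishing at $\mathbbm{1}$ vanishes identically---to produce the desired invariant mean.
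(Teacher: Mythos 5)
Your proof is correct and follows essentially the same route as the paper: boundedness of an invariant mean annihilates $\bar{\mathcal{I}}$ but not $\mathbbm{1}$ for $(1)\Rightarrow(2)$, and the Block--Weinberger identity $\ell^\infty(G)=\mathcal{I}$ for non-amenable $G$ for $(2)\Rightarrow(1)$. The Hahn--Banach/Jordan-decomposition alternative you sketch for the substantive direction is a sound classical argument, but it is an aside; your primary proof matches the paper's.
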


\begin{proof}
  Let $\mu\colon\ell^\infty(G)\to\R$ be a $G$-invariant mean. Then for $\phi\in\ell^\infty(G)$,
  \[
    |\mu(\phi)|\le|\mu(\sup_{x\in G}|\phi(x)|\mathbbm{1})|=\sup_{x\in G}|\phi(x)|
  \]
  and then $\mu$ is bounded, implying $\mu(\bar{\mathcal{I}})=0$. Thus by $\mu(\mathbbm{1})=1$, (1) implies (2). By \cite{BW,BNW}, if $G$ is not amenable then $\ell^\infty(G)=\mathcal{I}$. Hence (2) implies (1).
\end{proof}

We compare the piecewise trace with Roe's trace \cite{R1} when $G$ is amenable. Recall that F\o lner's theorem also characterizes amenability, which states that for any $\epsilon>0$ and a nonempty finite subset $S\subset G$, there is a finite subset $F\subset G$ satisfying
\[
  \frac{|F\triangle gF|}{|F|}<\epsilon
\]
for any $g\in S$. For $k\ge 1$, we define $S_k=\{g\in G\mid d(K,gK)\le k\}$. Then we get a sequence $S_1\subset S_2\subset\cdots$ of finite subsets of $G$ which exhausts $G$. By F\o lner's theorem, there is a finite subset $F_k$ of $G$ satisfying
\[
  \frac{|F_k\triangle gF_k|}{|F_k|}<\frac{1}{k}
\]
for any $g\in S_k$. Let $M_k$ be the union of all $gK$ for $g\in F_k$. Then as is shown in \cite[p.106]{R1}, $\{M_k\}_{k\ge 1}$ is a regular exhaustion of $M$, hence by \cite[(3.5)]{R3}, Roe's trace of a suitable smoothing operator $A\colon L^2(E)\to L^2(E)$ is given by
\[
  \tau(A)=\lim_{k\to\infty}\frac{1}{\vol(M_k)}\int_{M_k}\mathrm{tr}(k_A(x,x))\vol(x).
\]
On the other hand, we can define a $G$-invariant finitely additive probability measure on $G$ by
\[
  \mu(S)=\lim_\omega\frac{|F_k\cap S|}{|F_k|}
\]
for $S\subset G$, where $\omega$ is a nonprincipal ultrafilter on $\N$, which yields a $G$-invariant mean $\mu\colon\ell^\infty(G)\to\R$. Thus we can easily deduce the following, where $\vol(K)=\vol(M/G)$.

\begin{proposition}
  \label{disassmebly}
  Let $A\colon L^2(E)\to L^2(E)$ be a piecewise trace-class operator, and let $\mu\colon\ell^\infty(G)\to\R$ be the above $G$-invariant mean. Then
  \[
    \mu(\Tr(A))=\frac{\tau(A)}{\vol(M/G)}.
  \]
\end{proposition}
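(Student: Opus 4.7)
The plan is a direct chain of substitutions, exactly the one the authors signal by calling the deduction ``easy''. The inputs are three explicit formulas: the definition of $\mu$ as the ultrafilter average along the F\o lner sets $F_k$; the tilewise integral expression $\Tr(A)(g)=\int_{gK}\mathrm{tr}(k_A(x,x))\vol(x)$ provided by Lemma \ref{trace integral}; and the integral formula for Roe's trace $\tau(A)$ quoted above the statement. The strategy is to unwind $\mu(\Tr(A))$ step by step until Roe's average reappears.

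First I would invoke Lemma \ref{trace integral}, noting that this is permissible because Roe's trace $\tau(A)$ is defined only once $A$ is smoothing, so for operators to which the proposition actually applies the kernel $k_A$ is smooth. Next I would use the key geometric fact built into the construction of $F_k$, namely $M_k=\coprod_{g\in F_k}gK$, to combine the tilewise integrals into a single integral:
\[
  \sum_{g\in F_k}\Tr(A)(g)=\int_{M_k}\mathrm{tr}(k_A(x,x))\vol(x).
\]

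Then I would apply the definition $\mu(\phi)=\lim_\omega|F_k|^{-1}\sum_{g\in F_k}\phi(g)$ to $\phi=\Tr(A)$ and substitute the above identity, obtaining
\[
  \mu(\Tr(A))=\lim_\omega\frac{1}{|F_k|}\int_{M_k}\mathrm{tr}(k_A(x,x))\vol(x).
\]
The remaining step is a volume comparison: since $K$ is a fundamental domain and $M_k$ is a disjoint union of its translates, $\vol(M_k)=|F_k|\vol(K)=|F_k|\vol(M/G)$. Substituting $|F_k|^{-1}=\vol(M/G)\cdot\vol(M_k)^{-1}$ into the limit and recognizing the resulting integral average as the defining expression for $\tau(A)$ yields the proposition after a single rearrangement; the ultrafilter limit agrees with the ordinary one because the defining limit for $\tau(A)$ converges.

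There is no real obstacle, but two minor points deserve attention. First, the proposition is most naturally read on the overlap of the two frameworks: only piecewise trace-class operators with smooth kernel are covered by Lemma \ref{trace integral}, and only these have a well-defined Roe trace, so this intersection is where the identity genuinely lives. Second, the passage from $\lim_\omega$ to $\lim_k$ is legitimate precisely because $\tau$ is already an ordinary limit over the regular exhaustion $\{M_k\}$, and every nonprincipal ultrafilter respects ordinary limits.
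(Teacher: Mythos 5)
Your substitution chain is exactly the deduction the paper signals by calling it ``easy'': apply Lemma \ref{trace integral} to get $\Tr(A)(g)=\int_{gK}\mathrm{tr}(k_A(x,x))\vol(x)$, sum over $g\in F_k$ to collapse the tilewise integrals into $\int_{M_k}$, feed that into the ultrafilter average defining $\mu$, and convert $|F_k|^{-1}$ to $\vol(M/G)\vol(M_k)^{-1}$ using that $M_k$ is a disjoint union of $|F_k|$ translates of $K$. The paper offers no written proof, so there is nothing to diverge from; your route is the natural one, and your two side remarks (smoothness is needed for Lemma \ref{trace integral} and for Roe's $\tau$ to be defined; the ultrafilter limit agrees with the ordinary one since the defining limit for $\tau$ converges) are both correct and worth saying.

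One point you should not wave through, however, is the final ``single rearrangement.'' Carrying out your own substitution honestly gives
\[
  \mu(\Tr(A))=\lim_\omega\frac{1}{|F_k|}\int_{M_k}\mathrm{tr}(k_A(x,x))\vol(x)
  =\lim_\omega\frac{\vol(M/G)}{\vol(M_k)}\int_{M_k}\mathrm{tr}(k_A(x,x))\vol(x)
  =\vol(M/G)\,\tau(A),
\]
which is the \emph{reciprocal} of the displayed normalization $\tau(A)/\vol(M/G)$. A quick sanity check with $A=\mathrm{id}$ on a rank-$r$ bundle bears this out: $\Tr(\mathrm{id})$ is the constant function $r\vol(M/G)$, so $\mu(\Tr(\mathrm{id}))=r\vol(M/G)$, while $\tau(\mathrm{id})=r$; these match $\mu(\Tr(A))=\vol(M/G)\tau(A)$ and not the printed formula unless $\vol(M/G)=1$. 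Your argument is right; it just does not land on the stated identity, so rather than asserting that the substitution ``yields the proposition,'' you should flag that the $\vol(M/G)$ factor in the statement appears to be on the wrong side (or that a normalization convention for $\tau$ different from the one quoted from \cite[(3.5)]{R3} is tacitly in force).
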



\section{Operator with Gaussian propagation}\label{Operator with Gaussian propagation}

In this section, we define an operator with Gaussian propagation, and show the trace property of the piecewise trace of the product of operators with Gaussian propagation. We also investigate functional calculus of a generalized Dirac operator that produces operators with Gaussian propagation.


\subsection{Trace property}

Let $E\to M$ be a $G$-invariant vector bundle with $G$-invariant metric.

\begin{definition}
  A smoothing operator $A\colon L^2(E)\to L^2(E)$ has \emph{Gaussian propagation} if there are $C_1,C_2>0$ such that
  \[
    |k_A(x,y)|\le C_1e^{-C_2d(x,y)^2}.
  \]
\end{definition}

Since the group $G$ is the quotient of the fundamental group of a compact manifold $M/G$, it is finitely generated. We choose a finite generating set of $G$, and consider the associated word metric on $G$, where those metrics are mutually quasi-isometric. Now we take any $g,h\in G$. By the fundamental observation in geometric group theory, an inclusion $G\to M$ is a quasi-isometry, so there are $C_1,C_2,C_3,C_4>0$ which are independent of the choice of $g,h$ such that for any $x\in gK,\,y\in hK$, we have
\begin{equation}
  \label{quasi-isometry}
  C_1e^{-C_2d(g,h)^2}\le e^{-d(x,y)^2}\le C_3e^{-C_4d(g,h)^2}
\end{equation}
For $m\ge 0$, let $G_m=\{g\in G\mid d(g,1)\le m\}$. Then we get a sequence of subsets of $G$
\[
  G_0\subset\cdots\subset G_m\subset G_{m+1}\subset\cdots
\]
which exhausts $G$. As in \cite{Mi}, there is $C>0$ such that
\begin{equation}
  \label{growth}
  |G_m|\le e^{Cm}.
\end{equation}

Let $\Gau(E)$ denote the set of smoothing operators on $L^2(E)$ with Gaussian propagation.

\begin{proposition}
  \label{algebra}
  The set of operators $\Gau(E)$ is a $*$-algebra.
\end{proposition}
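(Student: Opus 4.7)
The plan is to verify the four $*$-algebra closure conditions in turn. Closure under sums, scalar multiples and adjoints is routine: if $A, B \in \Gau(E)$ have Gaussian bounds $C_A e^{-D_A d(x,y)^2}$ and $C_B e^{-D_B d(x,y)^2}$, then $|k_{A+B}| \le (C_A+C_B) e^{-\min(D_A,D_B) d(x,y)^2}$; scalar multiples are immediate; and the kernel of the adjoint is $k_{A^*}(x,y) = k_A(y,x)^*$, whose Gaussian bound is symmetric in $(x,y)$.

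The substantive step is closure under composition. For $A, B \in \Gau(E)$ the composite has kernel $k_{AB}(x,y) = \int_M k_A(x,z)\,k_B(z,y)\,\vol(z)$. Setting $\alpha = \min(D_A,D_B)$ and using $d(x,z)^2+d(z,y)^2 \ge \tfrac{1}{2}d(x,y)^2$ (which follows from $2(a^2+b^2)\ge(a+b)^2$ and the triangle inequality), I would split the exponent in half to extract the $d(x,y)$-Gaussian:
$$|k_A(x,z)||k_B(z,y)| \le C_A C_B \, e^{-\frac{\alpha}{4}d(x,y)^2}\,e^{-\frac{\alpha}{2}(d(x,z)^2+d(z,y)^2)}.$$
Pulling the first factor outside the integral reduces the problem to the uniform estimate $\sup_{x\in M}\int_M e^{-\beta d(x,z)^2}\vol(z)<\infty$ for every $\beta>0$.

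The main obstacle will be this uniform integrability bound on the noncompact manifold $M$. I would prove it using the tiling \eqref{tiling}: for $x\in hK$, decompose
$$\int_M e^{-\beta d(x,z)^2}\vol(z) = \sum_{g\in G}\int_{gK}e^{-\beta d(x,z)^2}\vol(z),$$
and bound each summand using the quasi-isometry \eqref{quasi-isometry}, which transforms the manifold Gaussian into a Gaussian in the word distance $d(g,h)$ on $G$. Grouping by word length and applying the exponential growth bound \eqref{growth} gives a majorant of the form $\vol(K) \sum_m e^{Cm-\beta' m^2}$, which converges because the Gaussian eventually dominates the exponential. The estimate is uniform in $x$ by $G$-invariance of the metric.

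Finally, smoothness of $k_{AB}$ follows by differentiation under the integral sign, justified by dominated convergence using the same Gaussian majorants on any compact $(x,y)$-set together with the smoothness of $k_A$ and $k_B$.
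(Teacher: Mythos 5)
Your proof is correct and reaches the same conclusion by essentially the same estimates (tiling of $M$ by translates of $K$, the quasi-isometry \eqref{quasi-isometry}, and the exponential growth bound \eqref{growth}), but the bookkeeping is organized differently in a way worth noting. The paper fixes $g_1,g_2\in G$ with $d(g_1,g_2)=l$, expands $\int_M$ as a sum over shells $G_m$, and bounds the resulting double sum by completing the square in $m$, which yields the Gaussian decay in $l$ at the end. You instead extract the Gaussian in $d(x,y)$ \emph{up front} via the elementary inequality $d(x,z)^2+d(z,y)^2\ge\tfrac12 d(x,y)^2$, which reduces the whole product estimate to the single uniform integrability bound $\sup_{x\in M}\int_M e^{-\beta d(x,z)^2}\,\vol(z)<\infty$. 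This is a genuine simplification: the latter bound is exactly what the paper needs again in the proof of Proposition \ref{Gaussian HS} (piecewise Hilbert--Schmidt), so your route factors out a reusable lemma rather than repeating the shell computation. One small caution: your Gaussian bound for $k_{AB}$ has the exponent $-\tfrac{\alpha}{4}d(x,y)^2$ with $\alpha=\min(D_A,D_B)$, which is fine, but you should say explicitly that the remaining integrand $e^{-\frac{\alpha}{2}(d(x,z)^2+d(z,y)^2)}$ is then bounded by $e^{-\frac{\alpha}{2}d(x,z)^2}$ before invoking the uniform integrability; otherwise the reduction is not quite literal. The smoothness claim at the end (differentiation under the integral using the same Gaussian majorants) is also needed and correctly identified, though the paper leaves it implicit.
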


\begin{proof}
  If $A$ is a smoothing operator, then $A^*$ is also a smoothing operator with $k_{A^*}(x,y)=k_A(y,x)^*$. Hence $\Gau(E)$ is closed under $*$. Clearly, $\Gau(E)$ is closed under linear combinations, so it remains to show that $\Gau(E)$ is closed under products. Let $A,B\in\Gau(E)$, and let $g_1,g_2\in G$ with $d(g_1,g_2)=l$. Take any $x_1\in g_1K$ and $x_2\in g_2K$. Then
  \begin{align*}
    |k_{AB}(x_1,x_2)|&\le\int_M|k_A(x_1,y)k_B(y,x_2)|\vol(y)\\
    &\le\sum_{m=0}^\infty\sum_{g\in G_m}\int_{gg_1K}|k_A(x_1,y)k_B(y,x_2)|\vol(y)\\
    &\le\sum_{m=0}^\infty\sum_{g\in G_m}\int_{gg_1K}C_1e^{-C_2(d(x_1,y)^2+d(y,x_2)^2)}\vol(y)
  \end{align*}
  for some $C_1,C_2>0$ which are independent of the choice of $g_1,g_2,x_1,x_2$. By \eqref{quasi-isometry} and \eqref{growth},
  \begin{align*}
    &\sum_{m=0}^\infty\sum_{g\in G_m}\int_{gg_1K}C_1e^{-C_2(d(x_1,y)^2+d(y,x_2)^2)}\vol(y)\\
    &\le\sum_{m=0}^\infty\sum_{g\in G_m}C_1e^{-C_3(m^2+(m-l)^2)}\\
    &\le e^{-C_3l^2}\sum_{m=0}^\infty C_1e^{-2C_3(m^2-lm)+C_4m}\\
    &\le C_5e^{-C_3l^2}
  \end{align*}
  for some $C_3,C_4,C_5>0$ which are independent of the choice of $g_1,g_2,x_1,x_2$. Moreover, by \eqref{quasi-isometry},
  \[
    C_5e^{-C_3l^2}\le C_6e^{-C_7d(x_1,x_2)^2}.
  \]
  for some $C_6,C_7>0$ which are independent of the choice of $g_1,g_2,x_1,x_2$. Then we obtain $|k_{AB}(x_1,x_2)|\le C_6e^{-C_7d(x_1,x_2)^2}$. Thus $AB$ belongs to $\Gau(E)$, completing the proof.
\end{proof}

\begin{proposition}
  \label{Gaussian HS}
   Elements of $\Gau(E)$ are piecewise Hilbert-Schmidt.
\end{proposition}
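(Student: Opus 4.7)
The plan is to use Lemma \ref{HS norm integral} to express $\rho_2(A)(g)^2$ as a double integral of $|k_A(x,y)|^2$, and then use the same tiling-plus-quasi-isometry argument as in the proof of Proposition \ref{algebra} to bound it uniformly in $g$.

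Concretely, fix $A \in \Gau(E)$ and $g \in G$. By Lemma \ref{HS norm integral},
\[
  \rho_2(A)(g)^2 = \int_M \int_{gK} |k_A(x,y)|^2 \vol(y)\vol(x).
\]
I would first decompose the outer integral over $M$ using the tiling $M=\coprod_{h\in G} hK$, so that
\[
  \rho_2(A)(g)^2 = \sum_{h\in G} \int_{hK}\int_{gK} |k_A(x,y)|^2 \vol(y)\vol(x).
\]

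Next, on each piece $hK \times gK$ I would apply the Gaussian propagation bound $|k_A(x,y)|^2 \le C_1^2 e^{-2C_2 d(x,y)^2}$ together with the quasi-isometry estimate \eqref{quasi-isometry} to replace $d(x,y)$ with $d(h,g)$; this gives constants $C', c' > 0$, independent of $g$ and $h$, such that $|k_A(x,y)|^2 \le C' e^{-c' d(h,g)^2}$ for all $x \in hK,\ y\in gK$. Since $\vol(hK) = \vol(gK) = \vol(K) < \infty$, each piece is bounded by $\vol(K)^2 C' e^{-c' d(h,g)^2}$, so
\[
  \rho_2(A)(g)^2 \le \vol(K)^2 C' \sum_{h\in G} e^{-c' d(h,g)^2}.
\]

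Finally, by left $G$-invariance of the metric the sum equals $\sum_{h\in G} e^{-c' d(h,1)^2}$, so it is independent of $g$. Grouping by the shells $G_m\setminus G_{m-1}$ and using the exponential growth bound \eqref{growth}, this becomes
\[
  \sum_{h\in G} e^{-c' d(h,1)^2} \le \sum_{m=0}^\infty e^{Cm} e^{-c' m^2},
\]
which converges since the Gaussian decay dominates any exponential. Therefore $\rho_2(A)$ is a well-defined bounded function, proving that $A$ is piecewise Hilbert-Schmidt. There is no real obstacle here beyond bookkeeping; the only subtlety is ensuring that the quasi-isometry conversion \eqref{quasi-isometry} and the growth bound \eqref{growth} can both be applied to produce constants independent of $g$, which is exactly the form in which they are stated.
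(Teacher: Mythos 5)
Your proposal is correct and follows essentially the same route as the paper: both start from the integral characterization of $\rho_2(A)$ in Lemma \ref{HS norm integral}, decompose $M$ via the tiling $M=\coprod_{h\in G}hK$, convert $d(x,y)$ to $d(h,g)$ using \eqref{quasi-isometry}, and conclude by the exponential growth bound \eqref{growth}. (The shell bound should technically use $d(h,1)>m-1$ rather than $\ge m$ for $h\in G_m\setminus G_{m-1}$, but this off-by-one does not affect convergence.)
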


\begin{proof}
  The proof of Lemma \ref{HS norm integral} implies that a smoothing operator $A\colon L^2(E)\to L^2(E)$ is piecewise Hilbert-Schmidt if and only if
  \begin{equation}
    \label{HS bounded}
    \sup_{g\in G}\int_M\int_{gK}|k_A(x,y)|^2\vol(y)\vol(x)<\infty.
  \end{equation}
   Now we let $A\in\Gau(E)$, and take any $h\in G$ and $y\in hK$. By \eqref{quasi-isometry},
  \[
    \int_M|k_A(x,y)|^2\vol(x)=\sum_{g\in G}\int_{gK}|k_A(x,y)|^2\vol(x)\le\sum_{g\in G}C_1e^{-C_2d(g,h)^2}
  \]
  for some $C_1,C_2>0$ which are independent of the choice of $h$ and $y$. By \eqref{growth}, we have
  \[
    \sum_{g\in G}C_1e^{-C_2d(g,h)^2}\le\sum_{m=0}^\infty\sum_{g\in G_m}C_1e^{-C_2d(gh,h)^2}\le\sum_{m=0}^\infty C_1e^{-C_2m^2+C_3m}<C_4.
  \]
  for some $C_3,C_4>0$ which are independent of the choice of $h$ and $y$. Then \eqref{HS bounded} holds, hence $A$ is piecewise Hilbert-Schmidt as stated.
\end{proof}

\begin{corollary}
  \label{Gaussian trace-class}
  If $A,B\in\Gau(E)$, then $AB$ is of piecewise trace-class.
\end{corollary}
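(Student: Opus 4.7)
The plan is to unpack the definition of piecewise trace-class and assemble the corollary directly from Propositions \ref{algebra} and \ref{Gaussian HS}. Recall that $AB$ is piecewise trace-class precisely when it can be written as $X^*Y$ for two piecewise Hilbert-Schmidt operators $X,Y\colon L^2(E)\to L^2(E)$, so the task reduces to producing such a factorization.

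First, I would use Proposition \ref{algebra}, which asserts that $\Gau(E)$ is closed under the $*$-operation, to conclude $A^*\in\Gau(E)$. This gives the trivial but critical identity $AB=(A^*)^*B$. Next, I would invoke Proposition \ref{Gaussian HS} twice: once to conclude that $A^*$ is piecewise Hilbert-Schmidt, and once for $B$. Setting $X=A^*$ and $Y=B$ then realizes $AB$ in the form $X^*Y$ with both factors piecewise Hilbert-Schmidt, which is exactly the definition of a piecewise trace-class operator.

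There is essentially no obstacle here because both ingredients have already been established in the preceding propositions; the only subtle point is remembering that the definition of piecewise trace-class requires a factorization through the adjoint of the first factor, which is why one needs closure of $\Gau(E)$ under $*$ rather than merely under products. In particular, the piecewise trace of $AB$ is then unambiguously given by $\Tr(AB)=\rho_2(A^*,B)$ via Lemma \ref{HS inner product}, which will be useful in later sections.
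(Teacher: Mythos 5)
Your proof is correct and matches the paper's argument essentially verbatim: both apply Proposition \ref{algebra} to get $A^*\in\Gau(E)$, then Proposition \ref{Gaussian HS} to see that $A^*$ and $B$ are piecewise Hilbert-Schmidt, and conclude via the factorization $AB=(A^*)^*B$. Nothing is missing.
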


\begin{proof}
  If $A,B\in\Gau(E)$, then by Propositions \ref{algebra} and \ref{Gaussian HS}, $A^*$ and $B$ are piecewise Hilbert-Schmidt operators. Hence $AB=(A^*)^*B$ is of piecewise trace-class.
\end{proof}

We consider a function $F\colon G\times G\to\R$ such that there are $C_1,C_2>0$ satisfying
\begin{equation}
  \label{propagation F}
  |F(g,h)|\le C_1e^{-C_2d(g,h)^2}
\end{equation}
for all $g,h\in G$.

\begin{lemma}
  \label{sum F}
  There is $C>0$ such that for any $g\in G$, we have
  \[
    \sum_{h\in G}|F(h,g)|\le C\quad\text{and}\quad\sum_{h\in G}|F(g,h)|\le C.
  \]
\end{lemma}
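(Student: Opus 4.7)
The plan is to fix $g\in G$ and stratify the sum over $h$ by the distance $d(g,h)$, then use the Gaussian decay in \eqref{propagation F} against the exponential growth bound \eqref{growth} for balls in $G$. The key observation is that, since the word metric on $G$ is left-invariant, the cardinality of $\{h\in G\mid d(g,h)=m\}$ equals $|\{h'\in G\mid d(1,h')=m\}|\le|G_m|\le e^{Cm}$, so the bound is uniform in $g$.

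More concretely, for the second sum I write
\[
  \sum_{h\in G}|F(g,h)|\le\sum_{m=0}^\infty\sum_{h\in gG_m\setminus gG_{m-1}}C_1e^{-C_2d(g,h)^2}\le\sum_{m=0}^\infty C_1|G_m|e^{-C_2(m-1)^2}
\]
(interpreting $G_{-1}=\emptyset$), and by \eqref{growth} this is bounded by $\sum_{m=0}^\infty C_1e^{Cm}e^{-C_2(m-1)^2}$. Since the Gaussian exponent dominates the linear exponent for large $m$, this series converges to a finite constant independent of $g$. The first sum is handled identically by swapping the roles of the two arguments (or by applying the same argument with $g$ and $h$ interchanged, using the symmetry of $d$).

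The only mildly delicate point is keeping the bound independent of $g$, but this is automatic from the left-invariance of the word metric: translation by $g^{-1}$ is a bijection of $G$ that preserves distances, so the growth estimate \eqref{growth} holds equally for balls centered at any point. Taking $C$ to be the resulting finite sum completes the proof. There is no real obstacle here; this is a standard "Gaussian beats exponential growth" estimate, analogous to the ones already carried out in the proofs of Propositions \ref{algebra} and \ref{Gaussian HS}.
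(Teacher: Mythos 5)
Your proof is correct and follows essentially the same route as the paper: both stratify the sum over $h$ by distance from $g$ using annuli of the form $G_m\setminus G_{m-1}$ (the paper writes this via translates $hg$), invoke translation invariance of the word metric to bound annulus cardinalities uniformly in $g$ via \eqref{growth}, and let the Gaussian decay in \eqref{propagation F} beat the exponential growth. The only cosmetic difference is your slightly lossier exponent $(m-1)^2$ in place of $m^2$, which does not affect convergence.
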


\begin{proof}
  By \eqref{growth}, there is $C_3>0$ such that for any $m\ge 0$ and $g\in G$,
  \[
    \sum_{h\in G_m}|F(hg,g)|=\sum_{k=0}^m\sum_{h\in G_k-G_{k-1}}|F(hg,g)|\le\sum_{k=0}^mC_1e^{-C_2k^2+C_3k}.
  \]
  Clearly, the last term is bounded, and then by taking $m\to\infty$, the first inequality is obtained. By symmetry, the second inequality is also obtained.
\end{proof}

By Lemma \ref{sum F}, we can define bounded functions $F_1,F_2\colon G\to\R$ by
\[
  F_1(g)=\sum_{h\in G}F(h,g)\quad\text{and}\quad F_2(g)=\sum_{h\in G}F(g,h).
\]
Let $\widehat{\mathcal{I}}$ denote the closure of $\mathcal{I}$ by the $\ell^\infty$-norm topology. Note that $\widehat{\mathcal{I}}\subset\bar{\mathcal{I}}$.

\begin{lemma}
  \label{F_1=F_2}
  $F_1\equiv F_2\mod\widehat{\mathcal{I}}$.
\end{lemma}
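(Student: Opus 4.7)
The plan is to exhibit $F_2 - F_1$ as an $\ell^\infty$-norm convergent series of elements of $\mathcal{I}$ of the generating form $\phi - g \cdot \phi$, so that the total sum lies in $\widehat{\mathcal{I}}$. The first step is to reindex both sums via the bijection $h \mapsto g h_0$ (with $h_0 = g^{-1}h$), rewriting
\[
F_1(g) = \sum_{h_0 \in G} F(gh_0, g), \qquad F_2(g) = \sum_{h_0 \in G} F(g, gh_0).
\]

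For each $h_0 \in G$, I would set $\tilde\phi_{h_0}(g) := F(g, gh_0)$. Since $d(g, gh_0) = d(1, h_0)$ for the $G$-invariant word metric, \eqref{propagation F} gives $\|\tilde\phi_{h_0}\|_\infty \le C_1 e^{-C_2 d(h_0,1)^2}$, so $\tilde\phi_{h_0} \in \ell^\infty(G)$. Using the convention $(g \cdot \phi)(x) = \phi(xg)$, a direct computation yields
\[
(h_0^{-1} \cdot \tilde\phi_{h_0})(g) = \tilde\phi_{h_0}(g h_0^{-1}) = F(g h_0^{-1}, g h_0^{-1} h_0) = F(g h_0^{-1}, g),
\]
so the element $\tilde\phi_{h_0} - h_0^{-1} \cdot \tilde\phi_{h_0} \in \mathcal{I}$ has sup-norm at most $2 C_1 e^{-C_2 d(h_0,1)^2}$.

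By the exponential growth bound \eqref{growth}, the majorant $\sum_{h_0 \in G} 2 C_1 e^{-C_2 d(h_0,1)^2}$ converges (essentially the argument of Lemma \ref{sum F}), so the series $\sum_{h_0 \in G}(\tilde\phi_{h_0} - h_0^{-1} \cdot \tilde\phi_{h_0})$ converges absolutely in $\ell^\infty(G)$. Its partial sums lie in $\mathcal{I}$, hence the limit lies in $\widehat{\mathcal{I}}$. To identify this limit with $F_2 - F_1$, I would take finite exhaustions of $G$ and apply the bijection $h_0 \mapsto h_0^{-1}$ to the second piece, giving $\sum_{h_0} F(g h_0^{-1}, g) = \sum_{h_0'} F(g h_0', g) = F_1(g)$, so the limit is $F_2(g) - F_1(g)$.

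There is no real obstacle; the only subtle point is the bookkeeping with the right $G$-action and the observation that the strong Gaussian decay (together with the subexponential volume growth of balls in $G$) is exactly what is needed to upgrade $\mathcal{I}$-membership of partial sums to $\widehat{\mathcal{I}}$-membership of the limit (rather than merely weak closure $\bar{\mathcal{I}}$).
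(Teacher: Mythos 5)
Your proposal is correct and takes essentially the same approach as the paper: both decompose $F_2-F_1$ as a uniformly (norm) convergent series whose partial sums are finite linear combinations of generators $\phi-g\cdot\phi$ of $\mathcal{I}$, with uniform convergence coming from the Gaussian decay of $F$ together with the exponential volume growth bound \eqref{growth}. Your reindexing $\tilde\phi_{h_0}(g)=F(g,gh_0)$ and the observation $\sum_{h_0}(h_0^{-1}\cdot\tilde\phi_{h_0})=F_1$ is a clean way to package the same partial-sum argument the paper runs over the exhaustion by balls $G_m$.
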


\begin{proof}
  For $m\ge 0$ and $g\in G$, let
  \[
    F^m_1(g)=\sum_{h\in G_m}F(g,hg)\quad\text{and}\quad F_2^m(g)=\sum_{h\in G_m}F(h^{-1}g,g).
  \]
  By Lemma \ref{sum F}, $F_1^m$ and $F_2^m$ are bounded functions on $G$. By \eqref{growth},
  \[
    |F_1(g)-F_1^m(g)|\le\sum_{k=m}^\infty\sum_{h\in G_k-G_{k-1}}|F(g,hg)|\le\sum_{k=m}^\infty C_1e^{-C_2k^2+C_3k}\le C_4e^{-C_5m^2}
  \]
  for some $C_3,C_4,C_5>0$ which are independet of the choice of $m$ and $g$. Then $F_1^m$ converges uniformly to $F_1$ as $m\to\infty$. By symmetry, $F_2^m$ also converges uniformly to $F_2$ as $m\to\infty$. For $g\in G$, we define a function $\phi_g\colon G\to\R$ by $\phi_g(h)=F(g,gh)$. By \eqref{propagation F}, $\phi_g$ is bounded. We set
  \[
    \Phi_m(h)=\sum_{g\in G_m}(1-h^{-1})\phi_g(h).
  \]
  Then $\Phi_m\in\mathcal{I}$ and $F_2^m-F_1^m=\Phi_m$. Then $\Phi_m$ converges uniformly to $F_2-F_1$ as $m\to\infty$, implying $F_1-F_2\in\widehat{\mathcal{I}}$.
\end{proof}

\begin{proposition}
  \label{trace property}
  For $A,B\in\Gau(S)$, we have
  \[
    \Tr(AB)\equiv\Tr(BA)\mod\widehat{\mathcal{I}}.
  \]
\end{proposition}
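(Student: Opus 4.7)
The plan is to exhibit $\Tr(AB)$ and $\Tr(BA)$ as the two summation functions $F_2$ and $F_1$ associated to a single bivariate function $F\colon G\times G\to\R$ with Gaussian decay, and then invoke Lemma \ref{F_1=F_2}. By Corollary \ref{Gaussian trace-class}, both $AB$ and $BA$ are piecewise trace-class smoothing operators, so Lemma \ref{trace integral} applies and the piecewise traces can be computed as integrals of the pointwise trace of the kernel on the diagonal.

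First, I would define
\[
  F(g,h)=\int_{gK}\int_{hK}\mathrm{tr}(k_A(x,y)k_B(y,x))\,\vol(y)\vol(x).
\]
Starting from $k_{AB}(x,x)=\int_M k_A(x,y)k_B(y,x)\,\vol(y)$ and decomposing the inner integral along the tiling $M=\coprod_{h\in G}hK$, Lemma \ref{trace integral} gives
\[
  \Tr(AB)(g)=\sum_{h\in G}F(g,h)=F_2(g).
\]
For $\Tr(BA)$, the same computation yields $\int_{gK}\int_{hK}\mathrm{tr}(k_B(x,y)k_A(y,x))\,\vol(y)\vol(x)$; applying pointwise cyclicity of the matrix trace $\mathrm{tr}(k_B(x,y)k_A(y,x))=\mathrm{tr}(k_A(y,x)k_B(x,y))$ and relabeling the integration variables $(x,y)\leftrightarrow(y,x)$ and summation indices $g\leftrightarrow h$, the $(g,h)$-block of $\Tr(BA)$ becomes $F(h,g)$, so
\[
  \Tr(BA)(g)=\sum_{h\in G}F(h,g)=F_1(g).
\]

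Next, I would verify that $F$ satisfies the Gaussian decay hypothesis \eqref{propagation F} of Lemma \ref{F_1=F_2}. Since $A,B\in\Gau(E)$, there exist constants so that $|k_A(x,y)k_B(y,x)|\le C\,e^{-C'd(x,y)^2}$ pointwise. For $x\in gK$ and $y\in hK$, the quasi-isometry estimate \eqref{quasi-isometry} upgrades this to $C''e^{-C'''d(g,h)^2}$ with constants independent of $g,h,x,y$. Integrating over $gK\times hK$, whose volume equals $\vol(K)^2$, yields
\[
  |F(g,h)|\le \vol(K)^2\cdot C''\,e^{-C'''d(g,h)^2},
\]
as required. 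Note also that Fubini is justified because Proposition \ref{Gaussian HS} together with the $\ell^2$-estimates already shows the absolute integrability of $k_A(x,y)k_B(y,x)$.

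Finally, applying Lemma \ref{F_1=F_2} to $F$ gives $F_1\equiv F_2\mod\widehat{\mathcal{I}}$, i.e.,
\[
  \Tr(AB)=F_2\equiv F_1=\Tr(BA)\mod\widehat{\mathcal{I}},
\]
which is the claim. There is no serious obstacle here: the infrastructure of Section 3 (Gaussian bounds, the quasi-isometry estimate \eqref{quasi-isometry}, the growth bound \eqref{growth}, and Lemma \ref{F_1=F_2}) was set up exactly to package such kernel computations; the only point requiring some care is checking that the cyclicity argument on kernels produces $F_1$ rather than something else, and this is simply a matter of carefully tracking which variable is integrated over $gK$ after the relabeling.
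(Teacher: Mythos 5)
Your proposal is correct and follows essentially the same route as the paper: define $F(g,h)$ as the double integral of $\mathrm{tr}(k_A(x,y)k_B(y,x))$ over $gK\times hK$, verify the Gaussian decay \eqref{propagation F} via \eqref{quasi-isometry}, identify the row and column sums of $F$ with $\Tr(BA)$ and $\Tr(AB)$ using Lemma \ref{trace integral}, and invoke Lemma \ref{F_1=F_2}. (If anything, you track the $F_1$ versus $F_2$ labels more carefully than the paper does, and your remark on absolute integrability is a small but welcome extra.)
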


\begin{proof}
  By Corollary \ref{Gaussian trace-class}, $AB$ and $BA$ are piecewise trace-class operators, so their piecewise traces are defined. We consider a function $F\colon G\times G\to\R$ given by
  \[
    F(g,h)=\int_{gK\times hK}\mathrm{tr}(k_A(x,y)k_B(y,x))\vol(x)\vol(y).
  \]
  Since $A$ and $B$ have Gaussian propagation, there are $C_1,C_2>0$ such that for any $g,h\in G$,
  \begin{align*}
    |F(g,h)|&\le\int_{gK\times hK}|\mathrm{tr}(k_A(x,y)k_B(y,x))|\vol(x)\vol(y)\\
    &\le\int_{gK\times hK}C_1e^{-C_2d(x,y)^2}\vol(x)\vol(y).
  \end{align*}
  Then by \eqref{quasi-isometry}, the function $F$ satisfies \eqref{propagation F}. By Lemma \ref{trace integral}, we have
  \begin{align*}
    F_1(g)&=\sum_{h\in G}F(h,g)\\
    &=\int_{gK}\mathrm{tr}\left(\int_Mk_A(x,y)k_B(y,x)\vol(y)\right)\vol(x)\\
    &=\int_{gK}\mathrm{tr}(k_{AB}(x,x))\vol(x)\\
    &=\Tr(AB)(g)
  \end{align*}
  for any $g\in G$, implying $F_1=\Tr(AB)$. Quite similarly, we can see that $F_2=\Tr(BA)$. Thus by Lemma \ref{F_1=F_2}, we obtain $\Tr(AB)\equiv\Tr(BA)\mod\widehat{\mathcal{I}}$.
\end{proof}


\subsection{Functional calculus}

In the sequel, let $S\to M$ be a $G$-invariant Clifford bundle with Dirac operator $D$. Then $S$ has bounded geometry in the sense of \cite[p. 93]{R1}, and the Dirac operator $D$ is $G$-invariant. We consider a generalized Dirac operator
\[
  \widetilde{D}=D+A
\]
where $A$ is a self-adjoint endomorphism of $S$, not necessarily $G$-invariant. For the rest of this section, we assume that $\nabla^iA$ is bounded for $i=0,1,\ldots,\lceil\frac{n}{2}\rceil-1$. Let $C_c^\infty(S)$ denote the set of compactly supported smooth sections of $S$.

\begin{lemma}
  \label{D estimate}
  There is $C_k>0$ satisfying an ineuality
  \[
    \sum_{i=0}^k\|D^iu\|^2\le C_k\sum_{i=0}^k\|\widetilde{D}^iu\|^2
  \]
  for $k=0,1,\ldots\lceil\frac{n}{2}\rceil$ and any $u\in C^\infty_c(S)$.
\end{lemma}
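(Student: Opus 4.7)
The plan is to proceed by induction on $k$, with $C_0=1$ for the trivial base case. Assume the inequality holds with constant $C_{k-1}$ for all $u\in C_c^\infty(S)$; we derive it for $k\le\lceil\frac{n}{2}\rceil$.

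Using $D=\widetilde{D}-A$, write
\[
  D^k u = D^{k-1}\widetilde{D}u \;-\; A\,D^{k-1}u \;-\; [D^{k-1},A]u.
\]
Applied to the compactly supported section $\widetilde{D}u$, the induction hypothesis gives $\|D^{k-1}\widetilde{D}u\|^2 \le C_{k-1}\sum_{i=1}^{k}\|\widetilde{D}^{i}u\|^2$. The second term is controlled by $\|A\|_\infty\|D^{k-1}u\|$, which is again bounded via the induction hypothesis.

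The heart of the argument is controlling the commutator term $[D^{k-1},A]u$. By induction on $j$, using the local identity $[D,A]u=\sum c(e_i)(\nabla_{e_i}A)u+\sum[c(e_i),A]\nabla_{e_i}u$ in an orthonormal frame and the recursion $[D^{j+1},A]=D\,[D^{j},A]+[D,A]\,D^{j}$, one checks that $[D^{j},A]$ is a differential operator of order at most $j$ whose coefficients are built from $A,\nabla A,\ldots,\nabla^{j}A$ and bounded Clifford actions, and so are uniformly bounded under the hypothesis of the lemma. Since $D$ is a first-order uniformly elliptic operator on the bounded-geometry manifold $M$, the standard elliptic (G\aa{}rding) estimate yields $\|u\|_{H^{k-1}}^{2}\le C\sum_{i=0}^{k-1}\|D^{i}u\|^{2}$ for $u\in C_c^\infty(S)$; hence $\|[D^{k-1},A]u\|\le C'\sum_{i=0}^{k-1}\|D^{i}u\|$, which by the induction hypothesis is bounded by a constant times $\bigl(\sum_{i=0}^{k-1}\|\widetilde{D}^{i}u\|^2\bigr)^{1/2}$.

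Summing the three contributions gives $\|D^ku\|^2\le C\sum_{i=0}^{k}\|\widetilde{D}^{i}u\|^2$, and combining with the induction hypothesis for the lower-order terms yields the desired estimate with a suitable $C_k$. The main obstacle is the commutator bookkeeping: since $A$ need not commute with Clifford multiplication, $[D,A]$ has a genuine first-order part, so the orders of the iterated commutators $[D^{j},A]$ do not drop by one as they would for scalar perturbations. Handling these first-order contributions is precisely why the hypothesis requires $\nabla^{i}A$ bounded through $i=\lceil\frac{n}{2}\rceil-1$ and why an elliptic estimate on bounded geometry enters the proof.
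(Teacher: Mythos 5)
Your proof is correct and follows the same inductive strategy as the paper's: expand $D=\widetilde{D}-A$, apply the induction hypothesis to $\widetilde{D}u$, and control the remaining term using boundedness of $\nabla^iA$ together with the elliptic estimate on bounded geometry. The only cosmetic difference is that the paper bounds $\|D^{m-1}(Au)\|$ directly via the Sobolev norms $\sum_{i}\|\nabla^i(Au)\|$, while you isolate the commutator $[D^{k-1},A]$ and estimate it separately; both are the same Leibniz-rule bookkeeping in slightly different packaging.
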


\begin{proof}
  We induct on $k$. The $k=0$ case is trivial. We assume that the statement holds for $k\le m-1$. Then
  \begin{align*}
    \|D^mu\|&=\|D^{m-1}(\widetilde{D}-A)u\|\\
    &\le\|D^{m-1}\widetilde{D}u\|+\|D^{m-1}Au\|\\
    &\le C_1\left(\sum_{i=0}^{m-1}\|\widetilde{D}^{i+1}u\|^2\right)^\frac{1}{2}+\|D^{m-1}Au\|
  \end{align*}
  for some $C_1>0$ which is independent of the choice of $u$. We also have
  \[
    \|D^{m-1}Au\|^2\le C_2\sum_{i=0}^{m-1}\|\nabla^i(Au)\|^2\le C_3\sum_{i=0}^{m-1}\|\nabla^iu\|^2\le C_4\sum_{i=0}^{m-1}\|D^iu\|^2
  \]
  for some $C_2,C_3,C_4>0$ which are independent of the choice of $u$. Here, the first inequality follows as in \cite[page 73]{R4}, the second inequality follows from $\nabla(Au)=(\nabla A-A\nabla)u$ and the assumption that $\nabla^iA$ is bounded for $i=0,1,\ldots\lceil\frac{n}{2}\rceil-1$, and the third inequality follows from the elliptic estimate (cf. \cite[Proposition 5.16]{R4}). Then by the induction hypothesis, we obtain
  \[
    \|D^{m-1}Au\|^2\le C_5\sum_{i=0}^{m-1}\|\widetilde{D}^iu\|^2
  \]
  for some $C_5>0$ which is independent of the choice of $u$, and thus the statement follows.
\end{proof}

Let $\mathcal{G}$ denote the set of Schwartz-class functions $\phi\colon\R\to\R$ such that there are $C_1,C_2>0$ satisfying
\[
  \label{varphi}
  |\widehat{\phi}^{(i)}(x)|\le C_1e^{-C_2x^2}
\]
for $i=0,1,\ldots,2\lceil\frac{n}{2}\rceil$, where $\widehat{\phi}$ denotes the Fourier transform of $\phi$ and $f^{(i)}$ stands for the $i$-th derivative of a function $f\colon\R\to\R$.

\begin{proposition}
  The set of functions $\mathcal{G}$ is an algebra.
\end{proposition}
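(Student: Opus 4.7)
The plan is to verify the three closure properties: closure under scalar multiplication, closure under addition, and closure under products. Since $\mathcal{G}$ is defined as a subset of the Schwartz space $\mathcal{S}(\R)$, and $\mathcal{S}(\R)$ is already known to be an algebra (it is a vector space and is closed under pointwise products), the task reduces to showing that the Gaussian-type bound on the derivatives of the Fourier transform is preserved under these operations.

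Closure under scalar multiplication is immediate since $\widehat{c\phi}^{(i)} = c\widehat{\phi}^{(i)}$. For closure under addition, if $\phi,\psi\in\mathcal{G}$ with constants $(C_1,C_2)$ and $(C_1',C_2')$ respectively, then $\widehat{\phi+\psi}^{(i)}=\widehat{\phi}^{(i)}+\widehat{\psi}^{(i)}$, and the triangle inequality gives
\[
|\widehat{\phi+\psi}^{(i)}(x)|\le (C_1+C_1')e^{-\min(C_2,C_2')x^2},
\]
which is the desired bound.

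The substantive case is closure under products. I would use the convolution theorem $\widehat{\phi\psi}=\widehat{\phi}*\widehat{\psi}$ (with whatever normalization the authors prefer; any constants are harmless) together with the standard fact that differentiation commutes with convolution, so that
\[
(\widehat{\phi\psi})^{(i)}=\widehat{\phi}^{(i)}*\widehat{\psi}.
\]
Then the key estimate becomes
\[
|(\widehat{\phi\psi})^{(i)}(x)|\le C_1 C_1'\int_\R e^{-C_2(x-y)^2-C_2'y^2}\,dy,
\]
and completing the square in $y$ yields
\[
C_2(x-y)^2+C_2'y^2=(C_2+C_2')\Bigl(y-\tfrac{C_2 x}{C_2+C_2'}\Bigr)^2+\tfrac{C_2 C_2'}{C_2+C_2'}\,x^2,
\]
so that the integral equals $\sqrt{\pi/(C_2+C_2')}\,e^{-\frac{C_2 C_2'}{C_2+C_2'}x^2}$. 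This gives the required bound $|(\widehat{\phi\psi})^{(i)}(x)|\le C\,e^{-cx^2}$ uniformly in $i=0,1,\ldots,2\lceil n/2\rceil$.

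There is no real obstacle here; the proof is a routine computation built on the convolution theorem and the observation that convolutions of Gaussian-bounded functions remain Gaussian-bounded. The only mild care needed is to verify that the intermediate integrals converge (guaranteed by the Gaussian decay) and that one may differentiate under the integral sign (guaranteed by Schwartz-class decay of $\widehat{\phi}$ and $\widehat{\psi}$). Once these are in place, the closure of $\mathcal{G}$ under all three operations is immediate.
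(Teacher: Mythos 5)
Your proposal is correct and follows the same route as the paper: the key point is that $(\widehat{\phi\psi})^{(i)}=\widehat{\phi}^{(i)}*\widehat{\psi}$, and the convolution of functions with Gaussian bounds again has a Gaussian bound. You merely spell out the complete-the-square computation and the routine closures under scalar multiplication and addition, which the paper dismisses as obvious.
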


\begin{proof}
  The only nontrivial property is that $\mathcal{G}$ is closed under products. For Schwartz-class functions $\phi$ and $\psi$, we have $(\widehat{\phi\psi})^{(i)}=\widehat{\phi}^{(i)}*\widehat{\psi}$. Then since the convolution of Gaussian functions is a Gaussian function, $\mathcal{G}$ is closed under products.
\end{proof}

\begin{lemma}
  \label{essentially self-adjoint}
  A generalized Dirac operator $\widetilde{D}$ is essentially self-adjoint.
\end{lemma}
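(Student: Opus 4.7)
The plan is to reduce essential self-adjointness of $\widetilde{D}$ to that of $D$, exploiting that the perturbation $A$ acts as a bounded self-adjoint operator on $L^2(S)$. The hypothesis that $\nabla^i A$ is bounded for $i=0,\ldots,\lceil \frac{n}{2}\rceil-1$ includes in particular the case $i=0$, so $A$ is a bounded, fiberwise self-adjoint bundle endomorphism and hence defines a bounded self-adjoint operator on $L^2(S)$; this is the only feature of $A$ that essential self-adjointness needs, the higher-order regularity being reserved for Lemma \ref{D estimate} and the later heat-kernel estimates.

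Next, since the $G$-invariant metric on $M$ is the pullback of the Riemannian metric on the closed manifold $M/G$, the manifold $M$ is geodesically complete. By the classical theorem of Chernoff, which builds a bounded functional calculus for $D$ via finite propagation speed for the symmetric hyperbolic equation $\partial_t u = i Du$ on a complete manifold (cf.~\cite{R4}), the Dirac operator $D$ is essentially self-adjoint on $C_c^\infty(S)$; write $\overline{D}$ for its unique self-adjoint extension. The Kato--Rellich theorem, applied with relative $\overline{D}$-bound zero, then yields that $\overline{D}+A$ is self-adjoint on $\mathrm{Dom}(\overline{D})$.

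It remains to identify $\overline{D}+A$ with the closure of $\widetilde{D}|_{C_c^\infty(S)}$. This is immediate from boundedness of $A$: for $u_n\in C_c^\infty(S)$ with $u_n\to u$ in $L^2(S)$, the sequence $Au_n$ automatically converges to $Au$, so $\widetilde{D}u_n$ converges if and only if $Du_n$ does, and the two limits differ by $Au$. Thus the closure of $\widetilde{D}|_{C_c^\infty(S)}$ coincides with $\overline{D}+A$ on $\mathrm{Dom}(\overline{D})$, which is self-adjoint, proving essential self-adjointness. I do not expect any substantive obstacle; the argument is routine once completeness of $M$ and boundedness of $A$ are in hand, and the analogous statement for $\widetilde{D}^2$ (needed for functional calculus in the next subsection) will then follow from the spectral theorem applied to $\overline{D}+A$.
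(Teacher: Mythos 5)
Your argument is correct and follows essentially the same route as the paper: establish essential self-adjointness of $D$ on the complete manifold $M$ (the paper cites Gromov--Lawson rather than Chernoff, but these are interchangeable here), then observe that $A$ is a bounded self-adjoint operator so the perturbation $\widetilde{D}=D+A$ remains essentially self-adjoint. You simply make explicit the Kato--Rellich step and the identification of closures that the paper leaves implicit.
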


\begin{proof}
  By \cite[Theorem 1.17]{GL}, $D$ is essentially self-adjoint. Then as $A$ is self-adjoint and continuous, $\widetilde{D}$ is essentially self-adjoint too.
\end{proof}

Lemma \ref{essentially self-adjoint} enables us to perform functional calculus for a generalized Dirac operator $\widetilde{D}$. We remark that all results in \cite[Chapter 9]{R4} holds not only for the Dirac operator $D$ but also for a generalized Dirac opertor $\widetilde{D}$ as mentioned in \cite[p. 95]{R4}. In particular, we have:

\begin{lemma}
  [cf. {\cite[Proposition 7.20]{R4}}]
  \label{finite propagation}
  For any $u\in C_c^\infty(S)$, $e^{\sqrt{-1}t\widetilde{D}}u$ is supported within $N(\mathrm{supp}(u);|t|)$.
\end{lemma}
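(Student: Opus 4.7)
The plan is to reduce the statement to the classical finite propagation speed argument of Chernoff--Roe for the ordinary Dirac operator $D$, using the fact that the perturbation $A$ is a self-adjoint bundle endomorphism, hence a zeroth-order operator that does not affect the principal symbol of $\widetilde{D}$. By Lemma \ref{essentially self-adjoint}, functional calculus yields a strongly continuous unitary group with $u_t := e^{\sqrt{-1}t\widetilde{D}}u$. Since $u\in C_c^\infty(S)\subset\bigcap_k\mathrm{Dom}(\widetilde{D}^k)$, standard semigroup theory combined with the elliptic estimate underlying Lemma \ref{D estimate} shows that $u_t$ is smooth jointly in $(t,x)$ and solves
\[
  \partial_t u_t=\sqrt{-1}\widetilde{D}u_t,\qquad u_0=u.
\]

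It suffices to show that $u_t$ vanishes on $B(x_0,r)$ whenever $d(x_0,\mathrm{supp}(u))>r+|t|$. Assume $t>0$ (the other case is symmetric), and consider the retracting-ball energy
\[
  E(s)=\int_{B(x_0,r+t-s)}|u_s(x)|^2\,\vol(x),\qquad 0\le s\le t.
\]
We have $E(0)=0$ by hypothesis, so it is enough to prove $E'(s)\le 0$. Differentiating gives an interior term $2\mathrm{Re}\int_B\langle\partial_s u_s,u_s\rangle\,\vol$ and a boundary term $-\int_{\partial B}|u_s|^2\,d\sigma$ from the shrinking domain. Substituting the evolution equation converts the interior term to $-2\mathrm{Im}\int_B\langle\widetilde{D}u_s,u_s\rangle\,\vol$.

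Writing $\widetilde{D}=D+A$, the contribution of $A$ vanishes pointwise by the self-adjointness of $A$, so only $D$ contributes. Green's identity for the Clifford--Dirac operator,
\[
  \int_B\bigl(\langle Du,v\rangle-\langle u,Dv\rangle\bigr)\vol=\int_{\partial B}\langle c(\nu)u,v\rangle\,d\sigma,
\]
with $v=u$ and $\nu$ the outward unit normal, shows that $-2\mathrm{Im}\int_B\langle Du,u\rangle\,\vol$ equals a boundary integral controlled in absolute value by $\int_{\partial B}|u_s|^2\,d\sigma$, since Clifford multiplication $c(\nu)$ by a unit cotangent vector is unitary. The two boundary contributions cancel up to sign, giving $E'(s)\le 0$ as required.

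The main technical obstacle is that the distance sphere $\partial B(x_0,r+t-s)$ need not be smooth past the cut locus, so the naive differentiation under the integral and the application of Stokes' theorem require justification. The standard remedy, carried out in \cite[Chapter 7]{R4}, is to approximate $B(x_0,r+t-s)$ by smooth sublevel sets obtained either by mollifying the distance function or by restricting to radii that are regular values (via Sard's theorem), and then to pass to the limit; alternatively one argues directly with the Lipschitz distance function. In either route, the endomorphism $A$ enters only through the pointwise identity $\mathrm{Im}\langle Au,u\rangle=0$, so the whole classical argument extends verbatim to $\widetilde{D}$ with the same unit propagation speed.
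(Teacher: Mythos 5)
Your proof is correct and is precisely the Chernoff--Roe unit-propagation-speed energy estimate that the paper itself invokes by citation to \cite[Proposition 7.20]{R4} together with the remark on p.~95 of that book; the paper gives no written proof of its own, relying exactly on the observation you make explicit, namely that the zeroth-order self-adjoint perturbation $A$ drops out of the energy identity pointwise since $\mathrm{Im}\langle Au,u\rangle=0$.
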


For $\phi\in\mathcal{G}$, we can define the operator $\phi(\widetilde{D})$ as in \cite[Chapter 9]{R4}, that is,
\[
  \langle u,\phi(\widetilde{D})v\rangle=\frac{1}{2\pi}\int_\R\widehat{\phi}(t)\langle u,e^{\sqrt{-1}t\widetilde{D}}v\rangle dt
\]
for $u,v\in C^\infty_c(S)$.

\begin{proposition}
  \label{Gaussian}
  The map
  \[
    \mathcal{G}\to\Gau(S),\quad\phi\mapsto\phi(\widetilde{D})
  \]
  is a well-defined homomorphism.
\end{proposition}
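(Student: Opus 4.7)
The plan is to verify four separate claims in turn: (i) $\phi(\widetilde{D})$ is a well-defined bounded operator on $L^2(S)$; (ii) the operator is smoothing; (iii) its kernel enjoys Gaussian propagation; and (iv) the resulting assignment is an algebra homomorphism. Claim (i) is the easiest: since $\phi\in\mathcal{G}$ implies $\widehat{\phi}\in L^1(\R)$ with Gaussian decay, the defining integral converges absolutely with $|\langle u,\phi(\widetilde{D})v\rangle|\le\tfrac{1}{2\pi}\|\widehat{\phi}\|_{L^1}\|u\|\|v\|$, and Riesz representation produces a bounded $\phi(\widetilde{D})$. Applying the same reasoning to $\phi_j(\lambda):=\lambda^j\phi(\lambda)$, whose Fourier transform $(-\sqrt{-1})^j\widehat{\phi}^{(j)}$ again satisfies the Gaussian bound for $0\le j\le 2\lceil\tfrac{n}{2}\rceil$, I would conclude that $\widetilde{D}^j\phi(\widetilde{D})=\phi_j(\widetilde{D})$ is bounded on $L^2(S)$ throughout this range.

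The heart of the argument is the Gaussian decay of the kernel. I would first establish an operator-theoretic version: for $u,v\in C_c^\infty(S)$ with $r:=d(\mathrm{supp}(u),\mathrm{supp}(v))>0$, Lemma \ref{finite propagation} forces $\langle u,e^{\sqrt{-1}t\widetilde{D}}v\rangle=0$ whenever $|t|<r$, so the defining integral for $\widetilde{D}^j\phi(\widetilde{D})$ reduces to $|t|\ge r$ and gives
\[
|\langle u,\widetilde{D}^j\phi(\widetilde{D})v\rangle|\le C_1\|u\|\|v\|e^{-C_2 r^2}\qquad(0\le j\le 2\lceil\tfrac{n}{2}\rceil)
\]
with $C_1,C_2>0$ independent of $u,v,r$. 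To promote this to a pointwise kernel estimate at a pair $(x,y)$ with $r=d(x,y)$ large, I would localize $v$ to a small ball $B_\epsilon(y)$ of unit $L^2$-mass and read off a bound on $\|\widetilde{D}^j\phi(\widetilde{D})v\|_{L^2(B_\epsilon(x))}$ for $0\le j\le\lceil\tfrac{n}{2}\rceil$. Lemma \ref{D estimate} combined with local elliptic estimates (\cite[Proposition 5.16]{R4}) then upgrades this to an $H^{\lceil n/2\rceil}(B_{\epsilon/2}(x))$-bound of the same order $e^{-C_2 r^2}$. Since $\phi$ is real and $\widetilde{D}$ is essentially self-adjoint by Lemma \ref{essentially self-adjoint}, $\phi(\widetilde{D})$ is self-adjoint, so the symmetric argument yields the matching regularity in the $y$-variable. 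Sobolev embedding $H^{2\lceil n/2\rceil}\hookrightarrow C^0$ on the $2n$-dimensional product $M\times M$ then delivers the continuous kernel bound $|k_{\phi(\widetilde{D})}(x,y)|\le C_3 e^{-C_4 d(x,y)^2}$. Pairs at bounded distance are handled by the same local elliptic argument without the propagation input, giving a uniform pointwise bound that is absorbed into the Gaussian estimate after enlarging $C_3$. Smoothness of the kernel follows by rerunning this scheme with $\widetilde{D}^j\phi(\widetilde{D})$ in place of $\phi(\widetilde{D})$, using that $\mathcal{G}$ is stable under multiplication by monomials of admissible degree.

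The algebraic part is standard. Linearity in $\phi$ is immediate from the definition, and multiplicativity $\phi(\widetilde{D})\psi(\widetilde{D})=(\phi\psi)(\widetilde{D})$ follows from the functional calculus of the self-adjoint operator $\widetilde{D}$ (via Lemma \ref{essentially self-adjoint}), or, if one wishes to stay within the framework of the definition, by a direct Fourier computation using $\widehat{\phi\psi}=\tfrac{1}{2\pi}\widehat{\phi}*\widehat{\psi}$ together with the one-parameter group law $e^{\sqrt{-1}s\widetilde{D}}e^{\sqrt{-1}t\widetilde{D}}=e^{\sqrt{-1}(s+t)\widetilde{D}}$.

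The main obstacle I anticipate is the regularity step promoting operator-level Gaussian decay to a pointwise kernel estimate with constants uniform in $(x,y)$. Three ingredients conspire to make it succeed: (a) the bounded geometry of $S$ and of the perturbation $A$ (the standing hypothesis that $\nabla^iA$ is bounded for $i\le\lceil\tfrac{n}{2}\rceil-1$), which makes local elliptic estimates uniform; (b) Lemma \ref{D estimate}, which allows $\widetilde{D}$-derivatives to be traded for ordinary Sobolev derivatives; and (c) the defining condition of $\mathcal{G}$, which controls $\widehat{\phi}^{(i)}$ up to exactly $i=2\lceil\tfrac{n}{2}\rceil$, splitting evenly between the two factors of $M\times M$ as the argument requires.
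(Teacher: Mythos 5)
Your proposal is correct and follows essentially the same strategy as the paper: finite propagation plus Gaussian decay of $\widehat{\phi}$ yields operator-level Gaussian bounds on $\langle u,\widetilde D^i\phi(\widetilde D)\widetilde D^j v\rangle$ for $i,j\le\lceil n/2\rceil$, which are then bootstrapped to a pointwise kernel bound via Lemma~\ref{D estimate} and the (uniform) Sobolev inequality, with the derivative budget $2\lceil n/2\rceil$ split evenly between the two variables. The only cosmetic difference is that you invoke self-adjointness of $\phi(\widetilde D)$ to transfer regularity to the $y$-variable, whereas the paper achieves the same thing by explicitly testing against $u=\mathbf{1}_{h\widetilde K}k_{\phi(\widetilde D)}(x,\cdot)^*u_x$ and repeating the estimate.
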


\begin{proof}
  Let $\widetilde{K}$ denote the $\epsilon$-neighborhood of $K$ for some $\epsilon>0$. We take any $g,h\in G$, and let $\delta$ be the distance between $g\widetilde{K}$ and $h\widetilde{K}$. We also take any $u\in L^2(S\vert_{g\widetilde{K}})$ and $v\in C^\infty_c(S)$ with $\mathrm{supp}(v)\subset h\widetilde{K}$. Let $\phi\in\mathcal{G}$. Then by Lemma \ref{finite propagation}, we have
  \begin{align*}
    &|\langle u,\widetilde{D}^i(\phi(\widetilde{D})\widetilde{D}^jv)\vert_{g\widetilde{K}}\rangle_{L^2(S\vert_{g\widetilde{K}})}|\\
    &=|\langle u,\widetilde{D}^i(\phi(\widetilde{D})\widetilde{D}^jv)\rangle_{L^2(S)}|\\
    &=\frac{1}{2\pi}\left|\int_\R\widehat{\phi}(t)\langle u,e^{\sqrt{-1}t\widetilde{D}}\widetilde{D}^{i+j}v\rangle_{L^2(S)}dt\right|\\
    &=\frac{1}{2\pi}\left|\int_\R\widehat{\phi}^{(i+j)}(t)\langle u,e^{\sqrt{-1}t\widetilde{D}}v\rangle_{L^2(S)}dt\right|\\
    &\le\frac{1}{2\pi}\int_{|t|\ge\delta}C_1e^{-C_2t^2}dt\|u\|_{L^2(S\vert_{g\widetilde{K}})}\|v\|_{L^2(S\vert_{h\widetilde{K}})}\\
    &\le C_3e^{-C_2\delta^2}\|u\|_{L^2(S\vert_{g\widetilde{K}})}\|v\|_{L^2(S\vert_{h\widetilde{K}})}
  \end{align*}
  for some $C_1,C_2,C_3>0$ which are independent of the choice of $u,v$, where $i,j=0,1,\ldots,\lceil\frac{n}{2}\rceil$. Hence as $u$ is an arbitrary element of $L^2(S\vert_{g\widetilde{K}})$, we get
  \begin{equation}
    \label{estimate for Sobolev}
    \|\widetilde{D}^i((\varphi(\widetilde{D})\widetilde{D}^jv)\vert_{g\widetilde{K}})\|_{L^2(S\vert_{g\widetilde{K}})}\le C_3e^{-C_2\delta^2}\|v\|_{L^2(S\vert_{h\widetilde{K}})}
  \end{equation}
  for $i,j=0,1,\ldots,\lceil\frac{n}{2}\rceil$. By Lemma \ref{D estimate}, we have the Sobolev inequality
  \[
    \sup_{x\in g\widetilde{K}}|(\varphi(\widetilde{D})\widetilde{D}^jv)(x)|\le C_4\left(\sum_{i=0}^{\lceil\frac{n}{2}\rceil}\|\widetilde{D}^i(\varphi(\widetilde{D})\widetilde{D}^jv)\vert_{g\widetilde{K}}\|_{L^2(S\vert_{g\widetilde{K}})}^2\right)^\frac{1}{2}
  \]
  for some $C_4>0$ which is independent of the choice of $v$ and $g$, where $j=0,1,\ldots,\lceil\frac{n}{2}\rceil$. Then by \eqref{estimate for Sobolev}, we obtain
  \begin{equation}
    \label{estimate at x}
    \sup_{x\in g\widetilde{K}}|(\varphi(\widetilde{D})\widetilde{D}^jv)(x)|\le\lceil\tfrac{n}{2}\rceil C_3C_4e^{-C_2\delta^2}\|v\|_{L^2(S\vert_{h\widetilde{K}})}
  \end{equation}
  for $j=0,1,\ldots,\lceil\frac{n}{2}\rceil$. Now we take any $x\in g\widetilde{K}$ and $u_x\in S_x$. Let $u=\mathbf{1}_{h\widetilde{K}}(y)k_{\phi(\widetilde{D})}(x,-)^*u_x\in L^2(S\vert_{h\widetilde{K}})$. It follows from \eqref{estimate at x} that
  \begin{align*}
    \langle\widetilde{D}^ju,v\rangle_{L^2(S\vert_{h\widetilde{K}})}&=\langle u,\widetilde{D}^jv\rangle_{L^2(S)}\\
    &=\left|\int_M\langle k_{\phi(\widetilde{D})}(x,y)^*u_x,v(y)\rangle\vol(y)\right|\\
    &=\left|\int_M\langle u_x,k_{\phi(\widetilde{D})}(x,y)v(y)\rangle\vol(y)\right|\\
    &=|\langle u_x,(\varphi(\widetilde{D})\widetilde{D}^jv)(x)\rangle|\\
    &\le \lceil\tfrac{n}{2}\rceil C_3C_4e^{-C_2\delta}|u_x|\|v\|_{L^2(S\vert_{h\widetilde{K}})}
  \end{align*}
  for $j=0,1,\ldots,\lceil\frac{n}{2}\rceil$. Then since $v$ is any element of $C_c^\infty(S\vert_{h\widetilde{K}})$, we get
  \[
    \|\widetilde{D}^ju\|_{L^2(S\vert_{h\widetilde{K}})}\le C_5e^{-C_2\delta^2}|u_x|.
  \]
  for some $C_5>0$ which is independent of the choice of $x$ and $u_x$, where $j=0,1,\ldots,\lceil\frac{n}{2}\rceil$. Thus by arguing as above, we obtain
  \[
    |k(x,y)^*u_x|\le C_7e^{-C_2\delta^2}|u_x|
  \]
  for some $C_7>0$ which is independent of the choice of $g$ and $h$, where $x\in g\widetilde{K}$ and $y\in h\widetilde{K}$. As $u_x$ is an arbitrary element of $S_x$, this implies that
  \[
    |k(x,y)|=|k(x,y)^*|\le C_7e^{-C_2\delta^2}
  \]
  for any $x\in g\widetilde{K}$ and $y\in h\widetilde{K}$. Thus by \eqref{quasi-isometry}, we obtain $\phi(\widetilde{D})\in\mathcal{G}$. Clearly, the map in the statement is a homomorphism, completing the proof.
\end{proof}


\section{Piecewise normalized Betti number}\label{Piecewise normalized Betti number}

In this section, we define piecewise normalized Betti numbers as an analogy to normalized Betti numbers defined by Roe \cite{R2,R3}, and prove its invariance under a certain deformation.


\subsection{Abstract differential}

Let $\Lambda^k=\Lambda^kT^*M$ for $k=0,1,\ldots,n$, and let $\Lambda=\bigoplus_{k=0}^n\Lambda^k$. Then $\Lambda$ is a $G$-invariant Clifford bundle with Dirac operator $D=d+d^*$. Roe \cite{R2} defined an abstract differential on $\Lambda$ and the normalized Betti number of it. Here, we define a slightly different operator adopted to our situation, but we ambiguously call it an abstract differential.

\begin{definition}
  \label{abstract differential}
  An operator $\tilde{d}\colon C_c^\infty(\Lambda^k)\to C_c^\infty(\Lambda^{k+1})$ is an \emph{abstract differential} if it satisfies that

  \begin{enumerate}
    \item $\tilde{d}^2=0$, and

    \item $\tilde{d}+\tilde{d}^*$ is a generalized Dirac operator, that is, $D+A$ for some self-adjoint endomorphism $A$ of $\Lambda$, such that $\nabla^iA$ is bounded for $i=0,1,\ldots\lceil\frac{n}{2}\rceil-1$.


  \end{enumerate}
\end{definition}

We give the most important example of an abstract differential.

\begin{example}
  \label{Witten deformation}
  Let $f\colon M\to\R$ be a function. The Witten deformation of the differential $d$ on $\Lambda$ is defined by $d_t=e^{-tf}de^{tf}$. Clearly, $d_t^2=0$ and
  \[
    d_t+d_t^*=D+tR
  \]
  where $R=(df\wedge)-(df\lrcorner)$. If $f$ is bounded, that is, $\nabla^if$ is bounded for $i=0,1,\ldots,\lceil\frac{n}{2}\rceil$, then $\nabla^iR$ is bounded for $i=0,1,\ldots,\lceil\frac{n}{2}\rceil-1$. Thus $d_t$ is an abstract differential.
\end{example}

We define piecewise normalized Betti numbers of an abstract differential by replacing trace  with piecewise trace in the definition of normalized Betti numbers \cite[p.766]{R3}. However, compactly supported nonzero functions do not belong to the algebra $\mathcal{G}$. Then, instead of showing that $\phi(\widetilde{D})$ is of piecewise trace-class for a compactly supported function $\phi\colon\R\to\R$, we extend the definition of piecewise trace.

\begin{definition}
  Let $A\colon L^2(\Lambda)\to L^2(\Lambda)$ be an operator with bounded kernel function. We define a bounded function
  \[
    \widetilde{\Tr}(A)\colon G\to\R,\quad g\mapsto\int_{gK}\mathrm{tr}(k_A(x,x))\vol(x).
  \]
\end{definition}

By Lemma \ref{trace integral}, we have:

\begin{proposition}
  If $A\colon L^2(\Lambda)\to L^2(\Lambda)$ is of piecewise trace-class, then
  \[
    \widetilde{\Tr}(A)=\Tr(A).
  \]
\end{proposition}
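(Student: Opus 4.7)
The plan is to show that both $\Tr(A)(g)$ and $\widetilde{\Tr}(A)(g)$ coincide with the ordinary scalar trace of the compression $\mathbf{1}_{gK}A\mathbf{1}_{gK}$. The statement implicitly assumes that $A$ has a bounded kernel function (otherwise $\widetilde{\Tr}(A)$ is not defined), and the compression then inherits one.

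First I would unwind the definition of the piecewise trace. Since $A$ is piecewise trace-class, write $A=B^*C$ for piecewise Hilbert-Schmidt operators $B,C\colon L^2(\Lambda)\to L^2(\Lambda)$. By Lemma \ref{HS inner product} and the definitions,
\[
\Tr(A)(g)=\rho_2(B,C)(g)=\sum_{i\in I}\langle Be_i^g,Ce_i^g\rangle=\sum_{i\in I}\langle e_i^g,Ae_i^g\rangle.
\]
Because $\{e_i^g\mid i\in I\}$ is an orthonormal basis of $L^2(\Lambda\vert_{gK})$ and each $e_i^g$ vanishes off $gK$, the right-hand side is exactly the ordinary scalar trace of $\mathbf{1}_{gK}A\mathbf{1}_{gK}=(B\mathbf{1}_{gK})^*(C\mathbf{1}_{gK})$, which is trace-class as a product of two Hilbert-Schmidt operators.

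Next, the compression $\mathbf{1}_{gK}A\mathbf{1}_{gK}$ has kernel $\mathbf{1}_{gK}(x)\mathbf{1}_{gK}(y)\,k_A(x,y)$, which is bounded and supported on the finite-volume set $gK\times gK$. Invoking the standard kernel-integral formula for the trace of such an operator, exactly as in \cite[Theorem 8.12]{R3} used in the proof of Lemma \ref{trace integral}, yields
\[
\sum_{i\in I}\langle e_i^g,Ae_i^g\rangle=\int_{gK}\mathrm{tr}(k_A(x,x))\vol(x)=\widetilde{\Tr}(A)(g),
\]
which is the desired equality.

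The only subtlety to flag is that Lemma \ref{trace integral} was stated under a smoothness hypothesis on the kernel, while here only boundedness of $k_A$ is assumed. However, the argument in \cite[Theorem 8.12]{R3} uses only the pointwise boundedness of the kernel on the diagonal together with its integrability over $gK$, and never uses smoothness in an essential way, so the formula extends to our setting without change. This is the main (and mild) technical point.
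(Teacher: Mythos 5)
Your proof is correct and takes essentially the same route as the paper, which simply cites Lemma~\ref{trace integral}: your steps (identify $\Tr(A)(g)$ with the ordinary trace of the compression $\mathbf{1}_{gK}A\mathbf{1}_{gK}$, then apply the kernel-integral trace formula) are exactly the proof of that lemma. Your remark about smoothness vs.\ boundedness is a fair observation, though the paper sidesteps it by implicitly applying the proposition only to operators given as products of smoothing factors, for which Lemma~\ref{trace integral} applies directly.
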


Let $\tilde{d}\colon\Lambda\to\Lambda$ be an abstract differential, and let $\widetilde{D}=\tilde{d}+\tilde{d}^*$.

\begin{lemma}
  \label{bounded kernel}
  For any Schwartz-class function $\phi\colon\R\to\R$, $\phi(\widetilde{D})$ has a bounded kernel function.
\end{lemma}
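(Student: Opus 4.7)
\smallskip

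\emph{Plan.} The aim is to adapt the Sobolev embedding portion of the proof of Proposition \ref{Gaussian} while discarding the parts that produced Gaussian propagation. The key observation is that for a Schwartz function $\phi$ and any $j\ge 0$, the function $x\mapsto x^j\phi(x)$ is still Schwartz, hence in particular bounded. Since $\widetilde{D}$ is essentially self-adjoint by Lemma \ref{essentially self-adjoint}, the functional calculus gives that
\[
  \widetilde{D}^j\phi(\widetilde{D})=(x^j\phi(x))(\widetilde{D})
\]
is a bounded operator on $L^2(\Lambda)$ for every $j\ge 0$, and similarly $\widetilde{D}^i\phi(\widetilde{D})\widetilde{D}^j=\widetilde{D}^{i+j}\phi(\widetilde{D})$ is bounded for all $i,j\ge 0$ (the operators commute in the functional calculus).

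The first step is, for any $v\in C_c^\infty(\Lambda)$, to combine this with Lemma \ref{D estimate} and the elliptic estimate (as in the proof of Lemma \ref{D estimate}) to obtain
\[
  \sum_{i=0}^{\lceil n/2\rceil}\|\nabla^i(\phi(\widetilde{D})v)\|_{L^2(\Lambda)}^2\le C\,\|v\|_{L^2(\Lambda)}^2
\]
for some constant $C$ independent of $v$. Translating by elements of $G$ and using that $M$ and $\Lambda$ have bounded geometry with $G$-invariant metric, the Sobolev embedding on $\widetilde{K}$ can be transported uniformly to every translate $g\widetilde{K}$, yielding
\[
  \sup_{x\in g\widetilde{K}}|(\phi(\widetilde{D})v)(x)|\le C'\,\|v\|_{L^2(\Lambda)}
\]
with $C'$ independent of $g\in G$.

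The second step is to convert this into a bound on $k_{\phi(\widetilde{D})}$, imitating the second half of the proof of Proposition \ref{Gaussian}. Fix $g,h\in G$, $x\in g\widetilde{K}$, and $u_x\in\Lambda_x$, and consider the section
\[
  u(y)=\mathbf{1}_{h\widetilde{K}}(y)\,k_{\phi(\widetilde{D})}(x,y)^*u_x\in L^2(\Lambda\vert_{h\widetilde{K}}).
\]
Pairing $u$ against an arbitrary $v\in C_c^\infty(\Lambda\vert_{h\widetilde{K}})$ and invoking the pointwise estimate just established produces $\|u\|_{L^2(\Lambda\vert_{h\widetilde{K}})}\le C''|u_x|$, from which (by the same fibrewise argument used in Proposition \ref{Gaussian}) $|k_{\phi(\widetilde{D})}(x,y)|\le C''$ for some $C''$ independent of $g,h,x,y$. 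This is the required boundedness.

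\emph{Main obstacle.} The only real technical point is ensuring that the elliptic estimate and Sobolev embedding have constants uniform in $g\in G$; this is exactly what the $G$-invariance of the metric together with the bounded geometry of $\Lambda$ delivers, since translation by $g^{-1}$ reduces every estimate to one on the fixed precompact set $\widetilde{K}$. All other steps are formal manipulations of the functional calculus and mirror the already-carried-out argument for $\mathcal{G}$, with the Gaussian decay of $\widehat{\phi}^{(i)}$ replaced by the weaker, but sufficient, boundedness of $\widetilde{D}^j\phi(\widetilde{D})$ for every $j$.
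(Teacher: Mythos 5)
Your proposal is essentially correct and reaches the target by the same overall route the paper intends: Lemma~\ref{D estimate} yields a continuous embedding $\widetilde{W}^{\lceil n/2\rceil}\hookrightarrow W^{\lceil n/2\rceil}$, and then the two-stage ``Sobolev estimate on $g\widetilde K$ followed by duality via the test section $u(y)=\mathbf{1}_{h\widetilde K}(y)k_{\phi(\widetilde D)}(x,y)^*u_x$'' argument from Proposition~\ref{Gaussian} converts that into a uniform pointwise bound on the kernel. The one place where you depart from the paper is how you obtain the needed operator-norm bounds: Proposition~\ref{Gaussian}, and hence the paper's ``quite similarly'' here, uses the integral representation $\langle u,\phi(\widetilde D)v\rangle=\frac{1}{2\pi}\int_{\R}\widehat\phi(t)\langle u,e^{\sqrt{-1}t\widetilde D}v\rangle\,dt$ together with unit propagation and boundedness of $\widehat\phi^{(i)}$, whereas you instead note directly from the functional calculus that $x\mapsto x^{i+j}\phi(x)$ is bounded, so $\widetilde D^{i+j}\phi(\widetilde D)$ is bounded on $L^2$. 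For this lemma, where no off-diagonal decay is required, your shortcut is cleaner; the paper's Fourier route only earns its keep when one wants Gaussian decay.

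One small imprecision to fix in the write-up. In the first step the display should read
\[
  \sum_{i=0}^{\lceil n/2\rceil}\|\nabla^i(\phi(\widetilde{D})\widetilde{D}^{j}v)\|_{L^2(\Lambda)}^2\le C\,\|v\|_{L^2(\Lambda)}^2
  \qquad (j=0,\ldots,\lceil n/2\rceil),
\]
not just the $j=0$ case, and correspondingly in the second step the pairing argument gives $\|\widetilde{D}^{j}u\|_{L^2(\Lambda\vert_{h\widetilde K})}\le C''|u_x|$ for all $j\le\lceil n/2\rceil$, after which one must apply the Sobolev embedding on $h\widetilde K$ a second time to pass from these $L^2$ bounds on $\widetilde D^ju$ to a pointwise bound on $u(y)=k_{\phi(\widetilde D)}(x,y)^*u_x$. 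Phrasing this last step as a ``fibrewise argument'' undersells it: it is a genuine repetition of the Sobolev-embedding step, exactly as in the second half of the proof of Proposition~\ref{Gaussian}. With that clarification the argument is complete.
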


\begin{proof}
  By Lemma \ref{D estimate}, there is an embedding $\widetilde{W}^k\to W^k$, where $\widetilde{W}^k$ and $W^k$ denote the $k$-th Sobolev spaces with respect to $\widetilde{D}$ and $D$, respectively. Then the statement can be verified quite similarly to Proposition \ref{Gaussian}.
\end{proof}

Let $\mathcal{C}$ denote the set of compactly supported functions $\phi\colon\R_{\ge 0}\to\R$ such that $\phi(x^2)$ is smooth, $0\le\phi\le 1$ and $\phi(0)=1$.

\begin{definition}
  We define the \emph{piecewise normalized $k$-th Betti number} associated to an abstract differential $\tilde{d}$ as a bounded function
  \[
    b_k(\tilde{d})\colon G\to\R,\quad g\mapsto\inf_{\phi\in\mathcal{C}}\widetilde{\Tr}(\phi(\widetilde{D}^2\vert_{\Lambda^k}))(g).
  \]
\end{definition}

Clearly, $\widetilde{\Tr}(\phi(\widetilde{D}^2\vert_{\Lambda^k}))\ge 0$ for $\phi\in\mathcal{C}$, so the infimum in the above definition exists. We choose $C>0$ satisfying $|\Tr(e^{-\widetilde{D}^2})|\le C\mathbbm{1}$. We say that a subset $\mathcal{C}_0\subset\mathcal{C}$ is \emph{cofinal} if for any $\tilde{\phi}\in\mathcal{C}$ and $\epsilon>0$, there is $\phi\in\mathcal{C}_0$ satisfying
\begin{equation}
  \label{cofinal def}
  \tilde{\phi}(x^2)+C^{-1}\epsilon e^{-x^2}\ge\phi(x^2).
\end{equation}

\begin{lemma}
  \label{cofinal}
  If $\mathcal{C}_0$ is a cofinal subset of $\mathcal{C}$, then for each $g\in G$,
  \[
    b_k(\tilde{d})(g)=\inf_{\phi\in\mathcal{C}_0}\widetilde{\Tr}(\phi(\widetilde{D}^2\vert_{\Lambda_k}))(g).
  \]
\end{lemma}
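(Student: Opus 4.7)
The plan is to establish both inequalities $b_k(\tilde d)(g)\le \inf_{\phi\in\mathcal{C}_0}\widetilde{\Tr}(\phi(\widetilde D^2|_{\Lambda^k}))(g)$ and the reverse. The first is immediate from $\mathcal{C}_0\subset\mathcal{C}$, since the infimum over a subset is no smaller. The content is in the reverse inequality, which I obtain by leveraging the cofinality condition together with an operator inequality and monotonicity of $\widetilde{\Tr}$ on positive operators with continuous bounded kernels.

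Fix $g\in G$ and $\delta>0$. I first choose $\tilde\phi\in\mathcal{C}$ with $\widetilde{\Tr}(\tilde\phi(\widetilde D^2|_{\Lambda^k}))(g) < b_k(\tilde d)(g) + \delta/2$, possible by the definition of $b_k(\tilde d)(g)$ as an infimum. Applying the cofinality hypothesis with $\epsilon=\delta/2$ produces $\phi\in\mathcal{C}_0$ satisfying the pointwise scalar inequality $\tilde\phi(x^2)+C^{-1}(\delta/2)e^{-x^2}\ge \phi(x^2)$ for every $x\in\R$. Since $\widetilde D$ is essentially self-adjoint (Lemma \ref{essentially self-adjoint}) and $\widetilde D^2$ preserves the grading of $\Lambda$ (because $\widetilde D^2=\tilde d\tilde d^*+\tilde d^*\tilde d$ on each $\Lambda^k$), functional calculus transfers this scalar inequality into the operator inequality
\[
\tilde\phi(\widetilde D^2|_{\Lambda^k})+C^{-1}(\delta/2)\,e^{-\widetilde D^2|_{\Lambda^k}}\ge \phi(\widetilde D^2|_{\Lambda^k})\ge 0
\]
on $L^2(\Lambda^k)$.

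Next, I transfer this operator inequality to the piecewise trace. All three operators in the inequality have bounded continuous kernels: $\phi(x^2)$ and $\tilde\phi(x^2)$ are compactly supported smooth (hence Schwartz-class), so Lemma \ref{bounded kernel} applies; for $e^{-x^2}$, note that $e^{-x^2/2}\in\mathcal{G}$, so by Proposition \ref{Gaussian} we have $e^{-\widetilde D^2/2}\in\Gau(\Lambda)$, and Corollary \ref{Gaussian trace-class} gives that $e^{-\widetilde D^2}$ is piecewise trace-class with bounded continuous kernel. Monotonicity of $\widetilde{\Tr}$ on positive operators with continuous kernels is verified pointwise: for any positive operator $A$ with continuous kernel and any $x\in M$, $u\in S_x$, an approximation of $\delta$-sections gives $\langle u,k_A(x,x)u\rangle=\lim_{r\to 0}\vol(B_r(x))^{-2}\langle\psi_{x,u,r},A\psi_{x,u,r}\rangle\ge 0$, so an operator inequality $A_1\le A_2$ of positive operators with continuous kernels yields $k_{A_1}(x,x)\le k_{A_2}(x,x)$ as endomorphisms of $S_x$, hence $\mathrm{tr}(k_{A_1}(x,x))\le \mathrm{tr}(k_{A_2}(x,x))$ pointwise, and integration over $gK$ gives $\widetilde{\Tr}(A_1)(g)\le \widetilde{\Tr}(A_2)(g)$.

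Combining these, I get
\[
\widetilde{\Tr}(\phi(\widetilde D^2|_{\Lambda^k}))(g)\le \widetilde{\Tr}(\tilde\phi(\widetilde D^2|_{\Lambda^k}))(g)+C^{-1}(\delta/2)\widetilde{\Tr}(e^{-\widetilde D^2|_{\Lambda^k}})(g).
\]
Since $e^{-\widetilde D^2}$ is a sum of positive operators across degrees, $\widetilde{\Tr}(e^{-\widetilde D^2|_{\Lambda^k}})(g)\le \widetilde{\Tr}(e^{-\widetilde D^2})(g)=\Tr(e^{-\widetilde D^2})(g)\le C$ by the choice of $C$. Putting everything together yields $\widetilde{\Tr}(\phi(\widetilde D^2|_{\Lambda^k}))(g)\le b_k(\tilde d)(g)+\delta$, so $\inf_{\phi\in\mathcal{C}_0}\widetilde{\Tr}(\phi(\widetilde D^2|_{\Lambda^k}))(g)\le b_k(\tilde d)(g)+\delta$, and since $\delta>0$ is arbitrary the desired inequality follows. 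The main technical obstacle is verifying the pointwise monotonicity of $\widetilde{\Tr}$ on positive operators with continuous kernels; once that is in hand, the rest is a clean functional-calculus manipulation driven directly by the cofinality condition \eqref{cofinal def}.
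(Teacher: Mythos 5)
Your proof is correct and follows essentially the same route as the paper: invoke the cofinality inequality \eqref{cofinal def} to pass to the operator inequality by functional calculus, take $\widetilde{\Tr}$ using positivity/monotonicity and the bound $\widetilde{\Tr}(e^{-\widetilde D^2|_{\Lambda^k}})\le C\mathbbm{1}$, then let the error go to zero; the easy containment $\mathcal{C}_0\subset\mathcal{C}$ gives the other direction. You spell out the monotonicity of $\widetilde{\Tr}$ on positive operators with continuous kernels (via $\delta$-section approximation), which the paper leaves implicit, but the logical skeleton is identical.
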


\begin{proof}
  Take any $\tilde{\phi}\in\mathcal{C}$ and $\epsilon>0$. Then there is $\phi\in\mathcal{C}_0$ satisfying \eqref{cofinal def}, so we get
  \[
    \widetilde{\Tr}(\tilde{\phi}(\widetilde{D}^2\vert_{\Lambda_k}))+\epsilon\ge\widetilde{\Tr}(\phi(\widetilde{D}^2\vert_{\Lambda_k})).
  \]
  Thus as $\epsilon$ can be arbitrarily small, we obtain
  \[
    b_k(\tilde{d})(g)\ge\inf_{\phi\in\mathcal{C}_0}\widetilde{\Tr}(\phi(\widetilde{D}^2\vert_{\Lambda_k}))(g)
  \]
  for each $g\in G$. The reverse inequality is obvious by the definition of $b_k(\tilde{d})$, and therefore the statement follows.
\end{proof}

Any function on $[0,1]$ may be considered as a function on $\R$ by extending it by zero outside of $[0,1]$. Then we can define $\widetilde{\Tr}(\phi(\widetilde{D}^2\vert_{\Lambda^k}))$ for any $\phi\in C^\infty[0,1]$.

\begin{lemma}
  \label{Riesz}
  For any bounded linear function $\alpha\colon\ell^\infty(G)\to\R$, there is a signed regular Borel measure $\mu$ on $[0,1]$ such that for any $\phi\in C^\infty[0,1]$,
  \[
    \alpha(\widetilde{\Tr}(\phi(\widetilde{D}^2\vert_{\Lambda^k})))=\int_0^1\phi d\mu.
  \]
\end{lemma}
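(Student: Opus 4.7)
The plan is to view $\Phi(\phi) = \alpha(\widetilde{\Tr}(\phi(\widetilde{D}^2\vert_{\Lambda^k})))$ as a bounded linear functional on $(C^\infty[0,1], \|\cdot\|_\infty)$, extend it by density to $C[0,1]$, and invoke the Riesz representation theorem. Linearity of $\Phi$ is immediate from linearity of Borel functional calculus, additivity of $\widetilde{\Tr}$ (clear from the integral formula on each $gK$), and linearity of $\alpha$; the substantive step is the sup-norm bound $|\Phi(\phi)| \le C'\|\phi\|_\infty$ for some constant $C'$ independent of $\phi$.

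For this estimate I would dominate by the Gaussian heat operator. If $\phi \in C^\infty[0,1]$ is nonnegative and extended by zero to $\R$, then $\phi(x) \le e\|\phi\|_\infty e^{-x}$ for all $x \ge 0$: on $[0,1]$ this uses $e \cdot e^{-x} \ge 1$, and for $x > 1$ the left side vanishes. Since $\widetilde{D}^2\vert_{\Lambda^k}$ is nonnegative self-adjoint, Borel functional calculus yields the operator inequality $0 \le \phi(\widetilde{D}^2\vert_{\Lambda^k}) \le e\|\phi\|_\infty\, e^{-\widetilde{D}^2\vert_{\Lambda^k}}$. Because $e^{-x^2}$ lies in $\mathcal{G}$, Proposition \ref{Gaussian} gives $e^{-\widetilde{D}^2}\in\Gau(\Lambda)$, so by Corollary \ref{Gaussian trace-class} it is piecewise trace-class, and its piecewise trace is bounded by the constant $C$ chosen immediately before this lemma. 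For positive operators $A \le B$ with bounded kernel function, the localization $\mathbf{1}_{gK} A \mathbf{1}_{gK} \le \mathbf{1}_{gK} B \mathbf{1}_{gK}$ is of ordinary trace-class on the precompact set $gK$, so the standard monotonicity of trace gives $\widetilde{\Tr}(A)(g) \le \widetilde{\Tr}(B)(g)$. Applied here, this produces $0 \le \widetilde{\Tr}(\phi(\widetilde{D}^2\vert_{\Lambda^k})) \le eC\|\phi\|_\infty\mathbbm{1}$. For a general $\phi \in C^\infty[0,1]$, I would decompose $\phi = \|\phi\|_\infty - (\|\phi\|_\infty - \phi)$ into two smooth nonnegative functions of sup-norm at most $2\|\phi\|_\infty$, obtaining $\|\widetilde{\Tr}(\phi(\widetilde{D}^2\vert_{\Lambda^k}))\|_\infty \le 4eC\|\phi\|_\infty$ and hence $|\Phi(\phi)| \le 4eC\|\alpha\|\,\|\phi\|_\infty$.

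Since Weierstrass approximation makes polynomials, hence $C^\infty[0,1]$, dense in $C[0,1]$ with the sup-norm, $\Phi$ extends uniquely to a bounded linear functional on $C[0,1]$; the Riesz representation theorem then supplies the required signed regular Borel measure $\mu$ on $[0,1]$, and restricting back to $\phi \in C^\infty[0,1]$ gives the stated identity. The main obstacle I anticipate is legitimating the monotonicity of $\widetilde{\Tr}$, since $\widetilde{\Tr}$ is defined merely via diagonal kernel values rather than through a global trace-class structure; my approach is to reduce each pointwise value $\widetilde{\Tr}(A)(g)$ to the ordinary trace of $\mathbf{1}_{gK} A \mathbf{1}_{gK}$ on the compact fundamental domain, where the usual operator-monotonicity of trace applies directly.
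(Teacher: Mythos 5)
Your proof takes essentially the same route as the paper's: both reduce the lemma to a sup-norm estimate $|\alpha(\widetilde{\Tr}(\phi(\widetilde{D}^2\vert_{\Lambda^k})))| \le C'\|\phi\|_\infty$ and then invoke the Riesz representation theorem on $C[0,1]$. The only cosmetic difference is that the paper obtains the bound by comparing $\phi(\widetilde{D}^2\vert_{\Lambda^k})$ with the spectral projection $\mathbf{1}_{[0,1]}(\widetilde{D}^2\vert_{\Lambda^k})$ and simply asserting a constant $C_0$, whereas you derive the constant explicitly by dominating with the heat operator $e^{-\widetilde{D}^2\vert_{\Lambda^k}}$ (whose piecewise trace is already controlled by the fixed constant $C$) and then splitting a general $\phi$ into nonnegative pieces; your localization of $\widetilde{\Tr}$-monotonicity to the ordinary trace of $\mathbf{1}_{gK}A\mathbf{1}_{gK}$ on a compact fundamental domain correctly supplies a detail the paper leaves implicit.
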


\begin{proof}
  There is $C_0>0$ such that
  \begin{equation}
    \label{estimate trace}
    |\widetilde{\Tr}(\phi(\widetilde{D}^2\vert_{\Lambda^k}))|\le\sup_{x\in[0,1]}|\phi(x)||\widetilde{\Tr}(\mathbf{1}_{[0,1]}(\widetilde{D}^2\vert_{\Lambda^k}))|\le C_0\sup_{x\in[0,1]}|\phi(x)|\mathbbm{1}
  \end{equation}
  for any $\phi\in C^\infty[0,1]$. Then the map
  \[
    C^\infty[0,1]\to\R,\quad\phi\mapsto\widetilde{\Tr}(\phi(\widetilde{D}^2\vert_{\Lambda^k}))
  \]
   is a bounded linear function. Thus the statement follows from the Riesz representation theorem \cite[6.19 Theorem]{Ru}.
\end{proof}

\begin{proposition}
  \label{Tr(e)}
  The piecewise trace $\Tr(e^{-t\widetilde{D}^2\vert_{\Lambda^k}})$ converges weakly to $b_k(\tilde{d})$ as $t\to\infty$.
\end{proposition}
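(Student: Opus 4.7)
The plan is to identify the pointwise limit of the piecewise trace via a localized spectral representation and then upgrade to weak convergence using a uniform $\ell^\infty$-bound.

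First, fix $g \in G$ and apply Lemma \ref{Riesz} to the bounded linear functional $f \mapsto f(g)$: this yields a Borel measure $\mu_g$ on $[0,1]$ with $\widetilde{\Tr}(\phi(\widetilde{D}^2\vert_{\Lambda^k}))(g) = \int_0^1 \phi\,d\mu_g$ for all $\phi \in C^\infty[0,1]$. I check that $\mu_g$ is nonnegative and finite: positivity comes from the fact that for $\phi \geq 0$ the fiber endomorphism $k_\phi(x,x)$ is positive semidefinite (a standard consequence of positivity of the operator together with continuity of its kernel, supplied by Lemma \ref{bounded kernel}), and the estimate from the proof of Lemma \ref{Riesz} gives $\mu_g([0,1]) \leq C_0 < \infty$. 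Next, I identify $b_k(\tilde d)(g) = \mu_g(\{0\})$. Pick $\eta \in \mathcal{C}$ nonincreasing with $\eta(0) = 1$ and $\operatorname{supp}\eta \subset [0,1]$, and set $\phi_j(x) = \eta(jx)$; a routine check shows $\{\phi_j\}$ is cofinal in $\mathcal{C}$. Since $\phi_j \downarrow \mathbf{1}_{\{0\}}$ with $0 \leq \phi_j \leq 1$, Lemma \ref{cofinal} and dominated convergence give $\inf_j \int_0^1 \phi_j\,d\mu_g = \mu_g(\{0\})$; the reverse inequality $\int \phi\,d\mu_g \geq \mu_g(\{0\})$ follows immediately from $\phi(0) = 1$ and nonnegativity of $\phi$.

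Then I establish pointwise convergence of the heat trace by cutting off near the endpoint of $[0,1]$. Choose $\chi \in C^\infty[0,1]$ with $\chi \equiv 1$ on $[0,\tfrac12]$ and $\chi \equiv 0$ near $1$, and set $\tilde\phi_t(\lambda) = e^{-t\lambda}\chi(\lambda) \in C^\infty[0,1]$. Dominated convergence on $\mu_g$ gives $\widetilde{\Tr}(\tilde\phi_t(\widetilde{D}^2\vert_{\Lambda^k}))(g) \to \mu_g(\{0\}) = b_k(\tilde d)(g)$. The difference $e^{-t\widetilde{D}^2\vert_{\Lambda^k}} - \tilde\phi_t(\widetilde{D}^2\vert_{\Lambda^k})$ corresponds via the functional calculus to a nonnegative spectral function supported on $\{\lambda \geq \tfrac12\}$, where $e^{-t\lambda} \leq e^{-(t-1)/2}e^{-\lambda}$ for $t \geq 1$; hence as positive operators
\[
0 \leq e^{-t\widetilde{D}^2\vert_{\Lambda^k}} - \tilde\phi_t(\widetilde{D}^2\vert_{\Lambda^k}) \leq e^{-(t-1)/2}\,e^{-\widetilde{D}^2\vert_{\Lambda^k}},
\]
and the positivity of $\widetilde{\Tr}$ on positive operators with continuous kernel bounds the piecewise trace of the difference by $e^{-(t-1)/2}\,\Tr(e^{-\widetilde{D}^2\vert_{\Lambda^k}})$, which vanishes uniformly in $g$. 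Combining, $\Tr(e^{-t\widetilde{D}^2\vert_{\Lambda^k}})(g) \to b_k(\tilde d)(g)$ pointwise.

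Finally, the comparison $e^{-t\lambda} \leq e^{-\lambda}$ for $t \geq 1$ together with positivity yields the uniform bound $0 \leq \Tr(e^{-t\widetilde{D}^2\vert_{\Lambda^k}}) \leq \Tr(e^{-\widetilde{D}^2\vert_{\Lambda^k}}) \in \ell^\infty(G)$; pairing with any $\psi \in \ell^1(G)$ and applying dominated convergence on the counting measure of $G$ promotes the bounded pointwise convergence to weak (weak-$*$) convergence. The main obstacle I anticipate is reconciling Lemma \ref{Riesz}, which only handles $\phi \in C^\infty[0,1]$, with the heat function $e^{-t\lambda}$ on the full half-line; the smooth-cutoff decomposition circumvents this, and the key verification is that the operator inequality from the Borel functional calculus of the essentially self-adjoint $\widetilde{D}^2\vert_{\Lambda^k}$, combined with the positivity property of $\widetilde{\Tr}$, controls the high-spectrum tail uniformly in $g$ by the reference heat trace at time $1$.
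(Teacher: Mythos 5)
The pointwise analysis via Lemma \ref{Riesz}, the cutoff decomposition, and the tail estimate are all sound; the identification $b_k(\tilde d)(g)=\mu_g(\{0\})$ and the pointwise convergence of the heat trace are essentially correct. The fatal step is the last one. Bounded pointwise convergence on $G$, paired against $\ell^1(G)$ by dominated convergence, yields only weak-$*$ convergence of $\Tr(e^{-t\widetilde{D}^2\vert_{\Lambda^k}})$ in $\ell^\infty(G)=(\ell^1(G))^*$. The proposition, however, asserts convergence in the weak topology of $\ell^\infty(G)$, i.e.\ against the full dual $(\ell^\infty(G))^*$, and that is strictly stronger. Indeed, take $G=\Z$ and $f_t=\mathbf{1}_{\{t,t+1,\ldots\}}$: then $f_t\to 0$ pointwise with $\|f_t\|_\infty=1$, but a translation-invariant Banach limit sends every $f_t$ to a fixed positive constant, so $f_t$ does not converge weakly. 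This distinction is not cosmetic here: the whole point of $\bar{\mathcal{I}}$ being the weak closure, and of Corollary \ref{mean value}, is to pair with invariant means, which live in $(\ell^\infty(G))^*\setminus\ell^1(G)$, and the downstream proof of Theorem \ref{Betti number invariance} applies arbitrary positive bounded linear functionals $\alpha$ to the conclusion of this proposition.

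The paper avoids the trap precisely by not arguing pointwise-then-upgrade. Lemma \ref{Riesz} is applied to an \emph{arbitrary} bounded linear functional $\alpha$ on $\ell^\infty(G)$, not just to point evaluations. This transfers the problem to a genuine (countably additive, regular Borel) measure $\mu_\alpha$ on $[0,1]$, where monotone convergence is available; the sandwich $\widetilde{\Tr}(\phi(t\widetilde{D}^2))\le\Tr(e^{-t\widetilde{D}^2})\le\widetilde{\Tr}(\phi(s\widetilde{D}^2))+\epsilon\mathbbm{1}$ then gives $\alpha(\Tr(e^{-t\widetilde{D}^2}))\to\alpha(b_k(\tilde d))$ for every positive $\alpha$, and the Jordan decomposition of elements of $(\ell^\infty(G))^*$ extends this to all $\alpha$. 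Your cutoff-plus-tail decomposition would in fact carry over nearly verbatim if you replaced the fixed $g$ with an arbitrary positive bounded $\alpha$ from the outset and finished with Jordan decomposition rather than $\ell^1$-pairing; as written, though, the proof establishes a weaker conclusion than claimed.
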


\begin{proof}
  Clearly, there is $\phi\in\mathcal{C}$ satisfying $e^{-x^2}\ge\phi(x^2)$. Let $\mathcal{C}_0=\{\phi(tx)\mid t>0\}$. Then it is easy to see that $\mathcal{C}_0$ is a cofinal subset of $\mathcal{C}$. Hence by Lemma \ref{Riesz} and Lebesgue's monotone convergence theorem, $\widetilde{\Tr}(\phi(t\widetilde{D}^2\vert_{\Lambda^k}))$ converges weakly to $b_k(\tilde{d})$ as $t\to\infty$. Observe that for any $\epsilon,s>0$, there is $t>0$ satisfying
  \[
    \phi(sx^2)+C^{-1}\epsilon e^{-x^2}\ge e^{-tx^2}
  \]
  where $C>0$ is the constant in the definition of a cofinal subset of $\mathcal{C}$. Then
  \[
    \widetilde{\Tr}(\phi(t\widetilde{D}^2\vert_{\Lambda^k}))\le\Tr(e^{-t\widetilde{D}^2\vert_{\Lambda^k}})\le\widetilde{\Tr}(\phi(s\widetilde{D}^2\vert_{\Lambda^k}))+\epsilon\mathbbm{1}.
  \]
  Let $\alpha\colon\ell^\infty(G)\to\R$ be a positive bounded linear function. Then we have
  \[
    \alpha(\widetilde{\Tr}(\phi(t\widetilde{D}^2\vert_{\Lambda^k})))\le\alpha(\Tr(e^{-t\widetilde{D}^2\vert_{\Lambda^k}}))\le\alpha(\widetilde{\Tr}(\phi(s\widetilde{D}^2\vert_{\Lambda^k})))+\epsilon\alpha(\mathbbm{1}).
  \]
  Note that $\epsilon$ can be arbitrarily small, and accordingly, $s$ and $t$ can be arbitrarily large. Then we get
  \begin{equation}
    \label{weak conv}
    \lim_{t\to\infty}\alpha(\Tr(e^{-t\widetilde{D}^2\vert_{\Lambda^k}}))\to\alpha(b_k(\tilde{d})).
  \end{equation}
  As in \cite[Chapter IV, 8.16]{DS}, any bounded linear function on $\ell^\infty(G)$ is given by the integral with a bounded finitely additive signed measure, so by \cite[Chapter III, 1.8]{DS}, it admits the Jordan decomposition. Then \eqref{weak conv} holds for any bounded linear function $\alpha\colon\ell^\infty(G)\to\R$, completing the proof.
\end{proof}


\subsection{Invariance}

The rest of this section is devoted to prove the following theorem which is an analogy to \cite[(2.6) Theorem]{R3}.

\begin{theorem}
  \label{Betti number invariance}
  Let $F\colon \Lambda\to \Lambda$ be a bounded self-adjoint automorphism, and let $\tilde{d}_1,\tilde{d}_2$ be abstract differentials satisfying
  \[
    \tilde{d}_1F=F\tilde{d}_2.
  \]
  Then for $k=0,1,\ldots,n$, we have
  \[
    b_k(\tilde{d}_1)\approx b_k(\tilde{d}_2)\mod\overline{\mathcal{I}}.
  \]
\end{theorem}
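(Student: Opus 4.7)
The plan is to adapt Roe's proof of \cite[(2.6) Theorem]{R3} to the piecewise-trace setting. Taking adjoints in $\tilde{d}_1 F = F \tilde{d}_2$ and using $F=F^*$ yields $F\tilde{d}_1^* = \tilde{d}_2^* F$, so $\tilde{d}_1 = F\tilde{d}_2 F^{-1}$ and $\tilde{d}_1^* = F^{-1}\tilde{d}_2^* F$; in particular $F$ is a strict chain isomorphism between the two complexes $(\Lambda^\bullet,\tilde{d}_1)$ and $(\Lambda^\bullet,\tilde{d}_2)$ with inverse $F^{-1}$.

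By Proposition \ref{Tr(e)}, $b_k(\tilde{d}_i)$ is the weak limit of $\Tr(e^{-t\widetilde{D}_i^2|_{\Lambda^k}})$ as $t\to\infty$, and since $\bar{\mathcal{I}}$ is weakly closed it suffices to show, for every $t>0$,
\[
\Tr\bigl(e^{-t\widetilde{D}_1^2|_{\Lambda^k}}\bigr)-\Tr\bigl(e^{-t\widetilde{D}_2^2|_{\Lambda^k}}\bigr)\in\bar{\mathcal{I}},
\]
from which $\approx$ follows by symmetry. Since $e^{-tx^2}\in\mathcal{G}$, Proposition \ref{Gaussian} gives $e^{-t\widetilde{D}_i^2/2}\in\Gau(\Lambda)$; combined with Proposition \ref{algebra} and Corollary \ref{Gaussian trace-class}, the heat operators are piecewise trace-class and sit inside the algebra to which the trace property applies.

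For the heart of the argument, exploit the Hodge decomposition $\widetilde{D}_i^2|_{\Lambda^k} = \tilde{d}_i^*\tilde{d}_i + \tilde{d}_i\tilde{d}_i^*$, whose two summands mutually annihilate since $\tilde{d}_i^2=0$. Substituting the intertwinings realizes
\[
\widetilde{D}_1^2|_{\Lambda^k}
= F^{-1}\tilde{d}_2^* F^2\tilde{d}_2 F^{-1}+F\tilde{d}_2 F^{-2}\tilde{d}_2^* F,
\]
so each summand differs from the corresponding summand of $\widetilde{D}_2^2|_{\Lambda^k}$ only by an $F^{\pm 2}$ insertion and an outer conjugation by $F^{\pm 1}$. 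A Duhamel-type expansion of the heat operators, applied piece by piece, expresses the trace difference as an integral in $s$ of traces of cyclic rearrangements of operators in $\Gau(\Lambda)$, and by Proposition \ref{trace property} each such rearrangement lives in $\widehat{\mathcal{I}}\subset\bar{\mathcal{I}}$.

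The main obstacle is that $F$ does not intertwine the full Dirac operators $\widetilde{D}_i$ (only the chain differentials $\tilde{d}_i$), so no clean conjugation identity $\phi(\widetilde{D}_1^2)=F^{\pm 1}\phi(\widetilde{D}_2^2)F^{\mp 1}$ is available. One must treat the two Hodge-Laplacian summands separately, absorb the $F^{\pm 2}$ insertions via cyclic rearrangements, and check throughout that every intermediate factor remains in $\Gau(\Lambda)$ so that Proposition \ref{trace property} actually applies; this is the technically delicate part of the argument.
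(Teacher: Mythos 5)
Your reduction in the second paragraph contains a gap that the rest of the sketch cannot repair. You propose to deduce the theorem from the claim that
\[
\Tr\bigl(e^{-t\widetilde{D}_1^2|_{\Lambda^k}}\bigr)-\Tr\bigl(e^{-t\widetilde{D}_2^2|_{\Lambda^k}}\bigr)\in\bar{\mathcal{I}}
\quad\text{for every }t>0,
\]
but this is false. A chain isomorphism $F$ preserves cohomology but does not conjugate $\widetilde{D}_1^2$ into $\widetilde{D}_2^2$ (as you yourself observe at the end), so the two Laplacians are not isospectral and their heat traces genuinely differ at each finite $t$. The failure is already visible in the compact case: with $G$ trivial one has $\bar{\mathcal{I}}=0$, the piecewise trace is the ordinary trace, and the claim would assert that two strictly chain-isomorphic elliptic complexes on a closed manifold have identical heat traces for all $t$ --- false already for the Witten deformation $F=e^{-sf}$ with $f$ nonconstant. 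The heat traces agree only in the limit $t\to\infty$, and the entire content of Theorem \ref{Betti number invariance} is controlling that limit when the trace is defined only modulo $\bar{\mathcal{I}}$. A Duhamel expansion therefore cannot close the argument: no amount of cyclic rearrangement will make a nonzero quantity vanish, and in any case Proposition \ref{trace property} only puts the \emph{difference} $\Tr(AB)-\Tr(BA)$ into $\widehat{\mathcal{I}}$, not the individual rearranged traces.

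The paper's proof follows Roe \cite[(2.6) Theorem]{R3} and compares the heat operators of the two complexes at \emph{different} time scales. It sets $P_p=e^{-p\widetilde{D}_1^2|_{\Lambda^k}}$ and $Q_q=F^{-1}e^{-q\widetilde{D}_2^2|_{\Lambda^k}}F$ and works with $p\gg q$, rather than a single $t$. The engine is the approximate-projection estimate $\|P_{2p-2}-P_{p-1}Q_qP_{p-1}\|\le 4\|F\|\|F^{-1}\|\sqrt{q/(p-1)}$ from \cite[(5.3) Proposition]{R3}, transported to the piecewise setting in Lemma \ref{P^2-PQP}, combined with the trace lower bound $\Tr(E)\ge-\rho_1(E-E^2)$ for self-adjoint piecewise trace-class $E$ (Lemma \ref{E}) and the weak Cauchy control of Lemma \ref{P-P^2}. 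The commutators $[P_pQ_q,P_{p+1}]$ and $[P_pQ_{q+1},Q_q]$ are handled via Proposition \ref{trace property} --- this is the role you correctly anticipated for the trace property --- but inside the quantitative $P$-versus-$Q$ comparison of Lemma \ref{P vs Q}, not a Duhamel series. One then passes to the limit along sequences $p_i\gg q_i\to\infty$ chosen by Lemmas \ref{P^2-PQP} and \ref{P-P^2}, obtaining $b_k(\tilde{d}_1)\ge b_k(\tilde{d}_2)\bmod\bar{\mathcal{I}}$, and the reverse inequality by symmetry. Your sketch never compares $P$ at one time scale with $Q$ at another, which is precisely what makes the argument converge.
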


The proof of this theorem is basically a modification of that of \cite[(2.6) Theorem]{R3}.

\begin{lemma}
  [cf. {\cite[Theorem 4.1]{R3}}]
  \label{E}
  Let $E\colon L^2(\Lambda^k)\to L^2(\Lambda^k)$ be a self-adjoint piecewise trace-class operator. Then
  \[
    \Tr(E)\ge-\rho_1(E-E^2).
  \]
\end{lemma}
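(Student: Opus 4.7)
The plan is to reduce the inequality, at each fixed $g\in G$, to a termwise comparison of diagonal matrix coefficients in the orthonormal basis $\{e_i^g\mid i\in I\}$ of $L^2(\Lambda^k\vert_{gK})$. Choose a decomposition $E=B^*C$ with $B,C$ piecewise Hilbert-Schmidt and set
\[
  \lambda_i=\langle e_i^g,Ee_i^g\rangle,\qquad \mu_i=\langle e_i^g,E^2e_i^g\rangle=\|Ee_i^g\|^2.
\]
Self-adjointness of $E$ makes each $\lambda_i$ real, while $\mu_i\ge 0$ is automatic. The definitions of piecewise trace and of $\rho_1$ then identify
\[
  \Tr(E)(g)=\rho_2(B,C)(g)=\sum_{i\in I}\lambda_i
  \quad\text{and}\quad
  \rho_1(E-E^2)(g)=\sum_{i\in I}|\lambda_i-\mu_i|,
\]
so the statement of the lemma reduces to $\sum_i\lambda_i+\sum_i|\lambda_i-\mu_i|\ge 0$.

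This rests on the elementary real-variable inequality that, for $\mu\ge 0$,
\[
  \lambda+|\lambda-\mu|\ge\lambda+(\mu-\lambda)=\mu\ge 0,
\]
applied termwise over $i\in I$. Absolute convergence of $\sum_i\lambda_i$, needed to justify the rearrangement, follows from the Schwartz inequality \eqref{Schwartz inequality} applied to $B$ and $CP$, where $P$ is the sign operator built as in \eqref{sign} from the pairing $\langle Be_i^g,Ce_i^g\rangle$; once $\sum_i|\lambda_i|<\infty$, the bound $\sum_i\mu_i\le\sum_i|\lambda_i|+\sum_i|\lambda_i-\mu_i|$ takes care of the remaining summability in the only nontrivial case $\rho_1(E-E^2)(g)<\infty$.

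I do not foresee a substantive obstacle: the entire proof is an accounting exercise around the pointwise inequality $\lambda+|\lambda-\mu|\ge 0$ for $\mu\ge 0$, with the Hilbert-space content absorbed into identifying the two series above through the definitions of piecewise trace and $\rho_1$ and the self-adjointness of $E$.
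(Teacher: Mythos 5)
Your proof is correct and is, at its mathematical core, the same as the paper's: both reduce to the pointwise fact $\lambda_i \ge -|\lambda_i-\mu_i|$ for $\mu_i\ge 0$, which is exactly the termwise form of the paper's chain $\Tr(E)=\Tr(E-E^2)+\Tr(E^2)\ge\Tr(E-E^2)\ge-\rho_1(E-E^2)$. The only difference is presentational: the paper stays at the operator level, invoking Proposition~\ref{HS norm} to get $E^2$ piecewise trace-class and Proposition~\ref{trace positive} for $\Tr(E^2)\ge0$, whereas you unfold the definitions into diagonal matrix coefficients and run the same estimate term by term, with an explicit summability check via Cauchy--Schwarz that the paper leaves to the cited propositions.
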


\begin{proof}
  By Proposition \ref{HS norm}, $E^2$ is of piecewise trace-class, so $\rho_1(E-E^2)$ and the piecewise traces of $E^2$ and $E-E^2$ are defined. As $E$ is self-adjoint, $E^2$ is positive, so by Proposition \ref{trace positive}, $\Tr(E^2)\ge 0$. Then we get
  \[
    \Tr(E)=\Tr(E-E^2)+\Tr(E^2)\ge\Tr(E-E^2)\ge-\rho_1(E-E^2)
  \]
  and hence the statement follows.
\end{proof}

For positive integers $p$ and $q$, we set
\[
  P_p=e^{-p(\tilde{d}_1+\tilde{d}_1^*)^2\vert_{\Lambda^k}}\quad\text{and}\quad Q_q=F^{-1}e^{-q(\tilde{d}_2+\tilde{d}_2^*)^2\vert_{\Lambda^k}}F
\]
where $\tilde{d}_1,\tilde{d}_2$ and $F$ are as in Theorem \ref{Betti number invariance}.

\begin{lemma}
  \label{lim Q}
  The operator $Q_q$ is of piecewise trace-class such that
  \[
    \Tr(Q_q)=\Tr(FQ_qF^{-1}).
  \]
\end{lemma}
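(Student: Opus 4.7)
The plan is to factor the central piece $e^{-q\widetilde{D}_2^2|_{\Lambda^k}}$ as the square of an operator with Gaussian propagation, and then exploit the locality of $F$ as a bundle endomorphism to push the trace identity down to ordinary cyclicity on each fundamental-domain restriction.

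Write $\widetilde{D}_2=\tilde{d}_2+\tilde{d}_2^*$; by Definition \ref{abstract differential} this is a generalized Dirac operator in the sense of Section \ref{Operator with Gaussian propagation}. The function $\phi(x)=e^{-(q/2)x^2}$ belongs to $\mathcal{G}$, so by Proposition \ref{Gaussian} the operator $e^{-(q/2)\widetilde{D}_2^2}$ lies in $\Gau(\Lambda)$. Since $\widetilde{D}_2^2=\tilde{d}_2\tilde{d}_2^*+\tilde{d}_2^*\tilde{d}_2$ preserves the grading of $\Lambda$, the restriction $A:=e^{-(q/2)\widetilde{D}_2^2|_{\Lambda^k}}$ is obtained by composing with the local projection onto $\Lambda^k$ and hence still lies in $\Gau(\Lambda)$; in particular it is piecewise Hilbert-Schmidt by Proposition \ref{Gaussian HS}. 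Because $F$ acts fibrewise on $\Lambda$, both $F$ and $F^{-1}$ commute with multiplication by $\mathbf{1}_{gK}$, so
\[
  \rho_2(AF^{\pm 1})(g)=\|AF^{\pm 1}\mathbf{1}_{gK}\|_{\mathrm{HS}}=\|A\mathbf{1}_{gK}F^{\pm 1}\|_{\mathrm{HS}}\le\|F^{\pm 1}\|\rho_2(A)(g),
\]
showing that $AF$ and $AF^{-1}$ are piecewise Hilbert-Schmidt. Since $F^{-1}$ is self-adjoint, $(AF^{-1})^{*}(AF)=F^{-1}A^{2}F=Q_q$, exhibiting $Q_q$ as piecewise trace-class.

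For the trace identity, I unwind the definition via this factorisation: $\Tr(Q_q)(g)=\rho_2(AF^{-1},AF)(g)=\sum_{i\in I}\langle e_i^g,Q_q e_i^g\rangle$, which is exactly the ordinary $L^2(\Lambda)$-trace of the trace-class operator $\mathbf{1}_{gK}Q_q\mathbf{1}_{gK}$. Applying locality of $F^{\pm 1}$ once more gives
\[
  \mathbf{1}_{gK}Q_q\mathbf{1}_{gK}=F^{-1}(\mathbf{1}_{gK}A^2\mathbf{1}_{gK})F,
\]
and because $\mathbf{1}_{gK}A^2\mathbf{1}_{gK}$ is trace-class while $F,F^{-1}$ are bounded, the standard cyclicity of the ordinary trace yields $\mathrm{tr}(F^{-1}(\mathbf{1}_{gK}A^2\mathbf{1}_{gK})F)=\mathrm{tr}(\mathbf{1}_{gK}A^2\mathbf{1}_{gK})$. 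Since $FQ_qF^{-1}=A^2$, the right hand side equals $\Tr(FQ_qF^{-1})(g)$, proving the claimed equality.

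The only real subtlety is that $F$ need not have Gaussian propagation, so Proposition \ref{trace property} is not directly applicable here; what saves the argument is the locality of the bundle endomorphism $F$, which lets me isolate each $gK$-block and appeal to classical cyclicity there, bypassing the mod-$\widehat{\mathcal{I}}$ business entirely.
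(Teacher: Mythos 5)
Your proof is correct and follows essentially the same route as the paper: both factor $Q_q=(AF^{-1})^*(AF)$ with $A=e^{-(q/2)\widetilde{D}_2^2|_{\Lambda^k}}$, use the locality of the bundle automorphism $F$ to obtain the piecewise Hilbert--Schmidt bound, and then invoke cyclicity of the trace. The only cosmetic difference is that the paper argues via the pointwise fibre trace of the kernel along the diagonal while you restrict to $\mathbf{1}_{gK}Q_q\mathbf{1}_{gK}$ and apply operator-level cyclicity, which is if anything slightly more careful.
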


\begin{proof}
  Observe that
  \[
    k_{AF^{-1}}(x,y)=k_{A}(x,y)F^{-1}\quad\text{and}\quad k_{AF}(x,y)=k_{A}(x,y)F
  \]
  where $A=e^{-\frac{q}{2}(\tilde{d}_2+\tilde{d}_2^*)^2\vert_{\Lambda^k}}$. Then since $F$ is bounded and $A$ is piecewise Hilbert-Schmidt, the proof of Lemma \ref{HS norm integral} implies that $AF^{-1}$ and $AF$ are piecewise Hilbert-Schmidt, implying that $Q_q=(AF^{-1})^*(AF)$ is of piecewise trace-class. Moreover, we have
  \[
    \mathrm{tr}(k_{FQ_qF^{-1}}(x,y))=\mathrm{tr}(Fk_{Q_q}(x,y)F^{-1})=\mathrm{tr}(k_{Q_q}(x,y)).
  \]
  Thus by Lemma \ref{trace integral}, the statement follows.
\end{proof}

\begin{lemma}
  [cf. {\cite[(5.4) Corollary]{R3}}]
  \label{P^2-PQP}
  There is an inequality
  \[
    \rho_2(P_{2p}-P_pQ_qP_p)\le C\sqrt{\frac{q}{p-1}}\mathbbm{1}.
  \]
  for some $C>0$ which is independent of the choice of $p$ and $q$.
\end{lemma}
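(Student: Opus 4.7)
The plan is to adapt the Duhamel-based argument of \cite[(5.4) Corollary]{R3}. Writing $\Delta_2 = (\tilde{d}_2+\tilde{d}_2^*)^2|_{\Lambda^k}$, the semigroup identity $P_p\cdot P_p = P_{2p}$ combined with Duhamel's formula $I - e^{-q\Delta_2} = \int_0^q \Delta_2 e^{-t\Delta_2}\,dt$ gives
\[
  P_{2p} - P_pQ_qP_p \;=\; P_p(I - Q_q)P_p \;=\; \int_0^q P_p F^{-1}\Delta_2 e^{-t\Delta_2}FP_p\,dt.
\]
The task thus reduces to bounding the $\rho_2$-norm of the integrand pointwise by $C/\sqrt{t(p-1)}$; integrating in $t$ via $\int_0^q dt/\sqrt{t} = 2\sqrt{q}$ will then yield the desired bound $C\sqrt{q/(p-1)}\mathbbm{1}$.

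For the integrand, the key is to use the nilpotency $\tilde{d}_2^2 = (\tilde{d}_2^*)^2 = 0$ to split $\Delta_2 e^{-t\Delta_2}$ into the two commuting summands
\[
  \tilde{d}_2\tilde{d}_2^*\,e^{-t\Delta_2} = (\tilde{d}_2 e^{-t\Delta_2/2})(\tilde{d}_2^* e^{-t\Delta_2/2}), \qquad \tilde{d}_2^*\tilde{d}_2\,e^{-t\Delta_2} = (\tilde{d}_2^* e^{-t\Delta_2/2})(\tilde{d}_2 e^{-t\Delta_2/2}),
\]
using the commutativity of $\tilde{d}_2,\tilde{d}_2^*$ with $e^{-t\Delta_2/2}$. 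Taking adjoints of $\tilde{d}_1 F = F\tilde{d}_2$ and using $F^* = F$ yields the additional intertwining $\tilde{d}_2^* F = F\tilde{d}_1^*$, equivalently $F^{-1}\tilde{d}_2^* = \tilde{d}_1^* F^{-1}$. Applying this to the isolated $\tilde{d}_2^*$ in each summand, the first becomes $(P_p F^{-1}\tilde{d}_2 e^{-t\Delta_2}F)(\tilde{d}_1^* P_p)$ and the second becomes $(\tilde{d}_1^* P_p)(F^{-1}\tilde{d}_2 e^{-t\Delta_2}FP_p)$. Writing $P_p = P_{p-1}\cdot P_1$, commuting $\tilde{d}_1^*$ with $P_{p-1}$, and applying Proposition \ref{HS norm} to absorb $P_1$ as the $\rho_2$-factor, each summand has $\rho_2$-norm bounded by $\|F\|\|F^{-1}\|\cdot\|\tilde{d}_2 e^{-t\Delta_2}\|\cdot\|\tilde{d}_1^* P_{p-1}\|\cdot\rho_2(P_1)$. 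Spectral calculus gives $\|\tilde{d}_2 e^{-t\Delta_2}\|\le C_0/\sqrt{t}$ and $\|\tilde{d}_1^* P_{p-1}\|\le C_1/\sqrt{p-1}$, while Proposition \ref{Gaussian HS} provides the uniform bound $\rho_2(P_1)\le C_2\mathbbm{1}$, yielding the required pointwise bound $C/\sqrt{t(p-1)}$.

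The main obstacle is the asymmetric nature of the chain intertwinings: since $F\tilde{d}_2 = \tilde{d}_1 F$ but $F\tilde{d}_1^* = \tilde{d}_2^* F$, the operator $F$ does not conjugate $\widetilde{D}_2$ into $\widetilde{D}_1$, and in particular it does not commute with $e^{-t\Delta_2}$ in any useful way, precluding a direct comparison of the two heat kernels. What rescues the argument is the nilpotency of $\tilde{d}_2$ and $\tilde{d}_2^*$, which kills the ``diagonal'' terms when expanding $\Delta_2 e^{-t\Delta_2} = (\tilde{d}_2+\tilde{d}_2^*)^2 e^{-t\Delta_2}$ and leaves exactly one $\tilde{d}_2^*$ in each surviving summand---precisely the operator that can be pushed through the appropriate $F^{\pm 1}$ to produce the $\tilde{d}_1^*$ needed to supply the $1/\sqrt{p-1}$-smoothing against $P_{p-1}$ and balance the $1/\sqrt{t}$-singularity that makes the $t$-integral converge to $2\sqrt{q}$.
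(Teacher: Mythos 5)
Your proof is correct and lands on the same bound, but it is a genuinely more self-contained route than the paper's. The paper's argument is essentially three lines: it uses the factorization $P_{2p}-P_pQ_qP_p = P_1(P_{2p-2}-P_{p-1}Q_qP_{p-1})P_1$, cites Roe's operator-norm estimate $\|P_{2p-2}-P_{p-1}Q_qP_{p-1}\|\le 4\|F\|\|F^{-1}\|\sqrt{q/(p-1)}$ from \cite[(5.3) Proposition]{R3} as a black box, and converts that operator norm into a $\rho_2$-bound via Proposition \ref{HS norm} using the flanking $P_1$'s. What you have done instead is effectively \emph{reprove} Roe's (5.3) from scratch: the Duhamel identity for $I-e^{-q\Delta_2}$, the nilpotency splitting of $\Delta_2 e^{-t\Delta_2}$, the adjoint intertwining $\tilde{d}_2^*F=F\tilde{d}_1^*$, and the spectral estimates $\|\tilde{d}_2 e^{-t\Delta_2}\|=O(t^{-1/2})$, $\|\tilde{d}_1^*P_{p-1}\|=O((p-1)^{-1/2})$ are precisely the engine of Roe's argument. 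Your version has the merit of being self-contained, and you correctly interleave the piecewise-Hilbert-Schmidt step (absorbing a single $P_1$ via Proposition \ref{HS norm}, using that $P_1\in\Gau(\Lambda)$ is piecewise Hilbert-Schmidt by Propositions \ref{Gaussian} and \ref{Gaussian HS}) into the $t$-integral rather than tacking it on at the end. One cosmetic remark: after announcing the half-time factorization $(\tilde{d}_2 e^{-t\Delta_2/2})(\tilde{d}_2^* e^{-t\Delta_2/2})$, the identity you actually invoke in the next sentence is the full-time graded commutation $\tilde{d}_2\tilde{d}_2^*e^{-t\Delta_2}=\tilde{d}_2 e^{-t\Delta_2}\tilde{d}_2^*$; both are valid, but the first is never used and can be dropped.
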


\begin{proof}
  By \cite[(5.3) Proposition]{R3}, we have
  \[
    \|P_{2p-2}-P_{p-1}Q_qP_{p-1}\|\le 4\|F\|\|F^{-1}\|\sqrt{\frac{q}{p-1}}
  \]
  so by Propositions \ref{HS norm}, we get
  \[
    \rho_2(P_{2p}-P_{p}Q_qP_{p})\le\|P_1\|\|P_{2p-2}-P_{p-1}Q_qP_{p-1}\|\rho_2(P_1)\le C\sqrt{\frac{q}{p-1}}\mathbbm{1}
  \]
  for some $C>0$ which is independent of the choice of $p$ and $q$.
\end{proof}

\begin{lemma}
  [cf. {\cite[(6.2) Proposition]{R3}}]
  \label{P-P^2}
  Let $f\colon\N\to\N$ be an increasing function. Then $\rho_1(P_p-P_{f(p)})$ and $\rho_1(Q_q-Q_{f(q)})$ converge weakly to zero as $p\to\infty$ and $q\to\infty$.
\end{lemma}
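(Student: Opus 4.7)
The plan is to reduce both claims to Proposition \ref{Tr(e)} by exploiting the positivity of the spectral function $e^{-pt}-e^{-f(p)t}$ on $\R_{\geq 0}$ whenever $f(p)\geq p$. Since $\rho_1$ is invariant under sign change of its argument, I may assume $f(p)\geq p$ and $f(q)\geq q$ throughout; because $f\colon\N\to\N$ is increasing, $f(p),f(q)\to\infty$, so Proposition \ref{Tr(e)} applies to each heat trace appearing below.

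For $\rho_1(P_p-P_{f(p)})$: under the assumption $f(p)\geq p$, the operator $P_p-P_{f(p)}$ is positive, self-adjoint, and piecewise trace-class, so by Proposition \ref{trace positive}
\[
\rho_1(P_p-P_{f(p)})=\Tr(P_p-P_{f(p)})=\Tr(P_p)-\Tr(P_{f(p)}).
\]
By Proposition \ref{Tr(e)}, both $\Tr(P_p)$ and $\Tr(P_{f(p)})$ converge weakly to $b_k(\tilde{d}_1)$, so their difference converges weakly to $0$.

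For $\rho_1(Q_q-Q_{f(q)})$: the operator $Q_q-Q_{f(q)}=F^{-1}E_qF$, with $E_q=e^{-q\widetilde{D}_2^2|_{\Lambda^k}}-e^{-f(q)\widetilde{D}_2^2|_{\Lambda^k}}$, is no longer self-adjoint. Since $E_q\geq 0$, functional calculus supplies its positive square root $E_q^{1/2}$, and the identity
\[
\|E_q^{1/2}\mathbf{1}_{gK}\|_{\mathrm{HS}}^2=\Tr(\mathbf{1}_{gK}E_q\mathbf{1}_{gK})=\Tr(E_q)(g)
\]
shows $E_q^{1/2}$ is piecewise Hilbert-Schmidt with $\rho_2(E_q^{1/2})^2=\Tr(E_q)$. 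Using self-adjointness of $F,F^{-1},E_q^{1/2}$, I factor $Q_q-Q_{f(q)}=(E_q^{1/2}F^{-1})^*(E_q^{1/2}F)$. Now $F^{\pm 1}$ are fiberwise endomorphisms of $\Lambda$, so they commute with $\mathbf{1}_{gK}$, giving $\rho_2(E_q^{1/2}F^{\pm 1})(g)=\|E_q^{1/2}\mathbf{1}_{gK}F^{\pm 1}\|_{\mathrm{HS}}\leq\|F^{\pm 1}\|\rho_2(E_q^{1/2})(g)$. Combining with Proposition \ref{HS trace-class} yields the pointwise bound
\[
\rho_1(Q_q-Q_{f(q)})\leq\|F\|\|F^{-1}\|\,\Tr(E_q),
\]
and the right-hand side is non-negative and tends weakly to $0$ by Proposition \ref{Tr(e)}. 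Since $\rho_1\geq 0$ is dominated by a non-negative sequence converging weakly to $0$, it also converges weakly to $0$ (applying the Jordan decomposition of bounded linear functionals on $\ell^\infty(G)$, as in the proof of Proposition \ref{Tr(e)}).

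The main obstacle is the $Q$ case, specifically the verification that $E_q^{1/2}$ is piecewise Hilbert-Schmidt so that Proposition \ref{HS trace-class} applies: because $\sqrt{e^{-qx}-e^{-f(q)x}}$ fails to be smooth at $x=0$, the operator $E_q^{1/2}$ does not arise from the functional calculus in $\mathcal{G}$ of Proposition \ref{Gaussian}, and its Hilbert-Schmidt property has to be extracted directly from the trace identity above rather than from Gaussian-propagation estimates.
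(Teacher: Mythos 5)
Your proof is correct and, for the $P$ case, identical to the paper's: since $f$ is increasing, $f(p)\ge p$, so $P_p-P_{f(p)}$ is positive, self-adjoint, and piecewise trace-class, hence $\rho_1(P_p-P_{f(p)})=\Tr(P_p)-\Tr(P_{f(p)})$, which tends weakly to $0$ by Proposition~\ref{Tr(e)}.

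For the $Q$ case the paper compresses the argument to a citation of Lemma~\ref{lim Q}, and you spell out what that citation must mean. You correctly identify the point the terse proof glosses over: $Q_q-Q_{f(q)}=F^{-1}E_qF$ is not self-adjoint, so $\rho_1$ need not coincide with $\Tr$. Your resolution --- factor $Q_q-Q_{f(q)}=(E_q^{1/2}F^{-1})^*(E_q^{1/2}F)$, note that the multiplication operator $F^{\pm1}$ commutes with $\mathbf{1}_{gK}$ so the otherwise-unavailable right-multiplication estimate for $\rho_2$ is sidestepped, verify $E_q^{1/2}$ is piecewise Hilbert--Schmidt directly from $\rho_2(E_q^{1/2})^2=\Tr(E_q)$, and then apply Proposition~\ref{HS trace-class} to get $\rho_1(Q_q-Q_{f(q)})\le\|F\|\|F^{-1}\|\Tr(E_q)$ --- matches the decomposition already used in the proof of Lemma~\ref{lim Q} (there with $A=e^{-\frac q2\widetilde D_2^2}$ in place of $E_q^{1/2}$), so it is the intended route rather than a new one. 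Your closing remark that $E_q^{1/2}$ does not lie in $\mathcal G$ and so its Hilbert--Schmidt property cannot be read off Proposition~\ref{Gaussian} is a genuinely useful clarification that the paper leaves implicit. The final step, passing from the pointwise domination $0\le\rho_1(Q_q-Q_{f(q)})\le\|F\|\|F^{-1}\|\Tr(E_q)$ to weak convergence via the Jordan decomposition of bounded linear functionals on $\ell^\infty(G)$, is exactly the device used in Proposition~\ref{Tr(e)} and is applied correctly.
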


\begin{proof}
  By Lemma \ref{Tr(e)}, $\Tr(P_p)$ is weakly convergent as $p\to\infty$, so $\rho_1(P_p-P_{f(p)})=\Tr(P_p-P_{f(p)})$ converges weakly to zero as $p\to\infty$. By Lemma \ref{lim Q}, we also get the result for $\rho_1(Q_q-Q_{f(q)})$.
\end{proof}

\begin{lemma}
  \label{estimate rho}
  Let $A\colon L^2(\Lambda)\to L^2(\Lambda)$ be a bounded operator, and let $B\colon L^2(\Lambda)\to L^2(\Lambda)$ be a piecewise Hilbert-Schmidt operator. Then for $p+q\ge 1$,
  \[
    \rho_1(ABP_pQ_q)\le C\|A\|\rho_2(B)
  \]
  where $C>0$ is independent of the choice of $A,B,p,q$.
\end{lemma}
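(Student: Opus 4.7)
My plan is to bound $\rho_1(ABP_pQ_q)(g)$ through the trace-class norm of the ``block'' $\mathbf{1}_{gK}ABP_pQ_q\mathbf{1}_{gK}$ regarded as an operator on $L^2(\Lambda|_{gK})$, and then to exploit the Gaussian propagation of $P_pQ_q$ together with the piecewise Hilbert--Schmidt structure of $B$. The elementary inequality that the sum of absolute values of diagonal entries in any orthonormal basis is dominated by the trace-class norm gives $\rho_1(ABP_pQ_q)(g)\le\|\mathbf{1}_{gK}ABP_pQ_q\mathbf{1}_{gK}\|_1$. Writing $\mathbf{1}_{gK}ABP_pQ_q\mathbf{1}_{gK}=(\mathbf{1}_{gK}A)(BP_pQ_q\mathbf{1}_{gK})$ and applying $\|XY\|_1\le\|X\|_{\mathrm{op}}\|Y\|_1$ together with $\|\mathbf{1}_{gK}A\|_{\mathrm{op}}\le\|A\|$ reduces the task to bounding $\|BP_pQ_q\mathbf{1}_{gK}\|_1$.

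Next, I insert the geometric partition $I=\sum_h\mathbf{1}_{hK}$ between $B$ and $P_pQ_q$ to obtain $BP_pQ_q\mathbf{1}_{gK}=\sum_h(B\mathbf{1}_{hK})(\mathbf{1}_{hK}P_pQ_q\mathbf{1}_{gK})$. The triangle inequality for the trace-class norm and the standard $\|XY\|_1\le\|X\|_{\mathrm{HS}}\|Y\|_{\mathrm{HS}}$ give $\|BP_pQ_q\mathbf{1}_{gK}\|_1\le\sum_h\rho_2(B)(h)\cdot\|\mathbf{1}_{hK}P_pQ_q\mathbf{1}_{gK}\|_{\mathrm{HS}}$, using $\|B\mathbf{1}_{hK}\|_{\mathrm{HS}}=\rho_2(B)(h)$ by the definition of $\rho_2$. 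This replaces $B$ by its local piecewise HS value at each $h$ and reduces the problem to estimating the Gaussian factor.

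The Gaussian factor is then controlled via the algebra $\Gau(\Lambda)$. Both $P_p$ and $e^{-q\widetilde{D}_2^2|_{\Lambda^k}}$ lie in $\Gau(\Lambda)$ by Proposition~\ref{Gaussian}, and since $F^{\pm 1}$ are bounded fiberwise endomorphisms that preserve Gaussian propagation, we have $P_pQ_q\in\Gau(\Lambda)$ by Proposition~\ref{algebra}. When $p+q\ge 1$, at least one of $p\ge 1/2$ or $q\ge 1/2$ holds, and I use the semigroup identity $P_p=P_{p-1/2}P_{1/2}$ (or its analog inside $Q_q$) to extract from $P_pQ_q$ a factor of the form $e^{-t\widetilde{D}^2}$ with $t\ge 1/2$, whose kernel admits a Gaussian bound with constants uniform over this regime. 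Combined with Lemma~\ref{HS norm integral} and the quasi-isometry~\eqref{quasi-isometry}, this yields $\|\mathbf{1}_{hK}P_pQ_q\mathbf{1}_{gK}\|_{\mathrm{HS}}\le C_1 e^{-C_2 d(g,h)^2}$ with $C_1,C_2$ independent of $p,q,g,h$. Assembling the estimates gives $\rho_1(ABP_pQ_q)(g)\le C_1\|A\|\sum_h\rho_2(B)(h)e^{-C_2 d(g,h)^2}$, and applying the growth bound~\eqref{growth} exactly as in the proof of Proposition~\ref{Gaussian HS} dominates the convolution by a universal multiple of $\rho_2(B)$ as a bounded function on $G$, establishing the claim.

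The main obstacle is verifying the uniform Gaussian kernel bound for the family $\{P_pQ_q\}_{p+q\ge 1}$: this requires the case analysis sketched above together with a careful tracking of the constants in Proposition~\ref{Gaussian} applied to the Gaussian symbol $e^{-tx^2}$ for $t\ge 1/2$, ensuring that the derivative estimates on $\widehat{\phi_t}$ entering the proof of that proposition can be taken uniform over $t$ in this range. Once this uniformity is secured, the remaining steps are the standard interplay between Hilbert--Schmidt factorizations, trace-class norms, and Gaussian sums on the group $G$.
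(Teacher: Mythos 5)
Your opening reductions are fine and your insertion of the partition $\sum_h\mathbf{1}_{hK}$ is a reasonable idea, but the argument hinges on a false claim: that $\|\mathbf{1}_{hK}P_pQ_q\mathbf{1}_{gK}\|_{\mathrm{HS}}\le C_1e^{-C_2 d(g,h)^2}$ with $C_1,C_2$ \emph{independent} of $p,q$. There is no such uniform Gaussian bound. The kernel of $e^{-t\widetilde{D}^2}$ spreads out as $t\to\infty$ — already for the flat Laplacian the Gaussian exponent in $|k_t(x,y)|\lesssim e^{-d(x,y)^2/(4t)}$ degrades like $1/t$ — and extracting a factor $e^{-t\widetilde{D}^2}$ with $t\ge\frac{1}{2}$ does not help, since the complementary factor $P_{p-1/2}$ (or $P_{p-1}$) still has a kernel whose Gaussian constant decays as $p$ grows, and convolving a Gaussian of exponent $\sim 1/p$ with a fixed Gaussian again gives an exponent $\sim 1/p$. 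Concretely, for the symbol $\phi_t(x)=e^{-tx^2}$ one has $\widehat{\phi_t}(\xi)\sim e^{-\xi^2/(4t)}$, so no uniform $t\ge\frac{1}{2}$ estimate of the kind required in Proposition~\ref{Gaussian} exists; the ``main obstacle'' you flag at the end cannot be overcome.

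The paper avoids all kernel estimates here and uses the soft inequalities already in place. Since $P_pQ_q=(P_pQ_{q-1})Q_1$ (resp. $P_pQ_0=(P_{p-1})P_1$ when $q=0$) and $P_pQ_{q-1}$ is a bounded operator with norm $\le\|F\|\,\|F^{-1}\|$ uniformly in $p,q$, Proposition~\ref{HS norm} gives $\rho_2(P_pQ_q)\le\|P_pQ_{q-1}\|\,\rho_2(Q_1)$, and $\rho_2(Q_1)$ (resp. $\rho_2(P_1)$) is a fixed bounded function on $G$. Together with Propositions~\ref{HS norm} and~\ref{HS trace-class}, which give $\rho_1(ABP_pQ_q)\le\rho_2(AB)\,\rho_2(P_pQ_q)\le\|A\|\,\rho_2(B)\,\rho_2(P_pQ_q)$, this yields the lemma with $C$ essentially $\|F\|\,\|F^{-1}\|\,\sup\rho_2(Q_1)$. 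The moral is that the semigroup factorization should be used to isolate a \emph{fixed} piecewise Hilbert--Schmidt factor ($Q_1$ or $P_1$) and absorb the rest into a uniformly bounded operator norm, not to propagate a pointwise Gaussian kernel estimate across the whole family.
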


\begin{proof}
  By Propositions \ref{HS norm} and \ref{HS trace-class}, we have
\[
  \rho_1(ABP_pQ_q)\le\rho_2(AB)\rho_2(P_pQ_q)\le\|A\|\rho_2(B)\rho_2(P_pQ_q).
\]
If $q>0$, then by Proposition \ref{HS norm},
\[
  \rho_2(P_pQ_q)\le\|P_pQ_{q-1}\|\rho_2(Q_1)=\rho_2(Q_1).
\]
If $q=0$, then we can see quite similarly that $\rho_2(P_pQ_q)\le\rho_2(P_1)$. Since $\rho_2(P_1)$ and $\rho_2(Q_1)$ are bounded functions, the statement follows.
\end{proof}

\begin{lemma}
  \label{P vs Q}
  Let $\alpha\colon\ell^\infty(G)\to\R$ be a positive bounded linear function. Suppose that for $0<\epsilon<1$ and $i=0,1$,
  \begin{alignat*}{2}
    &\alpha(\rho_1(P_p-P_{2p+i}))<\epsilon\qquad&&\alpha(\rho_1(Q_q-Q_{2q+i}))<\epsilon\\
    &\alpha(\rho_1(P_p-P_{p+1}))<\epsilon&&\rho_2(P_{2p}-P_pQ_qP_p)<\epsilon\mathbbm{1}.
  \end{alignat*}
  Then there is an inequality
  \begin{align*}
    \alpha(\Tr(P_p))&<\alpha(\Tr(Q_q))+\alpha(\Tr([P_pQ_q,P_{p+1}]))\\
    &\quad+\alpha(\Tr([P_pQ_{q+1},Q_q]))+C\epsilon^\frac{1}{2}
  \end{align*}
  for some $C>0$ which is independent of the choice of $p,q,\epsilon$.
\end{lemma}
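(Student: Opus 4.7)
My plan is to build a chain of approximations that takes $\Tr(P_p)$ to $\Tr(Q_q)$ while producing the two commutators on the right-hand side and accumulating errors controllable by $C\epsilon^{1/2}$ under $\alpha$. First, I shift $P_p$ to $P_{2p+1}$: since $P_p-P_{2p+1}=P_p(I-P_{p+1})$ is a product of commuting positive spectral functionals of $(\tilde d_1+\tilde d_1^*)^2$ it is positive, so Proposition~\ref{trace positive} gives $\Tr(P_p-P_{2p+1})=\rho_1(P_p-P_{2p+1})$ and the first hypothesis with $i=1$ yields $\alpha(\Tr(P_p))<\alpha(\Tr(P_{2p+1}))+\epsilon$. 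Next I decompose
\[
P_{2p+1}=P_{2p}P_1=P_pQ_qP_{p+1}+XP_1,\qquad X:=P_{2p}-P_pQ_qP_p,
\]
where $\rho_2(X)<\epsilon\mathbbm{1}$ by the fourth hypothesis, and apply the algebraic identity $P_pQ_qP_{p+1}=P_{2p+1}Q_q+[P_pQ_q,P_{p+1}]$ to produce the first target commutator:
\[
\Tr(P_{2p+1})=\Tr(P_{2p+1}Q_q)+\Tr([P_pQ_q,P_{p+1}])+\Tr(XP_1).
\]

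To shift $P_{2p+1}$ back to $P_p$ inside $\Tr(P_{2p+1}Q_q)$ I use the auxiliary estimate $|\Tr(ZB)(g)|\le\|B\|\rho_1(Z)(g)$ valid for positive piecewise trace-class $Z$ and bounded $B$; this follows from writing $Z=Z^{1/2}Z^{1/2}$, applying the Schwartz inequality~\eqref{Schwartz inequality}, and using the identity $\rho_2(Z^{1/2})^2=\rho_1(Z)$ for $Z\ge 0$. Applied to $Z=P_p-P_{2p+1}$ and $B=Q_q$, this gives $|\alpha(\Tr((P_p-P_{2p+1})Q_q))|<\|Q_q\|\epsilon$. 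A completely parallel chain on the $Q$-side then passes from $\Tr(P_pQ_q)$ to $\Tr(Q_q)+\Tr([P_pQ_{q+1},Q_q])$ up to $O(\epsilon)$ errors: use the second hypothesis (with $i=1$) to shift $Q_q\to Q_{2q+1}=Q_qQ_{q+1}$, apply the commutator identity $P_pQ_{q+1}Q_q=Q_pP_pQ_{q+1}+[P_pQ_{q+1},Q_q]$, and invoke the $Q$-analogue of the positivity estimate (obtained by conjugating the positive spectral operator $e^{-q(\tilde d_2+\tilde d_2^*)^2}-e^{-(2q+1)(\tilde d_2+\tilde d_2^*)^2}$ by $F$). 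The third hypothesis $\rho_1(P_p-P_{p+1})<\epsilon$ handles a final index shift arising in the $Q$-chain.

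The main technical obstacle is controlling the residual term $\Tr(XP_1)$. Proposition~\ref{HS trace-class} applied to $P_1 X=P_1^*X$ (since $P_1$ is self-adjoint and piecewise Hilbert--Schmidt) gives $\rho_1(P_1X)\le\rho_2(P_1)\rho_2(X)\le\epsilon\rho_2(P_1)$, so $|\alpha(\Tr(P_1X))|$ is of order $\epsilon$; the discrepancy $\Tr(XP_1)-\Tr(P_1X)=\Tr([X,P_1])$ is itself a commutator, and the algebraic identity
\[
[X,P_1]=[P_{p+1}Q_q,P_p]-[P_pQ_q,P_{p+1}]
\]
combined with trace cyclicity modulo $\widehat{\mathcal{I}}$ (Proposition~\ref{trace property}) lets this residue be absorbed into the target commutators on the right-hand side. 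The $\epsilon^{1/2}$ in the final bound comes from a Cauchy--Schwarz interpolation at the $P/Q$ crossover: the elementary inequality $\|T\|_2^2\le\|T\|\,\|T\|_1$ applied in the piecewise setting converts accumulated linear-in-$\epsilon$ errors, weighted against the uniformly bounded operator norms $\|P_p\|,\|Q_q\|,\|F^{\pm 1}\|$, into a single $C\epsilon^{1/2}$ bound after summing with positive coefficients under $\alpha$.
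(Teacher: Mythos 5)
Your plan has the right flavor (shift indices, extract commutators, accumulate $O(\epsilon^{1/2})$ errors), and the opening steps (positivity of $P_p-P_{2p+1}$, the identity $P_pQ_qP_{p+1}=P_{2p+1}Q_q+[P_pQ_q,P_{p+1}]$, and bounding $\rho_1(XP_1)$) closely parallel the paper's. But the sketch omits the single step that carries the whole lemma: after reaching $\Tr(Q_qP_pQ_{q+1})$, there is no ``parallel chain'' getting you to $\Tr(Q_q)$. The operator $E=Q_q-Q_qP_pQ_{q+1}$ is neither positive nor a commutator, so neither the positivity trick nor cyclicity applies. The paper closes the loop with Lemma~\ref{E} (the almost-idempotent inequality $\Tr(E)\ge-\rho_1(E-E^2)$), expands $E-E^2$ into six terms, and controls each via Lemma~\ref{estimate rho} and the $\rho_2\le\rho_1^{1/2}$ estimates \eqref{P<e}, \eqref{Q<e}. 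This is where the $\epsilon^{1/2}$ actually enters and where the real work lives; without it the argument does not terminate.

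Two further steps in your sketch do not hold up. Your auxiliary estimate $|\Tr(ZB)(g)|\le\|B\|\rho_1(Z)(g)$ does not follow from the Schwartz inequality in the way you describe: you get $\rho_2(Z^{1/2})\rho_2(Z^{1/2}B)$, and $\rho_2(Z^{1/2}B)$ cannot be bounded by $\|B\|\rho_2(Z^{1/2})$ because $\mathbf{1}_{gK}$ sits on the wrong side of $B$ — Section~\ref{Piecewise trace} even gives an explicit counterexample showing $\rho_2(A^*A)\not\le\rho_2(A^*)\|A\|$. (The paper instead bounds $\rho_1((P_{2p+1}-P_p)Q_q)\le C\rho_2(P_{2p+1}-P_p)$ using the structure-specific Lemma~\ref{estimate rho}.) Finally, your treatment of the residual $\Tr(XP_1)$ via the commutator $[X,P_1]=[P_{p+1}Q_q,P_p]-[P_pQ_q,P_{p+1}]$ both over-complicates matters and breaks: the extra commutator $[P_{p+1}Q_q,P_p]$ is not in the target, and Proposition~\ref{trace property} only gives $\Tr([\cdot,\cdot])\equiv 0 \mod\widehat{\mathcal{I}}$, which an arbitrary positive bounded $\alpha$ need not annihilate. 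The paper avoids this entirely by bounding $\rho_1((P_{2p}-P_pQ_qP_p)P_1)$ directly by $C\rho_2(P_{2p}-P_pQ_qP_p)<C\epsilon\mathbbm{1}$ via Lemma~\ref{estimate rho}.
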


\begin{proof}
  By Lemma \ref{estimate rho}, we have
  \begin{align*}
    \Tr(P_p)&\le\rho_1(P_p-P_{2p+1})+\rho_1((P_{2p}-P_pQ_qP_{p})P_1)+\Tr(P_pQ_qP_{p+1})\\
    &\le\rho_1(P_p-P_{2p+1})+C_1\rho_2(P_{2p}-P_pQ_qP_p)+\Tr(P_pQ_qP_{p+1})
  \end{align*}
  for some $C_1>0$ which is independent of the choice of $p,q,\epsilon$. Then we get
  \[
    \alpha(\Tr(P_p))\le\alpha(\Tr(P_pQ_qP_{p+1}))+(1+C_1)\epsilon.
  \]
  By Lemma \ref{estimate rho}, we also have
  \begin{align*}
    \Tr(P_pQ_qP_{p+1})&\le\Tr(P_pQ_q)+\rho_1((P_{2p+1}-P_p)Q_q)+\Tr([P_pQ_q,P_{p+1}])\\
    &\le\Tr(P_pQ_q)+C_2\rho_2(P_{2p+1}-P_p)+\Tr([P_pQ_q,P_{p+1}])
  \end{align*}
  for some $C_2>0$ which is independent of the choice of $p,q,\epsilon$.
  Observe that by Proposition \ref{HS norm},
  \begin{equation}
    \label{P<e}
    \rho_2(P_{p_1}-P_{p_2})=\rho_1((P_{p_1}-P_{p_2})^2)^\frac{1}{2}\le\rho_1(P_{p_1}-P_{p_2})^\frac{1}{2}
  \end{equation}
  for $p_1<p_2$ because $(e^{-p_1x}-e^{-p_2x})^2\le e^{-p_1x}-e^{-p_2x}$. Then we get
  \[
    \alpha(\Tr(P_pQ_qP_{p+1}))\le\alpha(\Tr(P_pQ_q))+\alpha(\Tr([P_pQ_q,P_{p+1}]))+C_2\epsilon^\frac{1}{2}.
  \]
  Since $\rho_2(P_p)\le\|P_{p-1}\|\rho_2(P_1)=\rho_2(P_1)$ by Propositioin \ref{HS norm}, we have
  \begin{align*}
    \Tr(P_pQ_q)&\le\Tr(P_pQ_{2q+1})+\rho_1(P_p(Q_q-Q_{2q+1}))\\
    &\le\Tr(P_pQ_{2q+1})+\rho_2(P_p)\rho_2(Q_q-Q_{2q+1})\\
    &\le\Tr(P_pQ_{2q+1})+\rho_2(P_1)\rho_2(Q_q-Q_{2q+1})\\
    &=\Tr(Q_qP_pQ_{q+1})+\Tr([P_qQ_{q+1},Q_q])+\rho_2(P_1)\rho_2(Q_q-Q_{2q+1}).
  \end{align*}
  Quite similarly to \eqref{P<e}, we can see that
  \begin{equation}
    \label{Q<e}
    \rho_2(Q_{q_1}-Q_{q_2})\le\|F\|^2\|F^{-1}\|^2\rho_1(Q_{q_1}-Q_{q_2})^\frac{1}{2}
  \end{equation}
  for $q_1<q_2$. Then we get
  \[
    \alpha(\Tr(P_pQ_q))\le\alpha(\Tr(Q_qP_pQ_{q+1}))+\alpha(\Tr([P_qQ_{q+1},Q_q]))+C_3\epsilon^\frac{1}{2}
  \]
  for some $C_3>0$ which is independent of the choice of $p,q,\epsilon$. Summarizing, we obtain
  \begin{align*}
    \alpha(\Tr(P_p))&\le\alpha(\Tr(Q_qP_pQ_{q+1}))+\alpha(\Tr([P_pQ_q,P_{p+1}]))\\
    &\quad+\alpha(\Tr([P_pQ_{q+1},Q_q]))+C_4\epsilon^\frac{1}{2}
  \end{align*}
  for some $C_4>0$ which is independent of the choice of $p,q,\epsilon$. Now we set $E=Q_q-Q_qP_pQ_{q+1}$. Then
  \begin{align*}
    E-E^2&=(Q_q-Q_{2q})+(Q_{2q}-Q_q)P_pQ_{q+1}+Q_qP_pQ_1(Q_{2q}-Q_q)\\
    &\quad+Q_q(P_p-P_{2p})Q_{q+1}+Q_q(P_{2p}-P_pQ_qP_p)Q_{q+1}\\
    &\quad+Q_qP_p(Q_q-Q_{2q+1})P_pQ_{q+1}.
  \end{align*}
  and so by Lemma \ref{estimate rho} together with \eqref{P<e} and \eqref{Q<e},
  \[
    \alpha(\rho_1(E-E^2))\le C_5\epsilon^\frac{1}{2}
  \]
  for some $C_5>0$ which is independent of the choice of $p,q,\epsilon$. Hence by Lemma \ref{E}, we get
  \[
    \alpha(\Tr(Q_qP_pQ_{q+1}))=\alpha(\Tr(Q_q))-\alpha(\Tr(E))\le\alpha(\Tr(Q_q))+C_5\epsilon^\frac{1}{2}.
  \]
  Thus the statement follows.
\end{proof}

Now we are ready to prove Theorem \ref{Betti number invariance}.

\begin{proof}
  [Proof of Theorem \ref{Betti number invariance}]
  Let $\alpha\colon\ell^\infty(G)\to\R$ be any bounded positive linear function. By Lemmas \ref{P^2-PQP} and \ref{P-P^2}, there are integer sequences $p_1<p_2<\cdots$ and $q_1<q_2<\cdots$ such that
  \begin{alignat*}{2}
    &\alpha(\rho_1(P_{p_i}-P_{2p_i+j}))<\tfrac{1}{i}\qquad&&\alpha(\rho_1(Q_{q_i}-Q_{2q_i+j}))<\tfrac{1}{i}\\
    &\alpha(\rho_1(P_{p_i}-P_{p_i+1}))<\tfrac{1}{i}&&\rho_2(P_{2p_i}-P_{p_i}Q_{q_i}P_{p_i})<\tfrac{1}{i}\mathbbm{1}
  \end{alignat*}
  where $j=0,1$. By Lemmas \ref{Tr(e)} and \ref{estimate rho} together with \eqref{P<e} and \eqref{Q<e}, it is straightforward to see that $\alpha(\Tr([P_{p_i}Q_{q_i},P_{p_i+1}])$ and $\alpha(\Tr([P_{p_i}Q_{q_i+1},Q_{q_i}]))$ are Cauchy sequences. Then they converges as $i\to\infty$, and so by Proposition \ref{trace property} and the Jordan decomposition mentioned in the proof of Lemma \ref{Tr(e)}, as $i\to\infty$, $\Tr([P_{p_i}Q_{q_i+1},Q_{q_i}])$ and $\Tr([P_{p_i}Q_{q_i+1},Q_{q_i}])$ converge weakly to some elements of $\bar{\mathcal{I}}$ which are independent of the choice of $\alpha$. Thus by Lemmas \ref{Tr(e)}, \ref{lim Q} and \ref{P vs Q}, we get
  \[
    \alpha(b_k(\tilde{d}_1))\ge\alpha(b_k(\tilde{d}_2))+\alpha(\delta)
  \]
  for some $\delta\in\bar{\mathcal{I}}$ which is independent of the choice of $\alpha$. Thus since the substitution of an element of $G$ is a positive bounded linear function on $\ell^\infty(G)$, we obtain
  \[
    b_k(\tilde{d}_1)\ge b_k(\tilde{d}_2)\mod\bar{\mathcal{I}}.
  \]
  By symmetry, we have the reverse inequality, and therefore the proof is finished.
\end{proof}


\section{Morse inequality}\label{Morse inequalities}

In this section, we prove Theorem \ref{main} and its corollaries by collecting the results obtained in the previous sections. Then we pose some questions on the boundedness of a Morse function that we anticipate will be fruitful.


\subsection{Strongly uniform Morse function}

The key technical ingredient of Morse theory is the Morse lemma which states that near a critical point, a Morse function is identified with a nondegenerate quadratic function by choosing an appropriate coordinate. Such a coordinate is not generally a normal coordinate, which is inconvenient for our purpose. Then, instead of Morse lemma, we consider replacement of a given Morse function by a nice Morse function with the same critical points. We show that this is possible for a strongly uniform Morse function, and so we only need to prove Theorem \ref{main} for such a nice Morse function. In the sequel, let $L$ denote the $G$-invariant triangulation of $M$ which is used to construct a fundamental domain $K$.

\begin{definition}
  \label{strong uniformness}
  A bounded Morse function $f\colon M\to\R$ is called \emph{strongly uniform} if it is uniform for $\epsilon>0$ and satisfies the following conditions:
  \begin{enumerate}
    \item there is $\delta_1>0$ such that $|\lambda|\ge\delta_1$, where $\lambda$ ranges over all eigenvalues of the Hessian $\nabla^2f(p)$ for all $p\in\Crit(f)$;

    \item there is $\delta_2>0$ such that $|\nabla f|\ge\delta_2$ on $M-N(\Crit(f);\epsilon_0)$;

    \item the $\epsilon$-neighborhood of any critical point of $f$ is contained in some $n$-simplex of $L$.
  \end{enumerate}
\end{definition}

We set
\[
  \epsilon_0=\frac{\delta_2+8(\sup|\nabla^3f|)\epsilon^2}{\delta_1}.
\]
Since $\delta_2$ and $\epsilon$ can be arbitrarily small, we may assume that
\begin{equation}
  \label{assumptioin epsilon}
  \epsilon_0<\frac{\epsilon}{2}.
\end{equation}
Let $g$ denote the fixed $G$-invariant metric on $M$. We introduce a new $G$-invariant metric on $M$. Take any small $\epsilon>0$. We define a $G$-invariant metric $g(\epsilon)$ on $M$ such that $g(\epsilon)=g$ on $N(L_{n-1};\frac{\epsilon}{2})$ and $g(\epsilon)$ is flat on the complement of $N(L_{n-1};\epsilon)$, where $L_{n-1}$ denotes the $(n-1)$-skeleton of $L$. Indeed, such a $G$-invariant metric $g(\epsilon)$ exists. Let $f\colon M\to\R$ be a uniform Morse function with respect to $\epsilon>0$. Then for any critical point $p$ of $f$, $N(p;\epsilon)$ is in the complement of $N(L_{n-1};\epsilon)$, and so the metric $g(\epsilon)$ is flat on $N(p;\epsilon)$. We remark that all results in the previous sections hold for any $G$-invariant metric on $M$, hence for $g(\epsilon)$. As mentioned above, boundedness of a function is independent of the choice of a $G$-invariant metric on $M$. We will use the fixed metric $g$ for considering upper and lower bounds for a Morse function.

Let us deform a given strongly uniform Morse function on $M$ to the one which is quadratic near each critical point. Let $f\colon M\to\R$ be a uniform Morse function with respect to $\epsilon>0$, and consider the metric $g(\epsilon)$. Take any $p\in\Crit(f)$. Then there is a flat normal coordinate $(x_1,\ldots,x_n)$ around $p$ such that $(0,\ldots,0)$ corresponds to $p$. By rotating the flat normal coordinate if necessary, we have the Taylor expansion of $f$ at $p$ as
\begin{equation}
  \label{Taylor expansion}
  f(x)=f(p)+\frac{1}{2}(\lambda_1x_1^2+\cdots+\lambda_nx_n^2)+r_p(x)
\end{equation}
where $\lambda_1,\ldots,\lambda_n$ are eigenvalues of the Hessian $\nabla^2f(p)$ and $r_p(x)=O(d(x,p)^3)$ as $x\to p$.

\begin{lemma}
  \label{Taylor expansion lemma}
  If $f\colon M\to\R$ is a strongly uniform bounded Morse function with respect to $\epsilon>0$, then for any $p\in\Crit(f)$ and $x\in N(p;\epsilon_0)-N(p;\frac{\epsilon_0}{2})$, the Taylor expansion \eqref{Taylor expansion} satisfies
  \[
    ((\lambda_1x_1)^2+\cdots+(\lambda_nx_n)^2)^\frac{1}{2}-|\nabla r_p(x)|-\frac{3}{\epsilon}|r_p(x)|>\delta_2
  \]
  where $\epsilon_0$ and $\delta_2$ are as in Definition \ref{strong uniformness}.
\end{lemma}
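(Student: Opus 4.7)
The plan is to directly estimate each of the three terms on the left-hand side, combining the Hessian eigenvalue bound from condition (1) of Definition \ref{strong uniformness} with Taylor remainder estimates for $r_p$ and its gradient, and then invoking the defining relation $\delta_1\epsilon_0=\delta_2+8M_3\epsilon^2$ (with $M_3:=\sup|\nabla^3 f|$) together with $\epsilon_0<\epsilon/2$ from \eqref{assumptioin epsilon}.

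Concretely, writing $H=\nabla^2 f(p)$, Taylor's theorem with integral remainder in the flat normal coordinate around $p$ gives the representations
\[
\nabla f(x)-Hx=\int_0^1(1-t)\,\nabla^3 f(tx)[x,x]\,dt,\qquad r_p(x)=\tfrac{1}{2}\int_0^1(1-t)^2\,\nabla^3 f(tx)[x,x,x]\,dt.
\]
From these I would read off the pointwise bounds $|\nabla r_p(x)|\le\tfrac{1}{2}M_3|x|^2$ and $|r_p(x)|\le\tfrac{1}{6}M_3|x|^3$, while condition (1) immediately yields $((\lambda_1x_1)^2+\cdots+(\lambda_n x_n)^2)^{1/2}\ge\delta_1|x|$.

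For $x$ in the annulus $\epsilon_0/2\le|x|<\epsilon_0$, the inequality $\epsilon_0/\epsilon<1/2$ lets me absorb the cubic term into the quadratic one via $\tfrac{3}{\epsilon}|r_p(x)|\le\tfrac{M_3|x|^3}{2\epsilon}\le\tfrac{M_3|x|^2}{4}$. Adding $|\nabla r_p(x)|\le\tfrac{M_3|x|^2}{2}$ yields a total correction at most $\tfrac{3M_3|x|^2}{4}\le\tfrac{3M_3\epsilon_0^2}{4}$, so the left-hand side of the claimed inequality is bounded below by $\delta_1\epsilon_0/2-\tfrac{3}{4}M_3\epsilon_0^2$.

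Finally, substituting $\delta_1\epsilon_0=\delta_2+8M_3\epsilon^2$ and using $\epsilon_0^2<\epsilon^2/4$ once more rewrites this lower bound as
\[
\tfrac{\delta_2}{2}+4M_3\epsilon^2-\tfrac{3}{4}M_3\epsilon_0^2>\tfrac{\delta_2}{2}+\bigl(4-\tfrac{3}{16}\bigr)M_3\epsilon^2,
\]
and the strict inequality $>\delta_2$ follows from the $8M_3\epsilon^2$ surplus engineered into the formula for $\epsilon_0$. The main technical point is precisely this numerical bookkeeping: the coefficients $8$ in the definition of $\epsilon_0$ and $3/\epsilon$ in the statement of the lemma are calibrated so that both the quadratic and cubic Taylor remainders are absorbed while still leaving a strict margin over $\delta_2$. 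The hard part is not conceptual but lies in checking that the chosen buffer $8M_3\epsilon^2$ is large enough to dominate the quadratic correction $\tfrac{3}{4}M_3\epsilon_0^2$ uniformly over the annulus, which \eqref{assumptioin epsilon} guarantees.
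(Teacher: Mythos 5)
Your approach is essentially the same as the paper's: lower-bound the Hessian term by $\delta_1|x|$ using condition~(1), control $r_p$ and $\nabla r_p$ via $\sup|\nabla^3 f|$, then feed in $|x|\ge\epsilon_0/2$ together with the formula for $\epsilon_0$ and \eqref{assumptioin epsilon}. The only methodological difference is that you use Taylor's theorem with integral remainder (giving the sharper $|\nabla r_p|\le\tfrac{1}{2}M_3|x|^2$ and $|r_p|\le\tfrac{1}{6}M_3|x|^3$), whereas the paper applies the mean value theorem iteratively to obtain the coarser $|\nabla r_p|\le M_3\epsilon^2$ and $|r_p|\le M_3\epsilon^3$; both routes end with a lower bound of the form $\delta_1|x|$ minus a term of size $O(M_3\epsilon^2)$, and neither is a structural improvement over the other.

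Your final step, however, is not justified by the computation you display. Substituting $\delta_1\epsilon_0=\delta_2+8M_3\epsilon^2$ gives $\delta_1\epsilon_0/2=\tfrac{\delta_2}{2}+4M_3\epsilon^2$, so your lower bound is
\[
\tfrac{\delta_2}{2}+4M_3\epsilon^2-\tfrac{3}{4}M_3\epsilon_0^2>\tfrac{\delta_2}{2}+\tfrac{61}{16}M_3\epsilon^2,
\]
and this exceeds $\delta_2$ only when $\tfrac{61}{16}M_3\epsilon^2>\tfrac{\delta_2}{2}$, which is not among the hypotheses. So the claim that the ``$8M_3\epsilon^2$ surplus'' supplies the strict margin does not hold; the coefficient $8$ cancels against the correction terms, and what is left is $\tfrac{\delta_2}{2}$, not $\delta_2$. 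You should be aware that the paper's own proof lands in exactly the same place: its chain $\delta_1|x|-4M_3\epsilon^2\ge\delta_1\epsilon_0/2-4M_3\epsilon^2=\tfrac{\delta_2}{2}$ gives only $\tfrac{\delta_2}{2}$, not the stated $>\delta_2$. So you have faithfully reproduced the paper's method together with what looks like a factor-of-two slip in its bookkeeping (the argument would close if, for example, the numerator in the definition of $\epsilon_0$ were $2\delta_2+8(\sup|\nabla^3 f|)\epsilon^2$, or if the inner radius of the annulus were $\epsilon_0$ rather than $\epsilon_0/2$). The one thing to avoid is asserting that the constant $8$ makes the final inequality strict as stated; under the definitions as written, it does not.
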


\begin{proof}
  Let $C=\sup|\nabla^3f|$, and take any $x\in N(p;\epsilon)$. Then by the mean value theorem, we have
  \[
    |\nabla^2r_p(x)|\le|\nabla^3r_p(y)|\epsilon
  \]
  for some $y\in N(p;\epsilon)$ because $\nabla r_p(p)=\nabla^2r_p(p)=0$. Hence $|\nabla^2r_p(x)|\le C\epsilon$ as $\nabla^3r_p(x)=\nabla^3f(x)$ on $N(p;\epsilon)$. Quite similarly, we can get $|\nabla r_p(x)|\le C\epsilon^2$ and $|r_p(x)|\le C\epsilon^3$. Thus by \eqref{assumptioin epsilon}
  \[
    ((\lambda_1x_1)^2+\cdots+(\lambda_nx_n)^2)^\frac{1}{2}-|\nabla r_p(x)|-\frac{3}{\epsilon}|r_p(x)|\ge\delta_1|x|-4C\epsilon^2>\delta_2.
  \]
  Therefore the statement is proved.
\end{proof}

\begin{proposition}
  \label{nice Morse function}
  Let $f\colon M\to\R$ be a strongly uniform bounded Morse function. Then there is a strongly uniform bounded Morse function $\tilde{f}\colon M\to\R$ satisfying that

  \begin{enumerate}
    \item $\Crit_k(\tilde{f})=\Crit_k(f)$ for $k=0,1,\ldots,n$, and

    \item for any $p\in\Crit(f)$,
    \[
      \tilde{f}(x)=\tilde{f}(p)+\frac{1}{2}(\lambda_1x_1^2+\cdots+\lambda_nx_n^2)
    \]
    on $N(p;\frac{\epsilon_0}{2})$, where $\lambda_1,\ldots,\lambda_n$ and $(x_1,\ldots,x_n)$ are as in \eqref{Taylor expansion}.
  \end{enumerate}
\end{proposition}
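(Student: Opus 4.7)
The plan is to construct $\tilde f$ by gluing the quadratic Taylor polynomial $Q_p(x)=f(p)+\tfrac12(\lambda_1x_1^2+\cdots+\lambda_nx_n^2)$ onto $f$ around each critical point $p$ via a smooth radial cutoff, and then to verify that the gluing does not create new critical points by means of Lemma \ref{Taylor expansion lemma}. Throughout I would work with the $G$-invariant metric $g(\epsilon)$, so that a flat normal coordinate $(x_1,\ldots,x_n)$ is available on each $N(p;\epsilon)$ and the Taylor expansion \eqref{Taylor expansion} holds there, writing $f=Q_p+r_p$. By the uniformness of $f$ with parameter $\epsilon$ and the bound $\epsilon_0<\epsilon/2$ from \eqref{assumptioin epsilon}, the balls $N(p;\epsilon)$ are pairwise disjoint as $p$ runs over $\Crit(f)$.

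For the construction itself, I would choose a smooth $\eta\colon[0,\infty)\to[0,1]$ equal to $1$ on $[0,\epsilon_0/2]$ and to $0$ on $[\epsilon,\infty)$ with $|\eta'|\le 3/\epsilon$ (possible because the transition interval has length $\epsilon-\epsilon_0/2>3\epsilon/4$), put $\eta_p(x)=\eta(d(x,p))$, and set
\[
\tilde f:=f-\sum_{p\in\Crit(f)}\eta_p r_p,
\]
with each summand extended by zero outside $N(p;\epsilon)$. The sum is locally finite since the supports are pairwise disjoint. On $N(p;\epsilon_0/2)$ one has $\eta_p\equiv 1$ and hence $\tilde f=f-r_p=Q_p$, giving property (2) of the proposition.

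To check property (1), I would split $M$ into three regions. Inside $N(p;\epsilon_0/2)$ the function $\tilde f=Q_p$ has the unique critical point $p$ with Hessian $\nabla^2f(p)$, so the Morse index is preserved. On the inner annulus $N(p;\epsilon_0)\setminus N(p;\epsilon_0/2)$, differentiating $\tilde f=Q_p+(1-\eta_p)r_p$ and estimating
\[
|\nabla\tilde f|\ge|\nabla Q_p|-|\nabla r_p|-\tfrac{3}{\epsilon}|r_p|>\delta_2>0
\]
via Lemma \ref{Taylor expansion lemma} shows there are no new critical points. On the remaining outer region $M\setminus\bigsqcup_pN(p;\epsilon_0)$, condition (2) of Definition \ref{strong uniformness} gives $|\nabla f|\ge\delta_2$, while the bounds $|r_p|\le C\epsilon^3$ and $|\nabla r_p|\le C\epsilon^2$ established in the proof of Lemma \ref{Taylor expansion lemma} yield $|\nabla(\eta_pr_p)|\le 4C\epsilon^2$, so $|\nabla\tilde f|\ge\delta_2-4C\epsilon^2>0$ after shrinking $\epsilon$ if necessary.

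Finally, the correction $\tilde f-f=-\sum_p\eta_pr_p$ is a sum of functions on the pairwise disjoint sets $N(p;\epsilon)$, each of which has $C^k$-norm uniformly bounded in $p$: the $\eta_p$ are translates of a fixed cutoff, while $r_p=f-Q_p$ inherits uniform $C^k$-control from the boundedness of $f$ together with condition (1) of strong uniformness (which bounds the coefficients of $Q_p$ uniformly), noting also that $\nabla^kr_p=\nabla^kf$ for $k\ge 3$. Hence $\tilde f$ is again bounded, and since $\Crit(\tilde f)=\Crit(f)$ with unchanged Hessians, the uniformness parameter $\epsilon$ and conditions (1), (3) of Definition \ref{strong uniformness} pass to $\tilde f$ automatically, while condition (2) for $\tilde f$ follows from the gradient estimate in the outer region. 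The main obstacle, I expect, is simply the simultaneous bookkeeping of constants — arranging $\epsilon$ small enough to make $\delta_2-4C\epsilon^2>0$ alongside $\epsilon_0<\epsilon/2$ without violating the hypotheses on $f$ — but once these choices are made, the three-region gradient estimate is routine.
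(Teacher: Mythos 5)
Your construction is essentially the paper's: subtract off $\eta_p r_p$ near each critical point, so that $\tilde f$ agrees with the quadratic model $Q_p$ on $N(p;\epsilon_0/2)$, and use Lemma~\ref{Taylor expansion lemma} to bound $|\nabla\tilde f|$ from below on the transition annulus. The one substantive deviation is your cutoff support: you transition over $[\epsilon_0/2,\epsilon]$, whereas the paper's $\rho$ transitions over the shorter interval $[\epsilon_0/2,\epsilon_0]$, so that $\tilde f\equiv f$ on $M\setminus N(\Crit(f);\epsilon_0)$. That tighter choice makes your entire third region disappear: condition (2) of Definition~\ref{strong uniformness} is then inherited verbatim (same $\delta_2$, same $\epsilon_0$), and there is no need for the estimate $|\nabla\tilde f|\ge\delta_2-4C\epsilon^2$ on $N(p;\epsilon)\setminus N(p;\epsilon_0)$ or for the concluding ``shrink $\epsilon$'' step. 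That step is where your argument is genuinely fragile: shrinking $\epsilon$ shrinks $\epsilon_0$, which changes the set $M\setminus N(\Crit(f);\epsilon_0)$ on which $\delta_2$ is defined, so you cannot simply hold $\delta_2$ fixed while letting $\epsilon\to 0$; the requirement $\delta_2>4C\epsilon^2$ is not guaranteed by the hypotheses as stated. Replacing your cutoff by one supported on $[\epsilon_0/2,\epsilon_0]$, as the paper does, removes this issue and also lets Lemma~\ref{Taylor expansion lemma} carry the full burden of the gradient estimate. (In return one must accept the steeper slope bound $3/\epsilon_0$; the paper's lemma statement actually carries a $3/\epsilon$ factor, so your choice is in fact better matched to the lemma as written --- but the geometry is cleaner with the paper's support.) The rest --- disjointness of the balls $N(p;\epsilon)$, smoothness, preservation of critical points and Hessians, and uniform $C^k$-control of $r_p$ from strong uniformness --- is correct and fills in details the paper leaves implicit.
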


\begin{proof}
  Let $\rho\colon\R\to\R$ be an increasing smooth function such that
  \[
    \rho(t)=
    \begin{cases}
      0&t\le\frac{\epsilon_0}{2}\\
      1&t\ge\epsilon_0.
    \end{cases}
    \quad\text{and}\quad 0\le\frac{d\rho}{dt}\le\frac{3}{\epsilon_0}.
  \]
  Now we define
  \[
    \tilde{f}(x)=f(p)+\frac{1}{2}(\lambda_1x_1^2+\cdots+\lambda_nx_n^2)+\rho(d(x,p))r_p(x)
  \]
  for $p\in\Crit(f)$ and $x\in N(p;\epsilon_0)$, and $\tilde{f}=f$ on $M-N(\Crit(f);\epsilon_0)$. Clearly, $\Crit_k(\tilde{f})=\Crit_k(f)$ for $k=0,1,\ldots,n$. By Lemma \ref{Taylor expansion lemma}, $|\nabla\tilde{f}|\ge\delta_2$ on $N(\Crit(f);\epsilon_0)-N(\Crit(f);\frac{\epsilon_0}{2})$, and it is straightforward to check that $|\nabla\tilde{f}|\ge\delta_2$ on $M-N(\Crit(f);\epsilon_0)$. Thus the statement follows.
\end{proof}


\subsection{Proof of Theorem \ref{main}}

Let $f\colon M\to\R$ be a bounded function, and for $t>0$, let $d_t=e^{-tf}de^{tf}$ for the differential $d\colon C^\infty_c(\Lambda)\to C^\infty_c(\Lambda)$. Then as in Example \ref{Witten deformation}, $d_t$ is an abstract differential. Let $D_t=d_t+d_t^*$.

\begin{proposition}
  \label{inequality general}
  Let $\phi\colon\R\to\R$ be a function such that $\phi(x^2)\in\mathcal{G}$. and let $\psi(x)=x\phi(x)^2$. Then for $t>0$, we have
  \[
    \sum_{i=0}^k(-1)^{k-i}\Tr(\psi(D_t^2\vert_{\Lambda^i}))\ge 0\mod\widehat{\mathcal{I}}.
  \]
  for $k=0,1,\ldots,n-1$ and
  \[
    \sum_{i=0}^n(-1)^{n-i}\Tr(\psi(D_t^2\vert_{\Lambda_i}))\equiv 0\mod\widehat{\mathcal{I}}.
  \]
\end{proposition}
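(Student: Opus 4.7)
The plan is to telescope the alternating sum via a McKean--Singer-style manipulation modulo $\widehat{\mathcal{I}}$, combining the Hodge-type splitting $D_t^2|_{\Lambda^k} = d_t d_t^* + d_t^* d_t$ (valid because $d_t^2 = (d_t^*)^2 = 0$) with the trace-commutation property of Proposition \ref{trace property}. Setting
\[
\alpha_k := \Tr(d_t^* d_t \, \phi(D_t^2)^2 |_{\Lambda^k}), \qquad \beta_k := \Tr(d_t d_t^* \, \phi(D_t^2)^2 |_{\Lambda^k}),
\]
we get $\Tr(\psi(D_t^2|_{\Lambda^k})) = \alpha_k + \beta_k$ from $\psi(x) = x\phi(x)^2$ together with the functional calculus of $D_t^2$.

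First I would show $\alpha_k \equiv \beta_{k+1} \mod \widehat{\mathcal{I}}$ for each $k \ge 0$. Since $D_t^2 d_t = d_t d_t^* d_t = d_t D_t^2$, the operator $d_t$ commutes with $\phi(D_t^2)$. Letting $\Pi_i$ denote the orthogonal projection onto $\Lambda^i$, the operator
\[
U_k := \Pi_{k+1} d_t \phi(D_t^2) \Pi_k
\]
then satisfies $U_k^* = \Pi_k d_t^* \phi(D_t^2) \Pi_{k+1}$, $\alpha_k = \Tr(U_k^* U_k)$, and $\beta_{k+1} = \Tr(U_k U_k^*)$, so the congruence follows from Proposition \ref{trace property} once one checks that $U_k \in \Gau(\Lambda)$. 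This last verification is the main technical point: $f$ is bounded, so $d_t = d + t(df\wedge)$ has bounded coefficients, and $d_t e^{\sqrt{-1}tD_t}$ inherits finite propagation from Lemma \ref{finite propagation}. Hence the Sobolev-type argument of Proposition \ref{Gaussian} carries over essentially verbatim, with a first-order differential factor inserted, to show that $d_t \phi(D_t^2)$ is a smoothing operator with Gaussian off-diagonal decay.

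Next, using $\beta_0 = 0$ (as $d_t^*$ vanishes on $\Lambda^0$) and setting $\alpha_{-1} := 0$, we get $\beta_k \equiv \alpha_{k-1}$ for every $k \ge 0$, so
\[
\Tr(\psi(D_t^2|_{\Lambda^k})) \equiv \alpha_k + \alpha_{k-1} \mod \widehat{\mathcal{I}}.
\]
A direct pairing of consecutive terms gives the telescoping identity $\sum_{i=0}^k (-1)^{k-i} (\alpha_i + \alpha_{i-1}) = \alpha_k$, so the desired alternating sum is $\equiv \alpha_k \mod \widehat{\mathcal{I}}$. Since $U_k^* U_k$ is positive and piecewise trace-class, Proposition \ref{trace positive} yields $\alpha_k = \rho_1(U_k^* U_k) \ge 0$, which is the inequality for $0 \le k \le n-1$. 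For $k = n$, $d_t$ maps $\Lambda^n$ into $\Lambda^{n+1} = 0$, so $U_n = 0$ and $\alpha_n = 0$, giving the congruence $\equiv 0 \mod \widehat{\mathcal{I}}$.

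The main obstacle is the verification of Gaussian propagation for operators like $d_t \phi(D_t^2)$: the spectral function $x \mapsto x\phi(x^2)$ need not strictly lie in $\mathcal{G}$, so one cannot simply invoke Proposition \ref{Gaussian} as a black box and must rerun its Sobolev/kernel estimate with a first-order differential factor inserted. Once this is in place, the remaining argument is a purely algebraic telescoping combined with the positivity statement of Proposition \ref{trace positive} and the vanishing $d_t|_{\Lambda^n} = 0$.
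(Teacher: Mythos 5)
Your proposal is correct and takes essentially the same approach as the paper: split $D_t^2 = d_t d_t^* + d_t^* d_t$, use the trace-commutation property modulo $\widehat{\mathcal{I}}$ (Proposition~\ref{trace property}) to shift $d_t^* d_t$-traces down one degree, telescope the alternating sum to $\Tr(d_t^* d_t \phi(D_t^2\vert_{\Lambda^k})^2)$, and conclude by positivity for $k < n$ and by $d_t\vert_{\Lambda^n} = 0$ for $k = n$. You also correctly identify the one real technical point, namely verifying that $d_t\phi(D_t^2\vert_{\Lambda^i})$ has Gaussian propagation, which the paper handles by the same Sobolev-estimate argument with an extra $\widetilde{D}$-factor as in Proposition~\ref{Gaussian}.
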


\begin{proof}
  Quite similarly to the proof of Proposition \ref{algebra}, we can show that the operator $d_t\phi(D_t^2\vert_{\Lambda^i})$ is piecewise Hilbert-Schmidt, so by Proposition \ref{algebra}, $d_t^*\phi(D_t^2\vert_{\Lambda^i})=(d_t\phi(D_t^2\vert_{\Lambda^i}))^*$ is piecewise Hilbert-Schmidt too. Then by Proposition \ref{trace property}, we have
  \begin{align*}
    \Tr(d_td_t^*\phi(D_t^2\vert_{\Lambda^i})^2)&=\Tr(d_t\phi(D_t^2\vert_{\Lambda^{i-1}})d_t^*\phi(D_t^2\vert_{\Lambda^i}))\\
    &\equiv\Tr(d_t^*\phi(D_t^2\vert_{\Lambda^i})d_t\phi(D_t^2\vert_{\Lambda^{i-1}}))\mod\widehat{\mathcal{I}}\\
    &=\Tr(d_t^*d_t\phi(D_t^2\vert_{\Lambda^{i-1}})^2).
  \end{align*}
  Hence we get
  \begin{align*}
    &\sum_{i=0}^k(-1)^{k-i}\Tr(\psi(D_t^2\vert_{\Lambda_i}))\\
    &=\sum_{i=0}^k(-1)^{k-i}(\Tr(d_td_t^*\phi(D_t^2\vert_{\Lambda^i})^2)+\Tr(d_t^*d_t\phi(D_t^2\vert_{\Lambda^i})^2))\\
    &\equiv\sum_{i=0}^k(-1)^{k-i}(\Tr(d_t^*d_t\phi(D_t^2\vert_{\Lambda^{i-1}})^2)+\Tr(d_t^*d_t\phi(D_t^2\vert_{\Lambda^i})^2))\mod\widehat{\mathcal{I}}\\
    &=\Tr(d_t^*d_t\phi(D_t^2\vert_{\Lambda^k})^2).
  \end{align*}
  Since $d_t^*d_t\phi(D_t^2\vert_{\Lambda^k})^2)=(d\phi(D_t^2\vert_{\Lambda^k}))(d\phi(D_t^2\vert_{\Lambda^k}))^*$, the operator $d_t^*d_t\phi(D_t^2\vert_{\Lambda^k})^2$ is positive, hence by Proposition \ref{trace positive}, $\Tr(d_t^*d_t\phi(D_t^2\vert_{\Lambda^k})^2)\ge 0$. Clearly, $d_t^*d_t\phi(D_t^2\vert_{\Lambda^n})^2=0$, and thus the proof is finished.
\end{proof}

\begin{corollary}
  \label{inequality betti}
  Let $\phi$ and $\psi$ be as in Proposition \ref{inequality general}. If $\psi(x^2)\ge e^{-x^2}$, then for any $t>0$, we have
  \[
    \sum_{i=0}^k(-1)^{k-i}\Tr(\psi(D_t^2\vert_{\Lambda^i}))\ge\sum_{i=0}^k(-1)^{k-i}b_i^{(2)}\mod\bar{\mathcal{I}}
  \]
  for $k=0,1,\ldots,n-1$, and
  \[
    \sum_{i=0}^n(-1)^{i}\Tr(\psi(D_t^2\vert_{\Lambda^i}))\approx(-1)^n\chi(M/G)\mod\bar{\mathcal{I}}.
  \]
\end{corollary}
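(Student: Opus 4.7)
The strategy is to combine the telescoping inequality of Proposition \ref{inequality general} with the identification of the $L^2$-Betti number $b_i^{(2)}$ as the piecewise trace of the projection onto $\ker D_t^2\vert_{\Lambda^i}$, crossed through the piecewise normalized Betti numbers of the Witten-deformed complex. The first input is an operator inequality coming from the hypothesis $\psi(x^2)\ge e^{-x^2}$. Restricted to the positive part of the spectrum of $D_t^2\vert_{\Lambda^i}$, the spectral theorem gives $\psi(D_t^2\vert_{\Lambda^i})(I-P_i)\ge e^{-D_t^2\vert_{\Lambda^i}}(I-P_i)$, and since $\psi(0)=0$, this is equivalent to
\[
  \psi(D_t^2\vert_{\Lambda^i})+P_i\ge e^{-D_t^2\vert_{\Lambda^i}},
\]
where $P_i$ is the orthogonal projection onto $\ker D_t^2\vert_{\Lambda^i}$. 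Both sides are smoothing operators with bounded kernels by Lemma \ref{bounded kernel}, so Proposition \ref{trace positive} transfers the inequality to the piecewise trace level.

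Next I identify $\Tr(P_i)$ with $b_i^{(2)}\mathbbm{1}$ modulo $\bar{\mathcal{I}}$ in three steps. Proposition \ref{Tr(e)} together with a cofinality argument along the lines of Lemma \ref{cofinal} gives $\Tr(P_i)\approx b_i(d_t)\mod\bar{\mathcal{I}}$. Because $f$ is bounded, $F:=e^{tf}$ is a bounded self-adjoint automorphism of $\Lambda$ satisfying $dF=Fd_t$, so Theorem \ref{Betti number invariance} applied with $\tilde d_1=d$ and $\tilde d_2=d_t$ yields $b_i(d_t)\approx b_i(d)\mod\bar{\mathcal{I}}$. Finally, Proposition \ref{disassmebly} combined with Roe's identification of his normalized Betti numbers with the $L^2$-Betti numbers \cite{R3} gives $b_i(d)\approx b_i^{(2)}\mathbbm{1}\mod\bar{\mathcal{I}}$.

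Assembling these ingredients produces $\Tr(\psi(D_t^2\vert_{\Lambda^i}))\ge\Tr(e^{-D_t^2\vert_{\Lambda^i}})-b_i^{(2)}\mathbbm{1}\mod\bar{\mathcal{I}}$. The same McKean--Singer telescoping that drives Proposition \ref{inequality general}---the pairing of $\mathrm{im}(d_t)\subset\Lambda^i$ with $\mathrm{im}(d_t^*)\subset\Lambda^{i-1}$ via functional calculus and Proposition \ref{trace property}---shows that $\sum_{i=0}^k(-1)^{k-i}\Tr(e^{-D_t^2\vert_{\Lambda^i}}(I-P_i))\ge 0\mod\bar{\mathcal{I}}$, which yields the first stated inequality of the corollary. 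For $k=n$, Proposition \ref{inequality general} already gives equality modulo $\widehat{\mathcal{I}}$, and Atiyah's $L^2$-index theorem $\chi(M/G)=\sum_{i=0}^n(-1)^i b_i^{(2)}$ converts this to the two-sided $\approx$ statement. The main obstacle is the bookkeeping between the smaller ideal $\widehat{\mathcal{I}}$, in which the telescoping of Proposition \ref{inequality general} lives, and the larger $\bar{\mathcal{I}}$, in which the Betti-number identification sits, together with the careful recovery of the kernel-projection contribution that $\psi(0)=0$ causes $\Tr(\psi(D_t^2\vert_{\Lambda^i}))$ to miss; the hypothesis $\psi(x^2)\ge e^{-x^2}$ is precisely what compensates for this loss.
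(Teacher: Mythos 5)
Your overall scaffolding---telescoping from Proposition \ref{inequality general}, invariance via Theorem \ref{Betti number invariance} with the Witten conjugation, and $\chi(M/G)=\sum(-1)^ib_i^{(2)}$ to close the $k=n$ case---matches the paper's. But the route you take through the kernel projection $P_i$ has a genuine gap, and it is precisely the gap that the paper's setup is engineered to avoid.

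The problem is that you treat $P_i$, the projection onto $\ker(D_t^2\vert_{\Lambda^i})$, as an operator whose piecewise trace is well-defined and controllable. Nothing in the paper licenses this. $P_i$ is not of the form $\phi(\widetilde D)$ for $\phi\in\mathcal{G}$, it is not known to lie in $\Gau(\Lambda)$ (its Schwartz kernel need not decay at a Gaussian rate), and Lemma \ref{bounded kernel}, which you invoke, only applies to Schwartz-class functions of $\widetilde D$, not to spectral projections. Proposition \ref{Tr(e)} also does not give you $\Tr(P_i)\approx b_i(d_t)$: it says that $\Tr(e^{-s\widetilde D^2\vert_{\Lambda^k}})$ converges \emph{weakly in $\ell^\infty(G)$} to $b_k(\tilde d)$ as $s\to\infty$, which is a statement about the limit of a sequence of bounded functions, not an identification of the piecewise trace of the strong operator limit $P_i$ of $e^{-s\widetilde D^2}$. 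Piecewise trace is not continuous in the strong operator topology, so the identification you assert would require its own argument. The same issue recurs in your claim that ``the same McKean--Singer telescoping'' gives $\sum_{i=0}^k(-1)^{k-i}\Tr(e^{-D_t^2\vert_{\Lambda^i}}(I-P_i))\ge 0 \bmod\bar{\mathcal{I}}$: the function $e^{-x}(1-\mathbbm{1}_{\{0\}}(x))$ is not of the form $x\phi(x)^2$ with $\phi(x^2)\in\mathcal{G}$, so Proposition \ref{inequality general} does not apply, and making the pairing of $\overline{\mathrm{im}\,d_t}$ with $\overline{\mathrm{im}\,d_t^*}$ rigorous would again require controlling piecewise traces of spectral projections.

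The paper circumvents all of this by never taking the $s\to\infty$ limit at the operator level. It first establishes (modulo $\widehat{\mathcal{I}}$) the partial-sum inequality against $\Tr(e^{-sD_t^2\vert_{\Lambda^i}})$ for \emph{finite} $s$, where everything is a bona fide smoothing operator with Gaussian propagation, then takes $s\to\infty$ only at the level of bounded functions on $G$, using the weak-limit statement of Proposition \ref{Tr(e)} and the infimum definition of $b_k(\tilde d)$. This gives $\sum(-1)^{k-i}b_i(d_t)$ on the right without ever touching $P_i$, and Theorem \ref{Betti number invariance} then converts $b_i(d_t)$ into $b_i^{(2)}$. In short: telescope first at finite heat-time, then pass to the weak limit; do not pass to the spectral-projection limit first. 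One small point in your favour: the correct conjugating automorphism is $F=e^{tf}$ with $\tilde d_1=d$, $\tilde d_2=d_t$ (so that $dF=Fd_t$), as you wrote; the paper's $F=e^{-tf}$ appears to be a sign slip.
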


\begin{proof}
  By Proposition \ref{inequality general}, for any $s>2$, we have
  \[
    \sum_{i=0}^k(-1)^{k-i}\Tr(\psi(D_t^2\vert_{\Lambda^i}))\ge\sum_{i=0}^k(-1)^{k-i}\Tr(e^{-sD_t^2\vert_{\Lambda^i}})\mod\widehat{\mathcal{I}}.
  \]
  Then by Lemma \ref{Tr(e)}, we take $s\to\infty$ to get
  \[
    \sum_{i=0}^k(-1)^{k-i}\Tr(\psi(D_t^2\vert_{\Lambda^i}))\ge\sum_{i=0}^k(-1)^{k-i}b_i(d_t)\mod\bar{\mathcal{I}}.
  \]
  Now we can apply Theorem \ref{Betti number invariance} to $F=e^{-tf},\tilde{d}_1=d,\,\tilde{d}_2=d_t$, so that $b_i(d_t)\approx b_i(d)=b_i^{(2)}\mod\bar{\mathcal{I}}$. Thus the inequalities in the statement follow. Quite similarly, we can get
  \[
    \sum_{i=0}^n(-1)^{i}\Tr(\psi(D_t^2\vert_{\Lambda^i}))\approx\sum_{i=0}^n(-1)^{n-i}b_i^{(2)}\mod\bar{\mathcal{I}}.
  \]
  Thus the proof is finished by the fact that $\sum_{i=0}^n(-1)^ib_i^{(2)}=\chi(M/G)$.
\end{proof}

Now we suppose that $f\colon M\to\R$ is a strongly uniform bounded Morse function, and let $\tilde{f}\colon M\to\R$ be as in Proposition \ref{nice Morse function}. We set $d_t=e^{-t\tilde{f}}de^{t\tilde{f}}$ and $D_t=d_t+d_t^*$.

\begin{lemma}
  \label{complement}
  Let $\phi\colon\R\to\R$ be a smooth function such that $\widehat{\phi}$ is supported within $(-\frac{\epsilon_0}{4},\frac{\epsilon_0}{4})$. Then on the complement of $N(\Crit(\tilde{f});\frac{\epsilon_0}{2})$, $\phi(D_t^2)$ converges uniformly to zero as $t\to\infty$.
\end{lemma}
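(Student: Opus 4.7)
The approach combines the Bochner--Witten decomposition of $D_t^2$ with the finite propagation speed for $e^{isD_t}$ (Lemma \ref{finite propagation}) and the Schwartz decay of $\phi$ (which holds because $\widehat\phi \in C_c^\infty$). Expanding $D_t = D + tR$ with $R = d\tilde{f}\wedge - d\tilde{f}\lrcorner$, so that $R^2 = |d\tilde{f}|^2\cdot\mathrm{id}$ by the Clifford relations and $\{D,R\}$ is a uniformly bounded zeroth-order operator (because $\tilde{f}$ is bounded), yields
\[
D_t^2 = D^2 + t\{D,R\} + t^2|d\tilde{f}|^2.
\]
Combining Definition \ref{strong uniformness}(2) with the local quadratic form of $\tilde{f}$ in Proposition \ref{nice Morse function}(2), there is $\delta_0 > 0$ such that $|\nabla\tilde{f}| \ge \delta_0$ on $M\setminus N(\Crit(\tilde{f}); \epsilon_0/4)$. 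For any $u$ supported in that region, this gives the G\aa rding-type bound $\langle u, D_t^2 u\rangle \ge (t^2\delta_0^2 - Ct)\|u\|^2$, which iterates to $\langle u, D_t^{2N} u\rangle \ge (t^2\delta_0^2 - Ct)^N\|u\|^2$ for each $N$.

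Second, I would exploit $\mathrm{supp}(\widehat\phi)\subset(-\epsilon_0/4, \epsilon_0/4)$ together with Lemma \ref{finite propagation}: for any $x \in M\setminus N(\Crit(\tilde{f}); \epsilon_0/2)$ and any $u$ supported in $B(x, \epsilon_0/8)$, the propagated section $e^{isD_t}u$ remains supported in $M\setminus N(\Crit(\tilde{f}); \epsilon_0/8)$ whenever $|s| \le \epsilon_0/4$, still inside the Bochner-good region. Representing $\phi(D_t^2)$ via the Helffer--Sj\"ostrand formula
\[
\phi(D_t^2) = \tfrac{1}{\pi}\int_\C \bar\partial\widetilde{\phi}(z)\,(D_t^2 - z)^{-1}\,dz\wedge d\bar z,
\]
with $\widetilde\phi$ an almost-analytic extension of $\phi$, and estimating the localized resolvent via the lower bound above combined with the Agmon-type exponential decay of $(D_t^2 - z)^{-1}(x,y)$ in $d(x,y)$, one obtains $\|\phi(D_t^2) u\|_{L^2} \le C_N t^{-N}\|u\|_{L^2}$ for every $N$. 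A Sobolev embedding of the kind used in the proof of Proposition \ref{Gaussian} then upgrades this $L^2$ estimate to a pointwise bound on the kernel $k_{\phi(D_t^2)}(x, y)$, yielding the asserted uniform convergence to zero on $M\setminus N(\Crit(\tilde{f}); \epsilon_0/2)$ as $t \to \infty$.

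The main obstacle is bridging the finite propagation of the wave operator $e^{isD_t}$ (a property of $D_t$) with the functional calculus applied to $D_t^2$: if one sets $\widetilde\phi(x) = \phi(x^2)$ so that $\phi(D_t^2) = \widetilde\phi(D_t)$, the Fourier transform $\widehat{\widetilde\phi}$ need not be compactly supported even when $\widehat\phi$ is, so a direct Fourier-inversion argument with $|s| < \epsilon_0/4$ does not apply. Routing through the Helffer--Sj\"ostrand formula sidesteps this obstruction, but carrying out the quantitative resolvent kernel estimate $|(D_t^2 - z)^{-1}(x,y)|\lesssim \exp(-c\,t\,d(x,y))$ on the complement of the critical set, with constants uniform as $|\mathrm{Im}\,z| \to 0$ along directions where the almost-analytic cutoff decays, is where the bulk of the analytic work will lie.
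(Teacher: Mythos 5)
Your proposal shares the paper's starting point (the Bochner--Witten decomposition $D_t^2 = D^2 + t\{D,R\} + t^2|\nabla\tilde f|^2$, the lower bound $|\nabla\tilde f|\ge\delta$ on the complement of $N(\Crit(\tilde f);\epsilon_0/2)$, the iterated G\aa rding inequality, and the final Sobolev upgrade from $L^2$ to pointwise kernel control), but then diverges on the central step. The paper's proof is deliberately short: it asserts the $L^2\to L^2$ estimate $\|\phi(D_t^2)u\|\le q(t)\|u\|$ with $q(t)$ rapidly decaying by citing Roe's Lemma~14.6 verbatim, then upgrades it to $\|\phi(D_t^2)u\|_{L^\infty}\le p(t)\|u\|_{L^1}$ using two things: the Sobolev inequality $\|u\|^2_{L^\infty}\le C\sum_{i\le\lceil n/2\rceil}\|D^iu\|^2$ (transferred to $D_t$ via Lemma~\ref{D estimate}) and elliptic regularity $\|(1+D_t^2)^{-1}u\|_{k+2}\le p_k(t)\|u\|_k$ with polynomial constants, so that the polynomial losses are swallowed by the rapid decay of $q(t)$. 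Roe's mechanism is precisely the one you identify as the ``obstacle'': finite propagation of $e^{isD_t}$ keeps $\phi(D_t)u$ (and all its $D_t$-derivatives, since $D_t$ is local) supported in a region where the Witten estimate applies, and iterating $\langle v, D_t^2v\rangle\ge c(t)\|v\|^2$ gives the rapid $L^2$ decay. Your pivot to the Helffer--Sj\"ostrand formula is a genuinely different route, and it is more expensive: the quantitative resolvent kernel bound $|k_{(D_t^2-z)^{-1}}(x,y)|\lesssim e^{-ct\,d(x,y)}$ uniform as $\mathrm{Im}\,z\to 0$ is an Agmon-type estimate that you do not prove and that is not needed for the paper's approach. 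You flag this honestly, but as written it is a real gap, and the paper avoids it entirely.

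Your observation that $\widehat{\phi(\cdot^2)}$ need not be compactly supported when $\widehat\phi$ is, is correct and points at a notational looseness in the lemma's hypothesis (the finite-propagation argument requires control of the Fourier transform of the function applied to $D_t$, i.e.\ of $\phi(\cdot^2)$, not of $\phi$). The paper sidesteps this by invoking Roe's Lemma~14.6 as a black box, which is stated for a Schwartz function of $D_t$ and does not actually require a compactly supported Fourier transform for the $L^2$ estimate; the compact-support hypothesis is used later, in Lemma~\ref{critical point}, for the spatial localization near critical points. So the concern you raise is a fair criticism of the hypothesis' bookkeeping, but it does not necessitate the Helffer--Sj\"ostrand detour: interpreting the hypothesis as a decay/support condition on the Fourier transform of the function actually applied to $D_t$ restores the direct wave-equation argument that the paper relies on.
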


\begin{proof}
  The proof is a modification of that for \cite[Lemma 14.6]{R4}. It is easy to see that $|\nabla\tilde{f}|\ge\delta$ for some $\delta>0$ on the complement of $N(\Crit(\tilde{f});\frac{\epsilon_0}{2})$. Let $u$ be any differential form compactly supported within the complement of $N(\Crit(\tilde{f});\frac{\epsilon_0}{2})$. Then as in the proof of \cite[Lemma 14.6]{R4}, there is an inequality
  \begin{equation}
    \label{D_t estimate}
    \|\phi(D_t^2)u\|\le q(t)\|u\|
  \end{equation}
  where $q(t)$ decays rapidly as $t\to\infty$ and is independent of the choice of $u$. As mentioned in the proof of \cite[(2.8) Proposition]{R1}, the Sobolev inequality
  \[
    \|u\|^2_{L^\infty}\le C_1\sum_{i=0}^{\lceil\frac{n}{2}\rceil}\|D^iu\|^2
  \]
  holds, where $C_1>0$ is independent of the choice of $u$. Then by Lemma \ref{D estimate}, the Sobolev inequality
  \[
    \|u\|^2_{L^\infty}\le C_2(t)\sum_{i=0}^{\lceil\frac{n}{2}\rceil}\|D^i_tu\|^2
  \]
  also holds, where $C_2(t)>0$ is independent of the choice of $u$ (but possibly dependent on $t$). On the other hand, since $D_t^2=D^2+t(\nabla^2\tilde{f})+t^2|\nabla\tilde{f}|^2$ and we are assuming that $\nabla\tilde{f}$ and $\nabla^2\tilde{f}$ are bounded, we have
  \[
    \|(1+D_t^2)^{-1}u\|_{k+2}\le p_k(t)\|u\|_k
  \]
  for some polynomial $p_k(t)$ which is independent of the choice of $u$, where $\|\cdot\|_k$ denotes the $k$-th Sobolev norm with respect to $D$. Then we can argue as in \cite[Proof of Lemma 14.6]{R4} to improve \eqref{D_t estimate} as
  \[
    \|\phi(D_t^2)u\|_{L^\infty}\le p(t)\|u\|_{L^1}
  \]
  where $p(t)$ decays rapidly as $t\to\infty$ and is independent of the choice of $u$. Thus the statement follows.
\end{proof}

Let $\tilde{\rho}\colon\R\to\R$ be a smooth function supported within $[-\frac{\epsilon_0}{2},\frac{\epsilon_0}{2}]$ such that $\tilde{\rho}(0)=1$. We define a smooth function $\rho\colon M\to\R$ by
\[
  \rho(x)=\sum_{p\in\Crit(f)}\tilde{\rho}(d(x,p)).
\]

\begin{lemma}
  \label{critical point}
  Let $\phi$ be as in Proposition \ref{complement}. Then  $\Tr(\rho\phi(D_t^2\vert_{\Lambda_k}))$ converges uniformly to $c_k$ as $t\to\infty$.
\end{lemma}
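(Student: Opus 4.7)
The plan is to decompose the piecewise trace into contributions concentrated at each critical point and then evaluate each contribution using the known spectrum of the harmonic-oscillator-type model on $\R^n$.

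First I would exploit the support of $\rho$. By construction $\rho=\sum_{p\in\Crit(f)}\tilde\rho(d(\cdot,p))$ with each summand supported in $N(p;\epsilon_0/2)$, and by uniformness these neighborhoods are pairwise disjoint. Condition~(3) of Definition~\ref{strong uniformness} together with \eqref{assumptioin epsilon} forces $N(p;\epsilon_0/2)$ to lie inside a single open $n$-simplex of $L$, which belongs to a unique translate $gK$ of the fundamental domain. Hence by Lemma~\ref{trace integral}
\[
\Tr(\rho\phi(D_t^2|_{\Lambda^k}))(g)=\sum_{p\in\Crit(f)\cap gK}I_p(t),\qquad I_p(t):=\int_{N(p;\epsilon_0/2)}\tilde\rho(d(x,p))\,\mathrm{tr}(k_{\phi(D_t^2|_{\Lambda^k})}(x,x))\vol(x),
\]
and the number of summands is bounded uniformly in $g$.

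Next I would set up a local model. With the metric $g(\epsilon)$, the normal coordinate $(x_1,\ldots,x_n)$ on $N(p;\epsilon_0)$ is flat, and by Proposition~\ref{nice Morse function} we have $\tilde f=\tilde f(p)+\tfrac12\sum_i\lambda_i x_i^2$ on $N(p;\epsilon_0/2)$. Extending $f_p(x)=\tfrac12\sum_i\lambda_i x_i^2$ globally on $(\R^n,g_{\mathrm{Eucl}})$, let $\mathcal{L}_t^{(p)}=(d_t^{(p)}+(d_t^{(p)})^*)^2$ be the model Witten-deformed Laplacian with $d_t^{(p)}=e^{-tf_p}d\,e^{tf_p}$. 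Condition~(1) and boundedness of $\nabla^2f$ give $\delta_1\le|\lambda_i|\le\sup|\nabla^2f|$, so the spectral data of $\mathcal{L}_t^{(p)}$ are uniformly controlled in $p$.

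The crucial comparison step is to show that for all $x\in N(p;\epsilon_0/2)$,
\[
\bigl|k_{\phi(D_t^2|_{\Lambda^k})}(x,x)-k_{\phi(\mathcal{L}_t^{(p)}|_{\Lambda^k})}(x,x)\bigr|\longrightarrow 0
\]
uniformly in $p$ and $x$ as $t\to\infty$. Since $D_t^2$ and $\mathcal{L}_t^{(p)}$ coincide as differential operators on $N(p;\epsilon_0)$, this reduces to bounding the contribution from outside that ball. Writing $\phi(D_t^2)=\tfrac{1}{2\pi}\int\widehat{\phi(\cdot^2)}(s)e^{isD_t}\,ds$, applying the finite propagation speed of $D_t$ and $\mathcal{L}_t^{(p)}$ (Lemma~\ref{finite propagation}), and using the Gaussian decay of $\widehat{\phi(\cdot^2)}^{(i)}$ supplied by $\phi(x^2)\in\mathcal{G}$, one adapts the Sobolev/wave-equation argument from the proof of Lemma~\ref{complement} (modeled on \cite[Lemma~14.6]{R4}) to obtain this uniform estimate. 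This is the main technical obstacle, as $\widehat{\phi(\cdot^2)}$ is not compactly supported even when $\widehat\phi$ is.

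Finally, I would compute the model contribution explicitly. The operator $\mathcal{L}_t^{(p)}|_{\Lambda^k}$ is a direct sum of shifted harmonic oscillators whose nonzero spectrum is bounded below by $Ct$ for some $C>0$ depending only on $\delta_1$; the kernel of $\mathcal{L}_t^{(p)}|_{\Lambda^k}$ is trivial unless $k=\mathrm{ind}(p)$, in which case it is one-dimensional and spanned by a Gaussian $k$-form $\omega_p$ of unit $L^2$-norm concentrated at the origin at scale $t^{-1/2}$. After normalizing $\phi(0)=1$, the operator $\phi(\mathcal{L}_t^{(p)}|_{\Lambda^k})$ converges strongly to the rank-one projection onto $\omega_p$ when $k=\mathrm{ind}(p)$ and to zero otherwise. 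Since $\tilde\rho(0)=1$ and $\omega_p$ concentrates at the origin, this yields $I_p(t)\to\delta_{k,\mathrm{ind}(p)}$, uniformly in $p$. Summing gives $\Tr(\rho\phi(D_t^2|_{\Lambda^k}))(g)\to|\Crit_k(f)\cap gK|=c_k(g)$ uniformly in $g$, as claimed.
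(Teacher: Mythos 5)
Your proposal follows the same overall strategy as the paper's proof: split $\rho=\sum_p\rho_p$ into pieces supported in the disjoint balls $N(p;\epsilon_0/2)$, observe that on each such ball the metric $g(\epsilon)$ is flat and $\tilde f$ is exactly quadratic by Proposition~\ref{nice Morse function}, identify the local operator with a harmonic-oscillator model whose spectrum $\{t\lambda(a,b)\}$ is explicit, and extract $c_k$ from the rapid decay of $\phi$ on the excited states together with the $t^{-1/2}$-scale concentration of the ground-state Gaussian; strong uniformness gives the uniform spectral gap $\delta_1\le|\lambda_i|$ needed to make the convergence uniform over $p$. This matches the paper's proof, which packages the same computation by citing Roe's~[Lemma 14.11].

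The one place your account drifts from the stated hypotheses is the "crucial comparison step." The lemma inherits the hypothesis of Lemma~\ref{complement}, namely that $\widehat\phi$ is supported in $(-\epsilon_0/4,\epsilon_0/4)$; it does \emph{not} assume $\phi(x^2)\in\mathcal{G}$. You correctly notice that compact support of $\widehat\phi$ does not pass to $\widehat{\phi(\cdot^2)}$, and you then plug in the Gaussian-propagation condition $\phi(x^2)\in\mathcal{G}$ to close the localization argument — but that is a substitute assumption, not one the lemma grants you, so as written the comparison step rests on a hypothesis the statement does not provide. The paper sidesteps this by directly invoking Roe and does not spell out the localization mechanism, so the loose end you spotted is present in the source as well; still, in a self-contained proof it would need to be squared with the actual hypothesis (or the hypothesis would need to be restated in terms of $\widehat{\phi(\cdot^2)}$). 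A small additional slip: $D_t^2$ and your model $\mathcal{L}_t^{(p)}$ coincide on $N(p;\epsilon_0/2)$, not on $N(p;\epsilon_0)$, since $\tilde f$ is only quadratic on the smaller ball.
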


\begin{proof}
  Take any $p\in\Crit_k(\tilde{f})$. Then $p\in g(\mathrm{Int}(K))$ for some $g\in G$, so $g$ is unique. Let $\rho_p(x)=\tilde{\rho}(d(x,p))$. Recall that we are working with the metric $g(\epsilon)$ on $M$, which is flat on $N(p;\epsilon)$ for each $p\in\Crit(\tilde{f})$. Let $\lambda_1,\ldots,\lambda_n$ be the eigenvalues of the Hessian $\nabla^2\tilde{f}(p)$, and let
  \[
    \lambda(a,b)=\sum_{i=0}^n(|\lambda_i|(1+2a_i)+\lambda_ib_i)
  \]
  for $a=\{a_1,\ldots,a_n\}$ with nonnegative integers $a_i$ and $b=\{b_1,\ldots,b_n\}$ with $b_i=\pm 1$ such that exactly $k$ of $b_i$ are $+1$. Then as in the proof of \cite[Lemma 14.11]{R4}, $\phi(t\lambda(a,b)^\frac{1}{2})$ is a spectrum of $\phi(D_t^2\vert_{\Lambda^k})$ such that
  \[
    \Tr(\rho_p\phi(D_t^2\vert_{\Lambda^k}))(g)=\sum_{a,b}\phi(t\lambda(a,b)^\frac{1}{2})\langle\rho_pe(a,b),e(a,b)\rangle
  \]
  where $e(a,b)$ is a normalized eigenvector corresponding to $\lambda(a,b)$. Thus since $\phi$ is rapidly decaying, we get
  \[
    \lim_{t\to\infty}\sum_{a,b}\phi(t\lambda(a_i,b_i)^\frac{1}{2})=
    \begin{cases}
      1&p \text{ has index }k\\
      0&\text{otherwise}
    \end{cases}
  \]
  where only $\lambda(a,b)=0$ contributes to the first equality. As in the proof of \cite[Lemma 14.11]{R4}, if $\lambda(a,b)=0$, then
  \[
    e(a,b)=(\pi^{-\frac{n}{4}}t^\frac{n}{2}(\lambda_1\cdots\lambda_n)^\frac{n}{2})e^{-t\frac{1}{2}(\lambda_1x_1^2+\cdots+\lambda_nx_n^2)}dx_1\cdots dx_n
  \]
  hence $\langle\rho_pe(a,b),e(a,b)\rangle\to$ as $t\to 0$. On the other hand, we have
  \[
    \Tr(\rho_p\varphi(D_t^2\vert_{\Lambda^k}))(h)=0
  \]
  for $h\ne g$. Thus $\Tr(\rho_p\phi(D_t^2\vert_{\Lambda_k}))$ converges uniformly to a function $c_k^p\colon G\to\R$ given by
  \[
    c_k^p(h)=
    \begin{cases}
      1&p\text{ has index }k\text{ and }h=g\\
      0&\text{otherwise}.
    \end{cases}
  \]
  Since the Morse function $f$ is strongly uniform, we have
  \[
    \inf_{p\in\Crit(f)}|\lambda|=\delta_1>0
  \]
  where $\lambda$ ranges over all eigenvalues of the Hessian $\nabla^2\tilde{f}(p)$. Then the uniformness of the above convergence is also uniform as $p$ ranges over all $p\in\Crit(\tilde{f})$. Note that $\sum_{p\in\Crit(\tilde{f})}c_k^p=c_k$ and
  \[
    \Tr(\rho\varphi(D_t^2\vert_{\Lambda^k}))=\sum_{p\in\Crit(f)}\Tr(\rho_p\varphi(D_t^2\vert_{\Lambda^k}))
  \]
  where $\mathrm{supp}(\rho_p)\cap\mathrm{supp}(\rho_q)=\emptyset$ for $p\ne q\in\Crit(f)$. Thus the statement follows.
\end{proof}

Now we are ready to prove Theorem \ref{main}

\begin{proof}
  [Proof of Theorem \ref{main}]
  Combine Corollary \ref{inequality betti} and Lemmas \ref{complement} and \ref{critical point}.
\end{proof}

We prove Corollaries \ref{mean value} and \ref{betti critical points}.

\begin{proof}
  [Proof of Corollary \ref{mean value}]
  As in the proof of Proposition \ref{amenability}, we have $\mu(\bar{\mathcal{I}})=0$. Then the statement follows from Theorem \ref{main} and the positivity of $\mu$.
\end{proof}

To prove Corollary \ref{betti critical points}, we need the following proposition \cite[Proposition 2.3]{KKT}.

\begin{proposition}
  \label{l(G)}
  Let $G$ be a finitely generated infinite group. If $\phi\in\ell^\infty(G)$ satisfies $\phi(g)=0$ for all but finitely many $g\in G$, then $\phi\equiv 0\mod\mathcal{I}$.
\end{proposition}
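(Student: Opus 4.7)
The plan is to reduce to showing $\delta_g\in\mathcal{I}$ for every $g\in G$, where $\delta_g$ denotes the indicator function of $\{g\}$; since any $\phi$ supported on a finite set is a finite linear combination of such $\delta_g$, this suffices. The geometric picture is that $\delta_g$ should be realizable as the boundary, in the spirit of uniformly finite homology, of an infinite edge path from $g$ to infinity in the Cayley graph of $G$; the algebraic point is that by regrouping this \emph{a priori} infinite sum by edge direction one can rewrite it as a \emph{finite} sum of elements of $\mathcal{I}$.

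Fix $g\in G$ and a finite symmetric generating set $S$ of $G$. Since $G$ is finitely generated and infinite, its Cayley graph with respect to $S$ is locally finite and infinite, so by K\"onig's lemma there is an infinite self-avoiding path $g=g_0,g_1,g_2,\ldots$ with $g_{i+1}=g_is_{i+1}$ for some $s_{i+1}\in S$. For each $h\in S$, set $A_h=\{g_i\mid s_{i+1}=h\}$; the sets $A_h$ are pairwise disjoint with union $\{g_i\mid i\ge 0\}$, and each $\mathbf{1}_{A_h}$ lies in $\ell^\infty(G)$.

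The key computation is that at every fixed $x\in G$ the telescoping sum $\sum_{i\ge 0}(\delta_{g_{i+1}}(x)-\delta_{g_i}(x))$ has only finitely many nonzero terms (at most two) and evaluates pointwise to $-\delta_g(x)$. Grouping the same sum by direction in $S$, and using the identity $(h^{-1}\cdot\mathbf{1}_{A_h})(y)=\mathbf{1}_{A_h}(yh^{-1})=\mathbf{1}_{A_hh}(y)=\sum_{x\in A_h}\delta_{xh}(y)$, one obtains
\[
\sum_{i\ge 0}(\delta_{g_{i+1}}-\delta_{g_i})=\sum_{h\in S}\sum_{x\in A_h}(\delta_{xh}-\delta_x)=\sum_{h\in S}\bigl(h^{-1}\cdot\mathbf{1}_{A_h}-\mathbf{1}_{A_h}\bigr),
\]
and therefore
\[
\delta_g=\sum_{h\in S}\bigl(\mathbf{1}_{A_h}-h^{-1}\cdot\mathbf{1}_{A_h}\bigr)\in\mathcal{I},
\]
which is a finite linear combination of generators of $\mathcal{I}$.

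The one genuine subtlety is that one cannot in general write $\delta_g=\phi-h\cdot\phi$ for a single $h$: in a finitely generated infinite torsion group every element has finite order, and then a telescoping obstruction around the finite orbit $\{g,gh,\ldots,gh^{n-1}\}$ prevents such a $\phi\in\ell^\infty(G)$ from existing. Grouping the contributions along the geodesic ray according to the entire finite generating set $S$ is precisely what circumvents this obstacle. Alternatively, when $G$ is non-amenable one already has $\ell^\infty(G)=\mathcal{I}$ by Proposition~\ref{amenability}, so the proposition is automatic in that case; the argument above handles both regimes uniformly.
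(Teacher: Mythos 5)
Your proof is correct. The paper itself does not prove this proposition but cites it as Proposition~2.3 of the companion paper [KKT], so a line-by-line comparison is not possible here; nevertheless, the argument you give — realizing $\delta_g$ as the boundary of a self-avoiding ray escaping to infinity in the Cayley graph, and then regrouping the infinite telescoping sum by edge direction $h\in S$ so that it collapses to the \emph{finite} linear combination $\sum_{h\in S}(\mathbf{1}_{A_h}-h^{-1}\cdot\mathbf{1}_{A_h})$ of generators of $\mathcal{I}$ — is the canonical ``tail'' construction from Block--Weinberger uniformly finite homology, and is almost certainly the argument used in the cited reference. The details check out: the pointwise telescope uses that the ray is self-avoiding, so each $x\in G$ appears in at most two consecutive terms; the identity $(h^{-1}\cdot\mathbf{1}_{A_h})=\mathbf{1}_{A_hh}$ is consistent with the paper's convention $(g\cdot\phi)(x)=\phi(xg)$; the sets $A_hh$, $h\in S$, are pairwise disjoint (again by self-avoidance), so the grouped sum really equals $\mathbf{1}_{\{g_i:i\ge0\}}-\mathbf{1}_{\{g_i:i\ge1\}}=\delta_g$; and both hypotheses on $G$ are used exactly where they should be (finite generation to make $S$, hence the sum, finite, and infiniteness to produce the ray). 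Your closing remark explaining why a single generator $h$ cannot suffice in a torsion group, so that the regrouping over all of $S$ is genuinely needed, is a correct and illuminating observation.
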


\begin{proof}
  [Proof of Corollary \ref{betti critical points}]
  We can easily deduce from Theorem \ref{main} the weak Morse inequality
  \[
    c_k\ge b_k^{(2)}\mathbbm{1}\mod\bar{\mathcal{I}}.
  \]
  Then as in the proof of Corollary \ref{mean value}, we get $\mu(c_k)\ge b_k^{(2)}$ for any $G$-invariant mean $\mu\colon\ell^\infty(G)\to\R$. Hence by Proposition \ref{l(G)}, $c_k(g)\ne 0$ for infinitely many $g\in G$.
\end{proof}

Novikov and Shubin \cite{NS} proved Morse inequalities for a Morse function on $M/G$ and the $L^2$-Betti numbers of $M$. We recover this result when $G$ is amenable. For a Morse function $\bar{f}\colon M/G\to\R$, let $\bar{c}_k$ denote the number of critical points of $\bar{f}$ having index $k$.

\begin{corollary}
  \label{Novikov-Shubin}
  If $G$ is amenable and $\bar{f}\colon M/G\to\R$ is a Morse function, then we have
  \[
    \bar{c}_k-\bar{c}_{k-1}+\cdots+(-1)^k\bar{c}_0\ge b_k^{(2)}-b_{k-1}^{(2)}+\cdots+(-1)^kb_0^{(2)}
  \]
  for $k=0,1,\ldots,n-1$, and
  \[
    \bar{c}_n-\bar{c}_{n-1}+\cdots+(-1)^n\bar{c}_0=(-1)^n\chi(M/G).
  \]
\end{corollary}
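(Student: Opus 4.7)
The plan is to reduce to Corollary \ref{mean value} by lifting $\bar f$ to a $G$-invariant Morse function on $M$, and then observing that the counting functions $c_k$ become constant multiples of $\mathbbm 1$.

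First, I would let $\pi\colon M\to M/G$ denote the projection and set $f=\bar f\circ\pi\colon M\to\R$. Since $\pi$ is a local diffeomorphism, $f$ is a Morse function whose critical points are precisely the $\pi$-preimages of $\Crit(\bar f)$, with matching index. Because $f$ is $G$-invariant and $M/G$ is compact, every covariant derivative $\nabla^i f$ descends to a bounded section on $M/G$, so $f$ is bounded in the sense of the definition. I would then verify that $f$ is strongly uniform: $\Crit(\bar f)$ is finite, so its preimage in $M$ is a $G$-orbit of finitely many points, giving a uniform separation $2\epsilon$ between distinct critical points (condition (1) of strong uniformness); the Hessians at critical points of $f$ are $G$-translates of finitely many nondegenerate quadratic forms on $M/G$, yielding the uniform lower bound $\delta_1$ on $|\lambda|$; and $|\nabla f|\ge\delta_2$ on $M\setminus N(\Crit(f);\epsilon_0)$ because the corresponding inequality holds on the compact quotient $M/G\setminus N(\Crit(\bar f);\epsilon_0)$. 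For condition (3) of Definition \ref{strong uniformness}, I would choose the $G$-invariant triangulation $L$ of $M$ to be the lift of a sufficiently fine triangulation of $M/G$, arranged so that each critical point of $\bar f$ lies in the interior of an open $n$-simplex at distance more than $\epsilon$ from the $(n-1)$-skeleton; this is possible by a standard barycentric refinement and a small perturbation argument.

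Next, I would compute the counting functions $c_k$ explicitly. By the construction of a fundamental domain $K$ in Section \ref{Piecewise trace}, $K$ contains exactly one lift of each open simplex of $M/G$. Since each critical point $\bar p\in\Crit_k(\bar f)$ lies in the interior of a unique open simplex of $M/G$, it has a unique lift in $K$, and hence
\[
  |\Crit_k(f)\cap K|=|\Crit_k(\bar f)|=\bar c_k.
\]
The $G$-equivariance of the tiling \eqref{tiling} then gives $|\Crit_k(f)\cap gK|=\bar c_k$ for every $g\in G$, so $c_k=\bar c_k\mathbbm 1$ as elements of $\ell^\infty(G)$.

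Finally, I would fix any $G$-invariant mean $\mu\colon\ell^\infty(G)\to\R$ (which exists by the amenability hypothesis) and apply Corollary \ref{mean value} to $f$. Since $\mu(\bar c_k\mathbbm 1)=\bar c_k\mu(\mathbbm 1)=\bar c_k$, the inequalities of Corollary \ref{mean value} become exactly the claimed Morse inequalities for $\bar f$, and its final equality yields $\bar c_n-\bar c_{n-1}+\cdots+(-1)^n\bar c_0=(-1)^n\chi(M/G)$. The only nontrivial step is the triangulation adjustment required for strong uniformness; everything else is formal once the identification $c_k=\bar c_k\mathbbm 1$ is in hand.
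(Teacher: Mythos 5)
Your proposal is correct and follows essentially the same route as the paper: lift $\bar f$ to the $G$-invariant Morse function $f=\bar f\circ\pi$, observe that $f$ is bounded and (after adjusting the $G$-invariant triangulation so that condition (3) of Definition \ref{strong uniformness} holds) strongly uniform, compute $c_k=\bar c_k\mathbbm 1$ from $G$-invariance of $f$ and the tiling \eqref{tiling}, and apply Corollary \ref{mean value}. Your write-up is more detailed than the paper's terse one-line verification of strong uniformness and is arguably cleaner in attributing the needed adjustment to the choice of triangulation $L$ rather than to a ``deformation of the fundamental domain,'' but the underlying argument is the same.
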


\begin{proof}
  Let $f\colon M\to\R$ be the lift of $\bar{f}$. Then as $M/G$ is compact, $f$ is a bounded Morse function. Moreover, by deforming a fundamental domain $K$ slightly if necessary, we may assume that $f$ is strongly uniform. Since $f$ is $G$-invariant, we have $c_k=\bar{c}_k\mathbbm{1}$ for $k=1,2,\ldots,n$. Then the statement follows from Corollary \ref{mean value}.
\end{proof}


\section{Questions}\label{Questions}

In this section, we pose questions concerning the conditions on a Morse function. boundedness and uniformness of a Morse function is irrelevant to the $G$-action on $M$, and the strong uniformness of a Morse functions also irrelevant to the $G$-action on $M$ except for the third condition in Definition \ref{strong uniformness}. However, the relation of the third condition in Definition \ref{strong uniformness} and the $G$-action on $M$ is insignificant, and the authors believe that it is possible to drop this condition to prove Theorem \ref{main}. Then we pose:

\begin{question}
  Is it possible to prove Theorem \ref{main} without the third condition in Definition \ref{strong uniformness}?
\end{question}

As mentioned in Section \ref{Introduction}, the boundedness of the derivative $\nabla f$ of a Morse functioin $f\colon M\to\R$ is an essential assumption, and the boundedness of $\nabla^if$ for $i\ge 2$ and $f$ itself are assumed by technical reasons. Here, we consider the boundedness of $f$ itself. It is used to apply Theorem \ref{Betti number invariance} to the $F=e^{-tf}$ case. However, the normalized Betti number $b_k(d_t)$ for the Witten defrmation $d_t=e^{-tf}de^{tf}$ is defined in terms of the derivatives of $f$, so its properties may not depend on the boundedness of $f$.  Then we ask:

\begin{question}
  Does Theorem \ref{main} hold for an unbounded Morse function?
\end{question}

Corollary \ref{betti critical points} has its own interest, and so we also ask:

\begin{question}
  Does Corollary \ref{betti critical points} hold for an unbounded Morse functioin?
\end{question}

Observe that Corollary \ref{betti critical points} makes sense for any infinite group $G$, though Theorem \ref{main} makes sense only for an amenable group $G$ as in Proposition \ref{amenability}. Then we can further ask whether or not Corollary \ref{betti critical points} holds for an unbounded Morse function and a nonamenable group. However, we can get a negative answer to this further question as follows. Suppose that $M/G$ has nonpositive sectional curvature, and that $M$ is simply-connected. In this case, the group $G$ is nonamenable. Fix a basepoint $p$ of $M$. Then by the Cartan-Hadamard theorem, for each point $x\in M$, there is a unique geodesic $\gamma_x\colon[0,t_x]\to M$ from $p$ to $x$, where $t_x=d(p,x)$. Observe that for any small $\epsilon>0$, $|\frac{d\gamma_x}{dt}|=1$ on $[\epsilon,t_x]$. Then a function $f\colon M\to\R$ which is a small perturbation of a function
\[
  M\to\R,\quad x\mapsto d(p,x)
\]
has finitely many nondegenerate critical points near $p$. Namely, $f$ is a Morse function on $M$ with finitely many critical points. Thus if $G$ is infinite, e.g. $M/G$ is a surface of genus $\ge 2$, this example is a negative answer to the above further question.




\begin{thebibliography}{99}
  \bibitem{A} M. Atiyah, Elliptic operators, discrete groups and von Neumann algebras, Ast\'erisque \textbf{32} (1976), 43-72.

  \bibitem{BW} J. Block and S. Weinberger, Aperiodic tilings, positive scalar curvature, and amenability of spaces, J. Amer. Math. Soc. \textbf{5} (1992), no. 4, 907-918.

  \bibitem{BNW} J. Brodzki, G.A. Niblo, and N. Wright, Pairings, duality, amenability and bounded cohomology, J. Eur. Math. Soc. \textbf{14} (2012), 1513-1518.

  \bibitem{DY} X. Dai and J. Yan, Witten deformation for noncompact manifolds with bounded geometry, J. Inst. Math. Jussieu \textbf{22} (2023), no. 2, 643-680.

  \bibitem{DS} N. Dunford and J.T. Schwartz, Linear Operators, Part I: General Theory, Wiley, New York, 1958.

  \bibitem{G} M. Gromov, Asymptotic invariants of infinite groups, in Geometric group theory, vol. 2 (Sussex, 1991), 1-295, London Math. Soc. Lecture Note Ser. \textbf{182}, Cambridge Univ. Press, Cambridge.

  \bibitem{GL} M. Gromov and H.B. Lawson, Jr., Positive scalar curvature and the Dirac operator on complete Riemannian manifolds, Publ. Math. I.H.E.S. \textbf{58} (1983), 83-196.

  \bibitem{H} M.W. Hirsch, On imbedding differentiable manifolds in euclidean space, Ann. of Math. (2) \textbf{73} (1961), 566-571.

  \bibitem{KKT1} T. Kato, D. Kishimoto, and M. Tsutaya, Homotopy type of the space of finite propagation unitary operators on $\Z$, Homology Homotopy Appl. \textbf{25} (2023), no. 1, 375-400.

  \bibitem{KKT2} T. Kato, D. Kishimoto, and M. Tsutaya, Homotopy type of the unitary group of the uniform Roe algebra on $\Z^n$, J. Topol. Anal. \textbf{15} (2023), no. 2, 495-512.

  \bibitem{KKT3} T. Kato, D. Kishimoto, and M. Tsutaya, Hilbert bundles with ends, J. Topol. Anal. \textbf{16} (2024), no. 2, 291-322.

  \bibitem{KKT} T. Kato, D. Kishimoto, and M. Tsutaya, Vector fields on non-compact manifolds, accepted by Algebr. Geom. Topol.

  \bibitem{L}W. L\"{u}ck, $L^2$-invariants: Theory and Applications to Geometry and $K$-theory, Ergebnisse der Mathematik und ihrer Grenzgebiete, 3. Folge Bd. \textbf{44}, Springer-Verlag, Berlin, 2002.


  \bibitem{M} F. Milizia, Bounded differential forms and coinvariants of bounded functions, arVix:2311.07731.

  \bibitem{Mi} J. Milnor, A note on curvature and fundamental group, J. Differential Geometry \textbf{2} (1968) 1-7.

  \bibitem{NS} S.P. Novikov and M.A. Shubin, Morse inequalities and von Neumann $\mathrm{II}_1$-factors, Doklady Akad. Nauk SSSR \textbf{289} (1986), 289-292.

  \bibitem{P} P. Pansu, Cohomologie $L^p$: invariance sous quasiisom\'etries, preprint, 1995.

  \bibitem{R1} J. Roe, An index theorem on open manifolds. I, J. Differential Geom. \textbf{27} (1988), no. 1, 87-113.

  \bibitem{R2} J. Roe, An index theorem on open manifolds. II, J. Differential Geom. \textbf{27} (1988), no. 1, 115-136.

  \bibitem{R3} J. Roe, On the quasi-isometry invariance of $L^2$ Betti numnbers, Duke Math. J. \textbf{59} (1989), no. 3, 765-783.

  \bibitem{R4} J. Roe, Elliptic operators, Topology and Asymptotic Methods, Pitman Res. Notes Math. Ser. \textbf{179}, Longman Scientific \& Technical, Harlow; copublished in the United States with John Wiley \& Sons, Inc., New York, 1988.

  \bibitem{Ru} W. Rudin, Real and Complex Analysis, McGraw-Hill, 1966.
\end{thebibliography}
\end{document}